\documentclass[a4paper,12pt,reqno]{article}
\usepackage[english]{babel}
\usepackage{amsmath,amsfonts,amssymb,amsthm,bbm}
\usepackage{graphics,epsfig,psfrag} %%add this and next lines if pictures should be in esp format
\usepackage{paralist,subfigure,multirow,float}
\usepackage{hyperref} 
\usepackage[active]{srcltx} %%allows you to jump from your editor directly into the associated position of the DVI file
\usepackage[dvipsnames]{xcolor}
\usepackage{soul} %%allows you to use the \st{...} command, which produces a raised line over the text. If \setstcolor{red} is used, the line is red
\usepackage[latin1]{inputenc}
\usepackage{mathrsfs}
\usepackage[OT1]{fontenc}

\newtheorem{theorem}{Theorem}[section]
\newtheorem{corollary}[theorem]{Corollary}
\newtheorem{lemma}[theorem]{Lemma}
\newtheorem{proposition}[theorem]{Proposition}
\newtheorem{remark}[theorem]{Remark}

\newtheorem{hypothesis}[theorem]{Hypothesis}
\newtheorem{definition}[theorem]{Definition}

\theoremstyle{definition}

 \usepackage{autonum} %% displays only cited formulas;

\DeclareMathOperator{\dv}{div}

\def\N{\mathbb{N}}
\def\Z{\mathbb{Z}}

\def\R{\mathbb{R}}

\let\e=\varepsilon
\def\epsilon{\varepsilon}

\let\t=\tilde
\let\ol=\overline
\let\ul=\underline
\let\.=\cdot
\let\0=\emptyset

\let\mc=\mathcal

\def\1{\mathbbm{1}}
\def\Sph{{\mathbb{S}}^{N-1}}
\newcommand{\su}[2]{\genfrac{}{}{0pt}{}{#1}{#2}}

\def\hat{\widehat}
\def\tilde{\widetilde}

\def\seq#1{(#1_n)_{n\in\N}}

\newenvironment{formula}[1]{\begin{equation}\label{#1}}{\end{equation}\noindent}

\numberwithin{equation}{section}% equation numbering based on section
\newcommand{\be}{\begin{equation}}
\newcommand{\ee}{\end{equation}}
\newcommand{\baa}{\begin{array}}
\newcommand{\eaa}{\end{array}}
\newcommand{\ba}{\begin{eqnarray}}
\newcommand{\ea}{\end{eqnarray}}
\def\Fi#1{\begin{formula}{#1}}
\def\Ff{\end{formula}\noindent}
\def\BS{\color{Bittersweet}}

\def\PTF{pulsating traveling front}

\def\W{\mc{W}}

\begin{document}
\date{}
\title{\bf{Reaction-diffusion equations in periodic media: spreading speeds and spreading sets\thanks{This work has received funding from Excellence Initiative of Aix-Marseille Universit\'e~-~A*MIDEX, a French ``Investissements d'Avenir'' programme, from the French ANR ReaCh (ANR-23-CE40-0023) project, and from the European Union -- Next Generation EU, PRIN project 2022W58BJ5 ``PDEs and optimal control methods in mean field games, population dynamics and multi-agent models''. The second author is grateful to the hospitality of Universit\`a degli Studi di Roma La Sapienza, where part of this work was done. The first author is supported by the fundamental research funds for the central universities and the National Natural Science Foundation of China (No. 12471201).}}}
\author{Hongjun Guo$^{\hbox{\small{ a}}}$, Fran\c cois Hamel$^{\hbox{\small{ b}}}$ and Luca Rossi$^{\hbox{\small{ c}}}$\\
\\
\footnotesize{$^{\hbox{a }}$School of Mathematical Sciences, Key Laboratory of Intelligent Computing and Applications }\\
\footnotesize{(Ministry of Education), Institute for Advanced Study, Tongji University, Shanghai, China}\\
\footnotesize{$^{\hbox{b }}$Aix Marseille Univ, CNRS, I2M, Marseille, France}\\
\footnotesize{$^{\hbox{c }}$SAPIENZA Univ Roma, Istituto ``G.~Castelnuovo'', Roma, Italy}}
\maketitle
	
\begin{abstract}
\noindent{We consider solutions of space-periodic reaction-diffusion-advection equations in the whole space $\R^N$, with general unbounded initial support. We show the existence of spreading sets for the solutions at large time, and variational formulas for the spreading speeds. The location of the upper-level sets of a solution with unbounded initial support is estimated in terms of that initial support and the spreading set for the solutions with bounded initial supports. The results hold for the standard classes of Fisher-KPP, ignition, or bistable reactions. The general results apply to the cases of $V$-shaped and $\Lambda$-shaped initial supports. The proofs rely in particular on upper estimates of what we call retracting solutions.}
\vskip 4pt
\noindent{\small{\it{Keywords}}: Reaction-diffusion equations; pulsating traveling fronts; large-time dynamics; spreading speeds.}
\vskip 4pt
\noindent{\small{\it{Mathematics Subject Classification}}: 35B06; 35B30; 35B40; 35C07; 35K57.}
\end{abstract}
	
\tableofcontents

%-------------------------------------------------------------------------------
%-------------------------------------------------------------------------------

\section{Introduction and main results}\label{intro}

In this paper, we investigate the large-time dynamics of solutions of the following reaction-diffusion equation in periodic media
\begin{eqnarray}\label{main}
\partial_t u=\dv(A(x)\nabla u) +q(x)\cdot\nabla u + f(x,u),  \quad t>0,\ x\in\R^N,
\end{eqnarray}
where $\nabla$ denotes the gradient with respect to the $x$ variable, 
``$\ \cdot\ $" is the Euclidean scalar product in $\R^N$, and $N\ge1$ is an integer. 
By periodicity, we mean that $A(x)$, $q(x)$ and $f(x,\cdot)$ are $\Z^N$-periodic in $x$, that is, 
$$\forall\,h\in\Z^N,\qquad A(\.+h)\equiv A,\qquad q(\.+h)\equiv q,\qquad f(\.+h,\.)\equiv f.$$
The matrix field $x\mapsto A(x)=(a_{ij}(x))_{1\le i,j \le N}$ is symmetric positive definite, of class~$C^{1,\alpha}(\R^N)$ for some $\alpha\in (0,1)$ and the vector field $x\mapsto q(x)=(q_i(x))_{1\le i\le N}$ is of class~$C^{1}(\R^N)$ and satisfies
\be\label{hypq}
\hbox{div}\, q=0 \hbox{ in $\R^N$},\qquad \int_{(0,1)^N}\!q(x)dx=0.
\ee
The function $(x,s)\mapsto f(x,s)$ is of class $C^{0,\alpha}(\R^N\times [0,1])$, 
and~$\partial_s f=\frac{\partial f}{\partial s}$ exists and is of class $C^{0,\alpha}(\R^N\times [0,1])$.
We further assume that
\be\label{f01}
f(x,0)=f(x,1)=0\ \hbox{ for all } x\in\R^N.
\ee
For mathematical convenience, we extend $f$ by $0$ in $\R^N\times (\R\setminus [0,1])$. The extended function is then globally Lipschitz continuous with respect to $s\in\R$ uniformly in $x\in\R^N$. All the above assumptions will always be understood throughout the paper.

The initial datum $u_0$ for~\eqref{main} is assumed to be in the form $u_0=\1_U$, where $\1_U$ stands for the indicator function of a measurable set $U$, i.e.
\begin{eqnarray}\label{initial}
u_0(x)=\left\{\begin{array}{ll}
1 &\hbox{if $x\in U$},\vspace{3pt}\\
0 &\hbox{if $x\in\R^N\setminus U$}.
\end{array}
\right.
\end{eqnarray}
The set $U$ is called support of the function $u_0$, with a slight abuse of notation. The initial condition for~\eqref{main} is understood in the sense that $u(t,\cdot)\rightarrow u_0$ as $t\rightarrow 0^+$ in~$L_{loc}^1(\R^N)$ and the solution $u$ is the unique bounded classical solution of~\eqref{main} with initial condition~$u_0$. 
It follows from the parabolic maximum principle that
$$0\le u(t,x)\le 1\ \hbox{ for all $t>0$ and $x\in\R^N$},$$
with strict inequalities if the Lebesgue measures of $U$ and $\R^N\setminus U$ are positive, by the strong maximum principle. Nevertheless, the parabolic estimates tell us that $u$ stays close to $1$ or $0$ in subregions of $U$ or $\R^N\setminus U$ which are far away from $\partial U$, at any fixed time~$t$.
We are interested in the dynamics of these regions as time $t$ varies. More precisely, given some assumptions which guarantee the advantage of the steady state~$1$ over the steady state~$0$, our goal is to understand how the state~$1$ invades the state~$0$. To address this question, a fundamental aspect is the notion of directional spreading speed, defined as follows.

\begin{definition}\label{Def-Speed}
For a solution $u$ of \eqref{main}-\eqref{initial} and for a given unit vector $\xi \in\mathbb{S}^{N-1}$ $($the unit Euclidean sphere of $\R^N$$)$, a quantity $\omega(\xi)\in [0,+\infty]$ is called spreading speed $($in the direction~$\xi$$)$ if
\ba\label{conv:ss}
\left\{\begin{array}{lll}
u(t,ct\xi)\rightarrow 1 &\hbox{as $t\rightarrow +\infty$, $\ $for every $0\le c<\omega(\xi)$},\vspace{3pt}\\
u(t,ct \xi)\rightarrow 0 &\hbox{as $t\rightarrow +\infty$, $\ $for every $c>\omega(\xi)$}.
\end{array}
\right.
\ea
\end{definition}

The spreading speed $\omega(\xi)$, if any, somehow describes the asymptotic speed of the invasion front of the state $0$ by the state $1$ in the direction $\xi$. However, it is not enough to illustrate the global leading invasion front. Namely, there is no reference in Definition~\ref{Def-Speed} to the uniformity of the limits~\eqref{conv:ss} with respect to the directions $\xi\in\mathbb{S}^{N-1}$. Therefore, the following more general notion of spreading set will be used.

\begin{definition}\label{def:W}
We say that a solution $u$ to~\eqref{main} admits a spreading set
 $\W\subset\R^N$ if~$\W$ coincides  with the interior of its closure and satisfies 
\be\label{subset}
\lim_{t\to+\infty}\,\Big(\min_{x\in C}u(t, tx)\Big)=1\, \text{ for every non-empty compact set }C\subset \mc{W},
\ee
and
\be\label{superset}
\lim_{t\to+\infty}\,\Big(\max_{x\in C}u(t,tx)\Big)=0\, \text{ for every non-empty compact set }C\subset \hbox{\rm int}(\R^N\!\setminus\!\mc{W}).
\ee
If only \eqref{subset} $($resp. \eqref{superset}$)$ holds, we say that $\mc{W}$ is a spreading subset $($resp. superset$)$.
\end{definition}

The definition of the spreading set, if any, guarantees its uniqueness. Indeed, if~$\W_1$ and~$\W_2$ are spreading sets and if $x\in\W_1$, then $x\not\in{\rm int}(\R^N\!\setminus\!\W_2)$ by~\eqref{subset}-\eqref{superset}, that is,~$x\in\overline{\W_2}$. This implies $\W_1\subset\overline{\W_2}$ and then $\W_1={\rm int}(\W_1)\subset{\rm int}(\overline{\W_2})=\W_2$, and vice versa $\W_2\subset\W_1$. Furthermore, if $u$ has a spreading set $\W=\{r\xi:\xi\!\in\!\mathbb{S}^{N-1},\, 0\!\le\!r<\!\omega'(\xi)\}$ for some continuous map $\omega': \xi\mapsto \omega'(\xi)\in (0,+\infty]$, then $\omega'(\xi)$ is the spreading speed in the direction $\xi$, for each $\xi\in \mathbb{S}^{N-1}$. 

From Definition~\ref{def:W}, the boundary $\partial\W$ of the spreading set $\W$ dilated by the time, can be seen as the global leading front of the invasion of $0$ by~$1$. Namely, if a solution~$u$ admits a spreading set $\W$, and if its initial datum is compactly supported, then for any $\lambda\in(0,1)$, it holds that
$$\frac1t\,\big\{x\in\R^N:u(t,x)>\lambda\big\}\mathop{\longrightarrow}_{t\to+\infty}\W$$
in the sense of the Hausdorff distance, see \cite[Proposition~2.7]{GHR1}. If the initial datum is not compactly supported, then in general the convergence only occurs locally in space, cf.~\cite{HR1}.
 
The proof of the existence of $\W$, the characterization of $\partial\W$, and its relationship with the initial support set $U$, are some of the main goals of the paper. Before stating the main results, we first need to list the main hypotheses, related to various types of reaction terms~$f$.

%%%%%%%%%%%%%%%%%%%%%%%%%%%%%%%%%%%%%%%%%%%%%%%%%%%

\subsection{Main hypotheses}

By tracing back to the well-known result of Aronson and Weinberger~\cite{AW}, the spreading speed for the homogeneous reaction-diffusion
\be\label{HRD}
\partial_tu=\Delta u +f(u),  \quad t>0,\, x\in\R^N,
\ee
with compactly supported initial datum $0\le u_0\le 1$ such that $\|u_0\|_{L^{\infty}(\R^N)}>0$ and the monostable assumption $f(0)=f(1)=0$, $f(u)>0$ in $(0,1)$, can be characterized by the minimal speed $c^*$ of planar (or one-dimensional) fronts. Here, the minimal speed $c^*>0$ means that \eqref{HRD} in $\R$ admits traveling fronts $\phi(x -ct)$ such that $\phi(+\infty)=0<\phi<1=\phi(-\infty)$ if and only if $c\ge c^*$. More precisely, provided  $u(t,x)\rightarrow 1$ as $t\rightarrow +\infty$ locally uniformly in $\R^N$, which automatically holds if $f'(0)>0$ and more generally if $\liminf_{s\rightarrow 0^+} f(s)/s^{1+\frac{2}{N}}>0$~\cite{AW}, one has
\begin{eqnarray*}
\left\{\begin{array}{ll}
\displaystyle\lim_{t\rightarrow +\infty}\Big(\inf_{x\in B_{ct}}u(t,x)\Big)= 1  & \hbox{for every $0\le c<c^*$},\vspace{3pt}\\
 \displaystyle\lim_{t\rightarrow +\infty}\Big(\sup_{x\in \R^N\setminus B_{ct}}u(t,x)\Big)= 0 & \hbox{for every $c>c^*$},
\end{array}
\right.
\end{eqnarray*}
where $B_r$ stands for the open Euclidean ball with center $0$ and radius $r>0$.

For the spatially periodic equation~\eqref{main}, the problem is more intricate, even for compactly supported non-trivial initial data. Through probabilistic techniques, Freidlin and G\"{a}rtner \cite{FG} extended the result of~\cite{AW} to the periodic case, under the Fisher-KPP condition: $0< f(x,s)/s\le f_s(x,0)=\frac{\partial f}{\partial s}(x,0)$ for all~$(x,s)\in\R^N\times (0,1)$. Combined with~\cite{BHN,BHN1,W}, their result says that the spreading speed $\omega_0(\xi)$ in any direction $\xi$ exists, is independent of the compactly supported initial data, and can be characterized by
$$\omega_0(\xi)=\inf_{\su{e\in \mathbb{S}^{N-1}}{\xi\.e>0}}\frac{c^*(e)}{\xi\cdot e},$$
where the subscript $0$ in $\omega_0(\xi)$ refers to the case of compactly supported initial conditions, and where $c^*(e)$ is the minimal speed of pulsating traveling fronts in the direction $e$ (in the sense of Definition~\ref{def:pf} below). Later on, Rossi~\cite{R1} presented a PDE approach of the Freidlin-G\"{a}rtner formula for the spreading speed in a more general framework.

The above two examples show that the spreading speeds are closely related to the speeds of traveling fronts. These fronts, called pulsating traveling fronts in the periodic case, describe the invasion of the steady state $0$ by the steady state $1$ in a direction $e$:

\begin{definition}\label{def:pf}
A \PTF\ connecting~$1$ to~$0$ in the direction $e$ with a speed $c^*(e)$ is an entire $($defined for all $t\in\R$$)$ classical solution $\phi_e:\R\times\R^N\to(0,1)$ of~\eqref{main} of the type
$$\phi_e(t,x)=U_e(x,x\cdot e-c^*(e)t),$$
where the function $(x,z)\in\R^N\times\R \mapsto U_e(x,z)$ is periodic in the $x$-variable and satisfies
$$\lim_{z\to-\infty}U_e(x,z)=1,\ \ \lim_{z\to+\infty}U_e(x,z)=0,\ \hbox{uniformly with respect to $x\in\R^N$}.$$
\end{definition}

In this paper, we mainly consider three types of $f$, always assuming~\eqref{f01}. The first type means that the steady state $0$ is unstable and $1$ is stable. We also assume the KPP condition for the first type, as in the Freidlin-G{\"a}rtner paper~\cite{FG}:

\begin{hypothesis}\label{hyp:KPP}
For every $(x,s)\in\R^N\times(0,1)$, $0<f(x,s)\le f_s(x,0) s$.
\end{hypothesis}

The second type is the generalized ignition case: 

\begin{hypothesis}\label{hyp:ignition}
There exist $0<\sigma<\varsigma<1$ such that $f=0$ in $\R^N\times [0,\sigma]$, $f\ge0$ in~$\R^N\times[\sigma,1]$, $\max_{\R^N} f(\cdot,s)>0$ for every $s\in (\sigma,1)$, $f>0$ in $\R^N\times(\varsigma,1)$, and $f(x,\cdot)$ is non-increasing in $[\varsigma,1]$ for every $x\in\R^N$.
\end{hypothesis}

The third type means that the steady states $0$ and $1$ are weakly stable and there exists a pulsating front connecting  $1$ to $0$ with a positive speed in each direction:

\begin{hypothesis}\label{hyp:bistable}
There exists $\sigma\in(0,1/2)$ such that $f(x,\cdot)$ is non-increasing in $[0, \sigma]$ and $[1-\sigma, 1]$ for every $x\in\R^N$, and $\min_{\R^N} f(\cdot,s)< 0$ for every $s\in(0,\sigma]$. Moreover, for every $e\in\mathbb{S}^{N-1}$, there is a pulsating front $\phi_e(t,x)=U_e(x,x\cdot e-c^*(e)t)$ connecting~$1$ to~$0$ with $c^*(e)>0$.
\end{hypothesis}

By~\cite{BH1,BHN, W}, Hypothesis~\ref{hyp:KPP} ensures that, for every $e\in\mathbb{S}^{N-1}$, there is a minimal speed $c^*(e)>0$  such that~\eqref{main} admits pulsating fronts $U_e(x,x\cdot e-c t)$ connecting $1$ to $0$ if and only if $c\ge c^*(e)$. Moreover, $c^*(e)$ can be expressed in terms of the periodic principal eigenvalues of some associated linear operators, from which one deduces that $c^*(e)$ is continuous with respect to $e\in \mathbb{S}^{N-1}$~\cite{BHN1}. By \cite{BH1,X1}, Hypothesis~\ref{hyp:ignition} guarantees that, for every $e\in\mathbb{S}^{N-1}$, there exist a unique speed $c^*(e)>0$ and a unique (up to translation of $z$) profile $U_e(x,z)$. From~\cite[Proposition~2.2]{GHR1}, if Hypothesis~\ref{hyp:ignition} or~\ref{hyp:bistable} holds, then $c^*(e)$ is unique, $U_e$ is unique up to shift in $z$ and the map $e\mapsto c^*(e)$ is continuous. For the homogeneous case~\eqref{HRD} with $f$ of bistable type:
$$\exists\,\beta\in(0,1),\ \ f<0\hbox{ in }(0,\beta)\hbox{ and }f>0\hbox{ in }(\beta,1),$$
with $\int_0^1 f>0$, then~\eqref{HRD} admits planar fronts connecting $1$ to $0$ with positive and unique speed $c$, namely solutions $u(t,x)=\psi(x\cdot e-ct)$, with $e\in\mathbb{S}^{N-1}$ and $\psi: \R\to (0,1)$ such that $\psi(-\infty)=1$ and $\psi(+\infty)=0$, see \cite{AW,FM}. Such fronts are pulsating traveling fronts too, with the same speed $c$. Furthermore, if $f$ is non-increasing in a right neighborhood of~$0$ and in a left neighborhood of $1$ (for instance if $f'(0)<0$ and  $f'(1)<0$), then Hypothesis~\ref{hyp:bistable} holds, and any pulsating traveling front in the direction $e$ is a planar front by~\cite[Theorem~3.1]{BH2}. Planar traveling fronts can also exist for multistable nonlinearities~$f$, see~\cite{DLL,FM}. For the periodic equation~\eqref{main}, if $f$ is of the strong bistable type, namely if
$$\partial_sf(x,0)<0\hbox{ and $\partial_sf(x,1)<0$ for all $x\in\R^N$},$$
then Hypothesis~\ref{hyp:bistable} is known to hold in dimension~$1$~\cite{DHZ2,FZ,Z2} or in higher dimensions under various additional assumptions on $A$, $q$ and $f$, for instance in highly or slowly oscillating media~\cite{DLL,DS,X4} or when there does not exist any stable periodic steady state between $0$ and $1$~\cite{DG,D2,GR} (see also~\cite{HPS,PX,VV,X2,X3,XZ} for further references in the case of almost-homogeneous coefficients).

Lastly, any of Hypotheses~\ref{hyp:KPP}-\ref{hyp:bistable} guarantees that the invasion of $0$ by $1$ happens for every solution with a not-too-small initial datum $u_0$ such that $0\le u_0\le 1$, see \cite[Theorem 3 and Corollary 1]{DR} and \cite[Proposition~2.3]{GHR1}, in the following sense:

\begin{proposition}[\cite{DR,GHR1}]\label{pro:rho}
Assume that one of Hypotheses~$\ref{hyp:KPP}$-$\ref{hyp:bistable}$ holds. Then there exist $\theta\in (0,1)$ and $\rho>0$ such that  for any $x_0\in\R^N$, if 
$$\theta\,\1_{B_{\rho}(x_0)}\le u_0\le 1\ \hbox{ in $\R^N$},$$
where $B_{\rho}(x_0)$ denotes the open Euclidean ball of center $x_0$ and radius $\rho$, then the solution $u(t,x)$ of \eqref{main} with initial condition $u_0$ satisfies $u(t,x)\rightarrow 1$ as $t\rightarrow +\infty$ locally uniformly with respect to $x\in\R^N$. Such a solution is then said to be {\rm invading}.
\end{proposition}

Furthermore, it follows from \cite{FG} that, if $q=0$ and Hypothesis~\ref{hyp:KPP} holds, then $\theta\in(0,1)$ and $\rho>0$ can be arbitrary. Under any one of Hypotheses~\ref{hyp:KPP}-\ref{hyp:bistable}, one knows from~\cite{R1} that the spreading speeds and the spreading set of invading solutions with {\it compactly supported} initial data exist and are given by
\begin{equation}\label{w0}
\omega_0(\xi)=\min_{\su{e\in \mathbb{S}^{N-1}}{e\.\xi>0}}\frac{c^*(e)}{e\.\xi}\;\in (0,+\infty),
\end{equation}
and 
\begin{equation}\label{W0}
\W_0=\Big\{r\xi\,:\,\xi\in \mathbb{S}^{N-1},\ \ 0\leq r<\omega_0(\xi) \Big\},
\end{equation}
respectively.\footnote{It is assumed in addition in \cite{R1} that $s\mapsto f(x,s)$ is nonincreasing in a left neighborhood of $1$. Such an assumption is not supposed in the present paper in the KPP case (Hypothesis~\ref{hyp:KPP}). However, one can get rid of it by considering two KPP nonlinearities $\ul f,\ol f$ satisfying $\ul f\leq f\leq \ol f$ and $\partial_s\ul f(\.,0)\equiv\partial_s f(\.,0)\equiv\partial_s \ol f(\.,0)$ for which this additional assumption holds, and then deduce the result for $f$ by comparison, since the minimal speeds $c^*(e)$ only depend on the reaction term through its linearization at $0$, according to \cite{BH1,BHN1,W}.} Clearly, since $c^*(e)>0$, \eqref{W0} is equivalent to the following formulation
\be\label{W0-wulff}
\W_0=\bigcap_{e\in\Sph}\!\!\mc{H}_e,\ \ \hbox{with }\mc{H}_e:=\big\{x\in\R^N:x\cdot e<c^*(e)\big\},
\ee
which is the expression of the Wulff shape arising in crystallography. One deduces from this expression that $\W_0$ is convex. One also sees from~\eqref{w0} and the positivity and continuity of $c^*(e)$ with respect to $e$, that the map $\xi\mapsto \omega_0(\xi)$ is continuous in $\mathbb{S}^{N-1}$.

%%%%%%%%%%%%%%%%%%%%%%%%%%%%%%%%%%%%%%%%%%%%%%%%%%%

\subsection{Main results}

In the following, $u$ is a solution to the equation \eqref{main} with an initial datum of the form~${u_0=\1_{U}}$, where $U$ is an unbounded Borel subset of $\R^N$ (the results actually hold as well when $U$ is bounded, but they are not new in that case). In the present paper, we both extend the results of~\cite{BHN,BHN1,FG,R1,W} to the case of general unbounded initial supports, as well as those of~\cite{HR1} to the periodic setting. This extension requires the understanding of the interplay between the initial support~$U$ and the heterogeneity of~\eqref{main}. Furthermore, as shall be seen, new unexpected phenomena different from the homogeneous equation~\eqref{HRD} arise for the periodic equation~\eqref{main}. They are related to the possible lack of smoothness of the spreading set $\W_0$ for solutions with compactly supported initial data, which can actually occur, at least in the bistable case, cf.~\cite{DG} and \cite[Corollary~6.3]{GR}. If the set $\W_0$ is differentiable at any boundary point, then,  as seen in Proposition~\ref{pro:suf-conditions} below, for every direction $e\in\Sph$, the hyperplane
$$H_e:=\partial\mc{H}_e=\{x\in\R^N:x\cdot e=c^*(e)\}$$
is a supporting hyperplane for $\W_0$ (see Figure~\ref{fig:Supt-H}),\footnote{An affine hyperplane $H\subset\R^N$ is called a supporting hyperplane of set $A\subset\R^N$ if $A\setminus H$ is connected (that is, $A$ lies on one closed side of $H$) and $H\cap\overline{A}\neq\emptyset$.} which equivalently means that every $e\in\mathbb{S}^{N-1}$ is a minimizer of the Freidlin-G\"{a}rtner formula \eqref{w0}, for some $\xi\in\Sph$, i.e.,
\be\label{supt-plane}
\forall\,e\in\mathbb{S}^{N-1},\, \exists\,\xi\in\mathbb{S}^{N-1},\quad \frac{c^*(e)}{e\.\xi}=\omega_0(\xi).
\ee
We also derive another sufficient condition for \eqref{supt-plane}. Both are given by the following.
\begin{figure}[ht]
\centering
{\includegraphics[width=.55\linewidth]{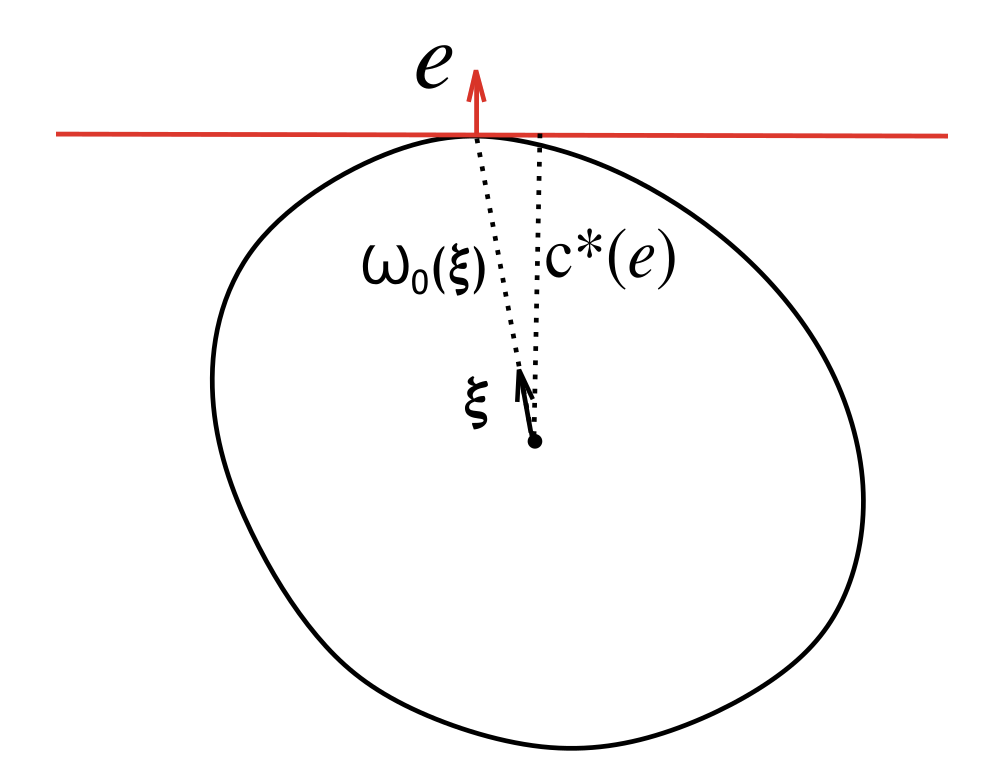}}
\caption{Supporting hyperplane $H_e$ for $\W_0$.}
\label{fig:Supt-H}
\end{figure}

\begin{proposition}\label{pro:suf-conditions}
Assume one of Hypotheses~\ref{hyp:KPP}-\ref{hyp:bistable}. Any of the following conditions ensures~\eqref{supt-plane}:
\begin{itemize}
\item[(i)] the spreading set $\W_0$ defined by~\eqref{w0}-\eqref{W0-wulff} is differentiable at any boundary point;
\item[(ii)] for any $\xi\in\Sph$, there exists a unique $e\in\Sph$ such that $e\cdot\xi>0$ and
$$\frac{c^*(e)}{e\.\xi}=\min_{\su{e'\in\Sph}{e'\.\xi>0}}\frac{c^*(e')}{e'\.\xi}=\omega_0(\xi).$$
\end{itemize}
\end{proposition}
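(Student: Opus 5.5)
The plan is to use the Wulff-shape description~\eqref{W0-wulff} and the duality between $\omega_0$ and $c^*$. I would first record basic facts. Since $c^*$ is continuous and positive on $\Sph$, it is bounded between two positive constants. Hence $\W_0$ is a bounded convex open set containing a ball $B_r$ with $r=\min c^*>0$. By~\eqref{W0}, $\partial\W_0=\{\omega_0(\xi)\xi:\xi\in\Sph\}$, and $\overline{\W_0}=\bigcap_e\overline{\mc H_e}$.

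For any $e$ and $\xi$ with $e\cdot\xi>0$, one has $\omega_0(\xi)\le c^*(e)/(e\cdot\xi)$, so $\W_0\subset\mc H_e$. I would then show that~\eqref{supt-plane} for a given $e$ is equivalent to $H_e\cap\partial\W_0\neq\emptyset$. Indeed, if $x=\omega_0(\xi)\xi$ satisfies $x\cdot e=c^*(e)$, then $e\cdot\xi>0$ and $c^*(e)/(e\cdot\xi)=\omega_0(\xi)$. Conversely, equality in~\eqref{supt-plane} means exactly that the boundary point $\omega_0(\xi)\xi$ lies on $H_e$. So in both cases the goal is to show that each $H_e$ touches $\overline{\W_0}$.

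\emph{Case (i).} Fix $e\in\Sph$ and let $\xi$ maximize $x\cdot e$ over the compact set $\overline{\W_0}$, attained at $x_0=\omega_0(\xi)\xi\in\partial\W_0$. The hyperplane $\{x\cdot e=x_0\cdot e\}$ supports $\W_0$ at $x_0$. By differentiability at $x_0$, the supporting hyperplane there is unique, with outer normal $\nu(x_0)=e$. Next I use the Wulff representation. The point $x_0$ lies on $\partial\W_0$, so there are $e_n$ with $x_0\cdot e_n-c^*(e_n)\to 0$. Otherwise $x_0$ would satisfy $x_0\cdot e'<c^*(e')-\delta$ for all $e'$, uniformly by compactness, and would then be interior. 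Passing to a limit $e_n\to e'$ and using continuity of $c^*$ gives $x_0\cdot e'=c^*(e')$. So $H_{e'}$ is a supporting hyperplane at $x_0$, and uniqueness forces $e'=e$. Therefore $x_0\in H_e$, and $\xi$ realizes~\eqref{supt-plane}.

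\emph{Case (ii).} Define $\Phi(\xi)$ as the unique minimizer $e$ in~\eqref{w0}. The map $\Phi$ is continuous: for $\xi_n\to\xi$, minimizers $e_n$ stay in the region $e\cdot\xi\ge\delta>0$, because $c^*(e_n)/(e_n\cdot\xi_n)\le C$ and $c^*\ge r$. By continuity of $\omega_0$, any limit point of $(e_n)$ is a minimizer for $\xi$, hence equals $\Phi(\xi)$. Also $\Phi(\xi)\cdot\xi>0$, so $\Phi$ is homotopic to the identity on $\Sph$ through $(1-s)\xi+s\Phi(\xi)$, normalized; this never vanishes. Hence $\Phi$ has degree $1$ and is surjective for $N\ge2$, which gives~\eqref{supt-plane}. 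For $N=1$ the claim is immediate, since $\Phi(\xi)\cdot\xi>0$ forces $\Phi(\xi)=\xi$.

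The main obstacle is this surjectivity step in (ii). A cleaner alternative, which avoids degree theory, is to argue geometrically. Uniqueness of the minimizer means that at each boundary point $\omega_0(\xi)\xi$ exactly one of the hyperplanes $H_{e'}$ passes through it. With continuity, this should imply that these are the only supporting hyperplanes there. One then reduces to the argument of (i): the support point in direction $e$ lies on some $H_{e'}$, and one must show $e'=e$. Verifying that no extra supporting normals appear at corners is delicate, so I would rely on the homotopy/degree argument as the primary route.
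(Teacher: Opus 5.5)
Your proof is correct, but it is organized differently from the paper's.

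\textbf{The paper's route.} The paper proves the chain (i) $\Rightarrow$ (ii) $\Rightarrow$ \eqref{supt-plane}.
\begin{itemize}
\item For (i) $\Rightarrow$ (ii) it uses the same supporting-hyperplane idea as you: every minimizer $e$ in \eqref{w0} makes $H_e$ a supporting hyperplane at $\omega_0(\xi)\xi$, so differentiability forces the minimizer to be unique.
\item For (ii) $\Rightarrow$ \eqref{supt-plane} it argues by contradiction with an elementary perturbation. If $H_e$ misses $\overline{\W_0}$, the maximizer $z$ of $x\cdot e$ on $\overline{\W_0}$ lies on some $H_{e'}$ with $e'\neq e$. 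The points $z+\frac1n(e-e')\notin\overline{\W_0}$ then yield active constraints $e_n$ with $e\cdot e_n>e'\cdot e_n$. Their limit $e''\neq e'$ satisfies $z\cdot e''=c^*(e'')$, so it is a second minimizer for the direction $\hat z$.
\end{itemize}

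\textbf{Your case (i).} It skips (ii) entirely. It is a direct one-line version of the supporting-hyperplane argument, applied at the support point in direction $e$ rather than at an arbitrary boundary point. It is cleaner than the paper's route for this case.

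\textbf{Your case (ii).} Here you replace the perturbation argument by topology. The unique-minimizer map $\Phi$ is continuous and satisfies $\Phi(\xi)\cdot\xi>0$. Hence it is homotopic to the identity and has degree $1$. It is therefore onto, since a non-surjective map into $\Sph$ with $N\ge2$ factors through the contractible set $\Sph\setminus\{p\}$. This is short and correct.

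\textbf{What each route buys.} Your degree argument shows something slightly different: any continuous selection of minimizers in \eqref{w0} already gives \eqref{supt-plane}. The price is that it relies on Brouwer degree. The paper's argument, by contrast, is elementary and localized: for a given $e$ it only needs uniqueness of the minimizer at the direction $\hat z$ of a support point. It also exhibits explicitly the non-smooth boundary point with two active normals that must appear whenever \eqref{supt-plane} fails. This is exactly the ``delicate'' geometric alternative you set aside at the end, and the paper shows it is quite tractable.
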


Our main results are derived under the assumption \eqref{supt-plane}. If such assumption fails for some direction $e$, it loosely means that the half-space $\mc{H}_e$  is not relevant in the Wulff shape formulation \eqref{W0-wulff} of the set $\W_0$. This is the reason behind the fact that some results which hold for the homogeneous equation~\eqref{HRD} do not extend to the heterogeneous setting~\eqref{main}, see Remark \ref {remCE1}.
 
In order to state our main results, we introduce some notation.
For given $x\in\R^N$ and $E\subset\R^N$, 
we set $d(x,E):=\inf\{|x-y|:\, y\in E\}$, where $|\cdot|$ is the Euclidean norm, with the convention
$d(x,\emptyset):=+\infty$. 
We then define the notion of positive-distance-interior of a set $U\subset\R^N$, as
$$U_{\rho}:=\Big\{x\in U:\, d(x,\partial U)\ge \rho \Big\}, \hbox{ with $\rho>0$}.$$
Next, for any pair of subsets $E$ and $F$ of $\R^N$, we denote
$$d_{H}(E,F):=\max\Big(\sup_{x\in E}d(x,F),\sup_{y\in F}d(y,E)\Big),$$
the Hausdorff distance between $E$ and $F$, with the convention that it is $+\infty$ if $E$ or $F$ is empty.
Finally, for $\mc{A}\subset\Sph$, we call $\mc{C}(\mc{A})$ the cone generated by $\mc{A}$, defined by
\be\label{defCA}
\mc{C}(\mc{A}):=\big\{ra:\ r\ge0,\ a\in \mc{A}\big\},
\ee
with the convention $\mc{C}(\emptyset):=\{0\}$.

Our first result concerns the long-time description of the upper level sets of the solution.

\begin{theorem}\label{Th:levelset}
Assume one of Hypotheses~\ref{hyp:KPP}-\ref{hyp:bistable}, and that~\eqref{supt-plane} holds. If $U_{\rho}\neq \emptyset$, with $\rho>0$ given in Proposition~\ref{pro:rho}, and moreover
\Fi{UUrho}
d_H(U,U_\rho)<+\infty,
\Ff
then, for any $\lambda\in(0,1)$, there holds
$$\frac1t\;d_H\big(\{x\in\R^N:u(t,x)>\lambda\}\,,\, U+t\W_0\big)
\mathop{\longrightarrow}_{t\to+\infty}0.$$
\end{theorem}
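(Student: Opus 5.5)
Two Hausdorff estimates are needed. The first says points of $U+t\W_0$ are close to the level set $E_\lambda(t):=\{u(t,\cdot)>\lambda\}$. The second, harder one, says points of $E_\lambda(t)$ are within $o(t)$ of $U+t\W_0$.

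\emph{Lower bound (inner estimate).} Let $\e>0$ and take $x=y+tz$ with $y\in U$ and $z\in\W_0$. By \eqref{UUrho} we can replace $y$ by some $y'\in U_\rho$ with $|y-y'|\le d_H(U,U_\rho)=:D$, so that $B_\rho(y')\subset U$ and $u_0\ge\theta\1_{B_\rho(y')}$. By comparison, $u(t,\cdot)\ge v(t,\cdot-y'')$, where $v$ solves \eqref{main} with initial datum $\theta\1_{B_\rho}$ and $y''\in\Z^N$ is within $\sqrt N$ of $y'$. A small ball around a lattice point sits inside $B_\rho(y')$ after slightly shrinking $\rho$; Proposition~\ref{pro:rho} is applied with the lattice-aligned ball, or with a finite family of centres in $[0,1]^N$ and a compactness argument. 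By Proposition~\ref{pro:rho} and \cite{R1}, $v$ is invading with spreading set $\W_0$. Moreover, the convergence $\min_{x\in C}v(t,tx)\to1$ is uniform over the finitely many shifted copies. Taking $C=(1-\e)\overline{\W_0}$, which is compact because $\W_0$ is bounded and star-shaped with respect to $0$, we get $u(t,y''+t(1-\e)z)>\lambda$ for $t\ge T_\e$, uniformly in $y\in U$ and $z\in\W_0$. Hence $d(x,E_\lambda(t))\le D+\sqrt N+\e t\sup_{\W_0}|z|$, which is $o(t)$ after letting $\e\to0$.

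\emph{Upper bound (outer estimate).} We must show that if $d(x,U+t\W_0)\ge\e t$, then $u(t,x)\le\lambda$ for $t$ large, uniformly in such $x$. Writing $\W_0=\bigcap_e\mc H_e$, the condition implies $x\notin U+t(1+\e')\W_0$ for some $\e'>0$ depending on $\e$, since $\W_0$ contains a ball $B_r$. The natural supersolution for a single half-space is the pulsating front: if $U\subset\{y\cdot e\le a\}$, then $u\le \phi_e$ suitably shifted, plus exponentially small errors in the bistable and ignition cases. Here, however, $U$ is arbitrary, so at the point $x$ no single half-space separates $x$ from all of $U$. This is where assumption \eqref{supt-plane} and the ``retracting solutions'' from the abstract enter. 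I would build a local supersolution near $x$ as follows. Since $x-t(1+\e')\overline{\W_0}$ does not meet $U$, the solution is initially $0$ on the set $x-t(1+\e')\W_0$, which is large and convex. Let $w$ solve \eqref{main} with initial datum $\1_{\R^N\setminus (x-s\W_0)}$, the complement of a dilated Wulff shape, with $s=t(1+\e')$. This is the retracting solution: the $0$-region shrinks. The key claim is that the $0$-region is still nonempty at time $t$ near its centre, namely $w(t,x)\le\lambda$ for $s=(1+\e')t$ and $t$ large. To prove this claim, cover $\R^N\setminus(x-s\W_0)$ by half-spaces $\{y: (y-x)\cdot(-e)\ge s\,c^*(-e)\}$ — note the direction reversal, which requires $\W_0$ for the reflected problem, or equivalently using $\mc H_e$ for the directions with supporting planes. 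Then bound $w$ by the sum, or by the maximum in the KPP case, of the corresponding front supersolutions. Condition \eqref{supt-plane} guarantees that every half-space in the Wulff intersection touches $\W_0$. This is what allows a \emph{finite} subfamily of half-spaces, whose fronts each travel distance $\approx c^*(e)t<s\,c^*(e)$ and so stay $\ge \e' t\,c^*$ away from $x$, to cover the complement up to an $o(t)$ error. Summing finitely many exponentially small tails then yields $w(t,x)\to0$. For bistable and ignition $f$, a sum of supersolutions is not a supersolution, so instead I would use the Fife–McLeod type construction $\phi_e(t+\xi(t),\cdot)+\delta e^{-\mu t}$ for each piece and handle corner regions by local comparison. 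Finally $u\le w$ by comparison gives $u(t,x)\le\lambda$.

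The main obstacle is this retracting-solution upper estimate: controlling corners where several front supersolutions interact, and doing so uniformly in $x$ and in the geometry of $U$. Combining the two estimates gives $\frac1t d_H(E_\lambda(t),U+t\W_0)\to0$.
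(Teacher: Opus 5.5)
Your architecture is the paper's. The inner bound is Lemma~\ref{lem:subset}, but only your second option works there: Proposition~\ref{pro:rho} needs radius exactly $\rho$, so one uses compactness of the centres $y_0\in[0,1]^N$ to find times $T'_{y_0}\le T'$ at which the solution issued from $\1_{B_\rho(y_0)}$ lies above $\theta\1_{B_\rho}$, as in \eqref{T'y}. The outer bound is reduced, exactly as in Lemma~\ref{lem:upLS}, to the smallness at $(t,x)$ of the retracting solution issued from $\1_{\R^N\setminus(x-s\W_0)}$ with $s=(1+\e')t$. Note that $\R^N\setminus(x-s\W_0)=\bigcup_{e\in\Sph}\{y:(x-y)\cdot e\ge s\,c^*(e)\}$, and each piece is invaded by the front in direction $e$ at speed $c^*(e)$. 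No reflected problem is involved, and $c^*(-e)$ in your formula is a slip.

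In the KPP case your plan is correct and is a mild variant of Lemma~\ref{extract-kpp}. By continuity and positivity of $c^*$, for every $\delta>0$ finitely many $e_1,\dots,e_l$ satisfy $\bigcap_i\mc{H}_{e_i}\subset(1+\delta)\W_0$. The capped sum (not the maximum) of the suitably shifted exponential solutions $e^{-\lambda_i(y\cdot e_i-c^*(e_i)t)}\varphi_i(y)$ is then a supersolution that is small at $(t,x)$. Because your faces lie on the Wulff hyperplanes $H_{e_i}$, the speeds are exactly $c^*(e_i)$. The paper instead uses an arbitrary polyhedron $P^\e\supset K^{\e/2}$ and needs \eqref{supt-plane} to get $a_i\ge c^*(e_i)$.

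The same observation locates the gap in the ignition and bistable cases. The finite covering, which you present as the role of \eqref{supt-plane}, holds without it. So if Fife--McLeod supersolutions on each face plus ``local comparison'' at the corners cost only $o(t)$, you would have proved the theorem without \eqref{supt-plane}, contradicting Remark~\ref{remCE1}. The corners are exactly where that counterexample lives. Suppose $R_e=\max_{\overline{\W_0}}x\cdot e<c^*(e)$. The maximum of your planar pieces has $0$-region roughly $x-(s-t)P$ with $P\subset(1+\delta)\W_0$, and its extent in direction $-e$ shrinks only at speed $\sup_P z\cdot e\le(1+\delta)R_e$. Yet the initial $1$-region contains the half-space $\{y:(x-y)\cdot e\ge sR_e\}$, since $\W_0\subset\{z\cdot e<R_e\}$, and its front moves at speed $c^*(e)>R_e$. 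Hence the true solution outruns the maximum of the planar pieces by a distance of order $(c^*(e)-R_e)t$, precisely at corners of $x-(s-t)P$ formed by faces whose normals differ from $e$. Any scheme that controls corners by comparison with the individual faces therefore fails in general. Moreover, the $\delta e^{-\mu t}$ correction is unavailable when $f\equiv0$ near $0$ or when $0$ is only weakly stable. You never say how \eqref{supt-plane} would enter at the corners, yet that is the only place it can matter.

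The missing idea is that of Lemma~\ref{lem:vk<e}:
\begin{itemize}
\item Replace the polyhedron by the $C^1$ set $K^\e=\overline{\W_0}+\overline{B_\e}$, so that there are no corners.
\item Argue by contradiction at the first time the retracting solution reaches $\eta$ in the shrinking set $(-\kappa+R+t)K^\e$, and blow up around the contact point.
\item The limit is an entire solution with $v^*(0,0)=\eta$ and $v^*\le\eta$ on $\{x\cdot\nu>(c^*(\nu)+\e)t\}$ for $t\le0$.
\item This is where \eqref{supt-plane} is used, via Lemma~\ref{lemma:app-W0}-(i): $z\cdot\nu=c^*(\nu)+\e$ at every $z\in\partial K^\e$, so the receding speed is supercritical.
\item The contradiction then comes from the Liouville Lemma~\ref{lemma:bistable} in the bistable case. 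In the ignition case it comes from Lemma~\ref{lemma:combu} followed by an exponential barrier on $P^{\e,\bar x_n}$ in the zone where $f\equiv0$.
\end{itemize}
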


One can intuitively understand the above result in the following way. If the initial support $U$ is a Euclidean ball with radius $\rho>0$ given by Proposition~\ref{pro:rho}, then the region where $u$ is close to $1$ is approximately given by the set $t\mathcal{W}_0$, for large $t$. For an unbounded initial support~$U$, one can take balls $B_{\rho}(x_0)$ centered at any given $x_0\in U_{\rho}$ as initial supports, and infer, by comparison, that the region where $u\sim 1$ approximately grows {\em at least} as $U_\rho+t\W_0$. Clearly, assumption \eqref{UUrho} allows one to replace $U_\rho$ by~$U$. The fact that $u\sim 1$ also grows {\em at most} as $U+t\W_0$ is the hardest part of the proof. This interpretation is analogous to the Huygens principle in optics, according to which every point on a wavefront acts as a source of secondary wavelets. We show in Remark~\ref{remCE1} below that, without the assumption~\eqref{supt-plane}, Theorem~\ref{Th:levelset} may fail, contradicting the Huygens principle.

Next, we focus on the spreading set. From the expressions \eqref{w0}-\eqref{W0-wulff}, the spreading speed and set for compact initial supports are independent of the shapes of supports. However, for general unbounded initial supports, these two notions of spreading speed and set are much more intricate and rely on bounded and unbounded directions of the initial supports~$U$. This is shown by the last two authors of the present paper in~\cite{HR1} for the homogeneous case~\eqref{HRD}. Loosely speaking, in order to obtain the spreading set for a general initial support $U$, one should be able to take the limit as $t\to+\infty$ of the set $U+t\W_0$, provided by Theorem~\ref{Th:levelset}, rescaled by $1/t$. This is not always possible, and when it is not, the spreading set may not exist, see~\cite[Section~6]{HR1}. A sufficient condition that allows one to pass to the limit is expressed in terms of the set of {\it bounded directions} and the set of {\it unbounded directions} of $U\subset\R^N$, which are defined respectively by
$$\left\{\baa{l}
\displaystyle\mc{B}(U):=\Big\{\xi\in\Sph:\liminf_{\tau\to+\infty}\frac{d(\tau\xi,U)}{\tau}>0\Big\},\vspace{3pt}\\
\displaystyle\mc{U}(U):=\Big\{\xi\in\Sph:\lim_{\tau\to+\infty}\frac{d(\tau\xi,U)}{\tau}=0\Big\}.\eaa\right.$$
The sets~$\mc{B}(U)$ and $\mc{U}(U)$ are disjoint and respectively open and closed relatively to $\Sph$. In general, there may exist a direction that it is neither bounded nor unbounded. For instance, for any given $e\in\mathbb{S}^{N-1}$, if $U=B_1+\{2^n e:\, n\in\mathbb{N}\}$, then $e\not\in \mathcal{B}(U)\cup \mathcal{U}(U)$. We refer to~\cite{HR1} for more details and further examples. 

\begin{theorem}\label{Th:Ass}
Assume one of Hypotheses~\ref{hyp:KPP}-\ref{hyp:bistable} and  that~\eqref{supt-plane} holds. 
If $U_{\rho}\neq \emptyset$, with $\rho>0$ given in Proposition~\ref{pro:rho}, and if
\begin{equation}\label{BUS}
\mc{B}(U)\cup \mc{U}(U_{\rho})=\mathbb{S}^{N-1},
\end{equation}
then the~spreading set  for~$u$ exists and is given by 
$$\W=\mc{C}(\mc{U}(U))+\W_0.$$
\end{theorem}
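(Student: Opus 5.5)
The plan is to prove the two inclusions \eqref{subset} and \eqref{superset} for $\W:=\mc{C}(\mc{U}(U))+\W_0$ separately. The lower bound follows the Huygens-type comparison with balls. The upper bound relies on an estimate of the type used for Theorem~\ref{Th:levelset}, namely an ``at most $U+t\W_0$'' bound. That bound should hold without assumption~\eqref{UUrho}, since it only involves $U$ and not $U_\rho$. Before starting, I will check that $\W$ equals the interior of its closure. This should follow because $\W_0$ is open, convex and contains $0$, while $\mc{C}(\mc{U}(U))$ is a closed cone.

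\emph{Spreading subset.} Let $C\subset\W$ be compact. By compactness and openness of $\W_0$, there is $\e>0$ such that every $x\in C$ can be written as $x=r\xi+y$ with $\xi\in\mc{U}(U)$, $r\in[0,R]$, and $B_{\e}(y)\subset\W_0$. The radius $R$ is uniform in $x$, since $\W_0\supset B_\delta$ for some $\delta>0$. The next step uses \eqref{BUS}. Since $\mc{U}(U)\cap\mc{B}(U)=\emptyset$, we get $\mc{U}(U)\subset\mc{U}(U_\rho)$. Moreover, $\mc{U}(U_\rho)$ is closed, so the limit $d(\tau\xi,U_\rho)/\tau\to0$ is uniform over $\xi\in\mc{U}(U)$ by a Dini/compactness argument; this uniformity has to be checked. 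Hence for $t$ large there is $x_0\in U_\rho$ with $|x_0-tr\xi|\le \e t/2$. The initial datum dominates $\1_{B_\rho(x_0)}$, so comparison with the compactly supported solution together with the result of~\cite{R1} on $\W_0$ yields $u(t,x_0+tz)\to1$ uniformly for $z$ in compact subsets of $\W_0$. Taking $z=x-x_0/t\in \overline{B_{\e/2}(y)}\subset\W_0$ gives the claim uniformly in $x\in C$. To get the uniformity over $x_0$, I will use periodicity: I translate by integer vectors and use that $\rho$ from Proposition~\ref{pro:rho} works for every centre.

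\emph{Spreading superset.} This is the main obstacle. Let $C\subset{\rm int}(\R^N\setminus\W)$ be compact. I first want to show that $tC$ stays at distance $\ge\eta t$ from $U+t\overline{\W_0'}$, where $\W_0'$ is a slightly enlarged Wulff set ($\W_0\subset\W_0'$, with $C$ still outside $\mc{C}(\mc{U}(U))+\overline{\W_0'}$). The proof is by contradiction. If $tx_n\in U+t\overline{\W_0'}+B_{\eta t}$ with $x_n\in C$, then $U$ has points $p_n$ with $p_n/t_n\to x-w$ for some $w\in\overline{\W_0'}$. If $x-w=0$, then $x\in\overline{\W_0'}$, a contradiction. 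Otherwise the direction of $x-w$ is not in $\mc{B}(U)$, hence it lies in $\mc{U}(U)$ by \eqref{BUS}, so $x\in\mc{C}(\mc{U}(U))+\overline{\W_0'}$, again a contradiction.

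It then remains to show that $u(t,\cdot)\to0$ on points whose distance to $U+t\W_0'$ is of order $t$. For this I will use \eqref{supt-plane} and the ``retracting solutions'' announced in the abstract. The idea is to write the complement of $U+t\W_0'$ locally as lying beyond a supporting half-space translated from $U$. For each direction $e$, the pulsating front $\phi_e$ shifted so that it sits to the right of the relevant part of $U$ gives a supersolution. However, $U$ is not a half-space, so a single front does not dominate $\1_U$. I would therefore first construct, for a point $z$ far from $U+t\W_0'$, a supersolution that is close to $1$ on $U$ and small at $z$. I would take it as an infimum or minimum over finitely many pulsating fronts in directions $e_i$, chosen via \eqref{supt-plane} so that the intersection of the corresponding half-spaces approximates $\W_0'$ around $z$. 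The retracting solution, with initial datum equal to $1$ outside a large ball, should decay inside the ball at the rate given by $\W_0$; \eqref{supt-plane} is exactly what guarantees that every $c^*(e)$ is active. Via comparison, this would be shown to give $u(t,z)\le\e$. The delicate point will be getting the retracting estimate uniformly in position.
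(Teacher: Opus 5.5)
Your subset argument and the geometric part of your superset argument are sound, and they follow the paper's Propositions~\ref{prop:subset} and~\ref{prop:superset}. Two small corrections there. The limit $d(\tau\xi,U_\rho)/\tau\to0$ is uniform over $\xi\in\mc{U}(U)$ because these functions are $1$-Lipschitz in $\xi$, not by a Dini argument. Uniformity over centres needs, after integer translation, a compactness argument over centres in $[0,1]^N$, as in Lemma~\ref{lem:subset}.

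\textbf{The gap.} The missing piece is the analytic core, $\sup_{x\notin U+t\alpha\W_0}u(t,x)\to0$ for $\alpha>1$ (Lemma~\ref{lem:upLS}), and the mechanism you sketch for it does not work. If $z\notin U+t\alpha\W_0$, all you know is $U\subset z+\big(\R^N\setminus(-t\alpha\W_0)\big)$. In particular $U$ may surround $z$: take $U=\{x_2\le|x_1|\}$ in $\R^2$ and $z=(0,th)$ with $h$ large. Comparison is global, so there is no ``local'' half-space picture. The supersolution must be close to $1$ on the whole complement of the convex hole $z-t\alpha\W_0$, which is a \emph{union} of half-spaces.
\begin{itemize}
\item A minimum of shifted pulsating fronts is close to $1$ only where \emph{every} front is, i.e.\ on the \emph{intersection} of the outer half-spaces. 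That set does not contain $U$, and it is empty for a bounded hole.
\item That minimum is also small wherever a single front is small, which is the reverse of the geometry you need.
\item The maximum of the fronts has the right shape, but it is not a supersolution.
\end{itemize}
The hole also has to be a dilation of $-\W_0$ (in practice of $-P^\e$ or $-K^\e$), not a ball, or you will not recover $U+t\W_0$. Uniformity in position is not the delicate point: $\Z^N$-periodicity gives it once the estimate holds on $(-\kappa+R+t)C_\e\supset[0,1]^N$.

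\textbf{What is needed.} You need an actual proof of the retracting estimate~\eqref{kW}.
\begin{itemize}
\item \emph{KPP case.} The paper uses a \emph{sum}, truncated by $1$, of exponential solutions $e^{-\lambda_i(x\cdot e_i+a_i(\kappa-\tau-t))}\varphi_i(x)$ of the linearized equation. These are built on a polyhedral approximation $P^\e=\bigcap_i\{x\cdot e_i\le a_i\}$ of $\overline{\W_0}$. Superposition is legitimate because $f(x,s)\le f_s(x,0)s$. The root $\lambda_i>0$ of $k(e_i,\lambda)+a_i\lambda=0$ exists because $a_i\ge c^*(e_i)$, and this is exactly where~\eqref{supt-plane} enters.
\item \emph{Ignition/bistable case (Hypotheses~\ref{hyp:ignition}-\ref{hyp:bistable}).} No superposition is available, and the paper argues by contradiction. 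Take the first time at which $v_{-\kappa_nK^\e}$ reaches $\eta$ on the shrinking set $(-\kappa_n+n+t)K^\e$, and blow up around the touching boundary point. This gives an entire solution $v^*$ with $v^*(0,0)=\eta$ and $v^*\le\eta$ on $\{x\cdot\nu>(c^*(\nu)+\e)t\}$ for $t\le0$. The speed $c^*(\nu)+\e$ comes from $z\cdot\nu=c^*(\nu)+\e$ for $z\in\partial K^\e$ with exterior normal $\nu$, i.e.\ again from~\eqref{supt-plane}. In the bistable case this is excluded by the Liouville-type Lemma~\ref{lemma:bistable}. In the ignition case it is excluded by Lemma~\ref{lemma:combu}, which forces $v^*\equiv\eta$, combined with a local sum of exponentials, valid because $f\equiv0$ below $\sigma$.
\end{itemize}
Without one of these arguments, your superset part, and hence the theorem, is not proved.
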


\begin{remark}\label{remCE1}{\rm Without the assumption~\eqref{supt-plane} the conclusions of Theorems~\ref{Th:levelset}-\ref{Th:Ass} may fail. In particular, if~\eqref{supt-plane} is not fulfilled then the set $\mc{C}(\mc{U}(U))+\W_0$ may fail being a spreading set, and actually may fail being a spreading superset. Assume indeed that there is $e\in\Sph$ for which~\eqref{supt-plane} does not hold,~i.e.
$$R_e:=\max_{x\in \overline{\W_0}} \;x\cdot e<c^*(e)$$
(such a situation can actually occur, as shown in~\cite{DG,GR}), which means that the hyperplane~$\partial\mc{H}_e=\{x\in\R^N:x\cdot e=c^*(e)\}$ is not a supporting hyperplane for $\W_0$. If one chooses $U=\{x\in\R^N:\, x\cdot e\le 0\}$, it follows from~\cite{LZ1,W} under Hypothesis~\ref{hyp:KPP}, and from \cite{X1,X2} under Hypotheses~\ref{hyp:ignition}-\ref{hyp:bistable}, that the spreading set is $\W=\mc{H}_e=\{x\in\R^N:x\cdot e<c^*(e)\}$, which strictly contains the set
$$\mc{C}(\mathcal{U}(U))+\W_0=U+\W_0=
\{x\in\R^N:\ x\cdot e<R_e\},$$
whence the conclusion of Theorem \ref{Th:Ass} fails. Moreover, for any $\lambda\in(0,1)$, there holds
$$\baa{ll}
& d_H\big(\{x\in\R^N:u(t,x)>\lambda\}\,,\, U+t\W_0\big)\vspace{3pt}\\
= \ & d_H\big(\{x\in\R^N:\ x\cdot e<c^*(e)t\}\,,\, \{x\in\R^N:\ x\cdot e<R_e t\}\big)+o(t)\vspace{3pt}\\
= \ & (c^*(e)-R_e)t+o(t) \qquad\text{as }\, t\to+\infty,\eaa
$$
which shows that the conclusion of Theorem \ref{Th:levelset} fails too.}
\end{remark}

If $U$ is star-shaped with respect to a point $x_0\in\R^N$ and $d_{H}(U,U_{\rho})<+\infty$,
 then~\eqref{BUS} holds. In particular, if $U$ is a cone, namely if there are $x_0\in\R^N$ and a set $\mc{A}\subset\Sph$ such that $U=x_0+\mc{C}(\mc{A})$, and if $d_{H}(U,U_{\rho})<+\infty$, then~\eqref{BUS} holds. Other conditions ensuring~\eqref{BUS} are given in~\cite[Proposition~5.1]{HR1}.
  
By geometric arguments, we can deduce from Theorem~\ref{Th:Ass} the generalized Freidlin-G\"{a}rtner formula~\eqref{ASS-G} below for the spreading speeds of solutions of~\eqref{main} with general initial supports. To do so, let us introduce an auxiliary notation, for $\zeta\in\R^N\setminus\{0\}$:
$$\hat\zeta:=\frac{\zeta}{|\zeta|}.$$

\begin{theorem}\label{Th:asspeed}
Under the conditions of Theorem~\ref{Th:Ass}, the solution~$u$ admits spreading speed $\omega(\xi)$ for every $\xi\in\mathbb{S}^{N-1}$, and it is given by
\begin{eqnarray}\label{ASS-G}
\omega(\xi)=\left\{\begin{array}{ll}
\displaystyle\sup_{z\in\mc{C}(\mc{U}(U))}\frac{\omega_0(\widehat{\xi-z})}{|\xi-z|}\ \in[\omega_0(\xi),+\infty) & \hbox{if $\xi\in\mc{B}(U)$},\\
+\infty & \hbox{if $\xi\in \mathcal{U}(U)=\Sph\!\setminus\!\mathcal{B}(U)$},
\end{array}\right.
\end{eqnarray}
and the spreading set $\W$ can be rewritten as 
$$\W=\big\{r\xi:\, \xi\in\mathbb{S}^{N-1},\, 0\le r< \omega(\xi)\big\}.$$
\end{theorem}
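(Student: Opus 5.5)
Theorem~\ref{Th:Ass} gives the spreading set $\W=\mc{C}(\mc{U}(U))+\W_0$, so the plan is to show that this set is star-shaped with respect to the origin. We then read off its radial profile along each ray and identify that profile with formula~\eqref{ASS-G}. The spreading speeds follow from the remark after Definition~\ref{def:W}: if $\W=\{r\xi:0\le r<\omega(\xi)\}$, then $\omega(\xi)$ is the spreading speed in the direction $\xi$. Strictly, that remark assumes $\omega$ continuous. Without continuity, we check~\eqref{conv:ss} directly: for $c<\omega(\xi)$ we have $c\xi\in\W$, so~\eqref{subset} applied to the compact set $\{c\xi\}$ gives $u(t,ct\xi)\to1$. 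For $c>\omega(\xi)$ we must show that $c\xi\in{\rm int}(\R^N\setminus\W)$, and then~\eqref{superset} gives $u(t,ct\xi)\to0$.

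The first step is the geometry of $\W$. Since $\W_0$ is open, convex and contains $0$ (because $c^*>0$), and $\mc{C}:=\mc{C}(\mc{U}(U))$ is a closed cone containing $0$, the sum $\W$ is open and contains $0$. It is also star-shaped with respect to $0$: if $x=z+w$ with $z\in\mc{C}$ and $w\in\W_0$, and $s\in[0,1]$, then $sx=sz+sw$ with $sz\in\mc{C}$ and $sw\in\W_0$ by convexity. Hence $\W=\{r\xi:0\le r<\omega(\xi)\}$ with $\omega(\xi):=\sup\{r\ge0:r\xi\in\W\}$. By~\eqref{W0}, $y\in\W_0$ if and only if $y=0$ or $|y|<\omega_0(\hat y)$. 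Thus $r\xi\in\W$ if and only if there is $z\in\mc{C}$ with $|r\xi-z|<\omega_0(\widehat{r\xi-z})$, or $r\xi\in\mc{C}$. Writing $z=rz'$ (the cone is invariant under dilation) and using the $1$-homogeneity of $y\mapsto|y|/\omega_0(\hat y)$, this becomes: there is $z'\in\mc{C}$ with $r<\omega_0(\widehat{\xi-z'})/|\xi-z'|$ (for $z'\ne\xi$), or $\xi\in\mc{C}$. Taking the supremum over $z'$ gives~\eqref{ASS-G} when $\xi\notin\mc{C}$, that is, when $\xi\notin\mc{U}(U)$; note that $\mc{C}\cap\Sph=\mc{U}(U)$. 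When $\xi\in\mc{U}(U)$, the whole ray lies in $\mc{C}\subset\W$, so $\omega(\xi)=+\infty$. Condition~\eqref{BUS}, together with $\mc{U}(U_\rho)\subset\mc{U}(U)$ and the disjointness of $\mc{B}(U)$ and $\mc{U}(U)$, gives $\mc{U}(U)=\Sph\setminus\mc{B}(U)$. The lower bound $\omega(\xi)\ge\omega_0(\xi)$ comes from taking $z=0$.

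The second step is finiteness. For $\xi\in\mc{B}(U)$ we need $\omega(\xi)<+\infty$. Since $\mc{U}(U)$ is closed and does not contain $\xi$, the distance $\delta:=d(\xi,\mc{C})$ is positive; this uses the compactness of $\mc{U}(U)$ and the fact that $d(\xi,\mathbb{R}_+a)=0$ would force $a=\xi$. By continuity and positivity of $\omega_0$ on $\Sph$, $\omega_0\le M$ on $\Sph$. Hence $\omega_0(\widehat{\xi-z})/|\xi-z|\le M/\delta$ for every $z\in\mc{C}$.

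The main obstacle is the exterior part of~\eqref{conv:ss}: showing that $c\xi\in{\rm int}(\R^N\setminus\W)$ for $c>\omega(\xi)$, which amounts to the equality $\overline{\W}\cap\{r\xi\}=\{r\xi: r\le\omega(\xi)\}$. I would prove that $\W$ is in fact strictly star-shaped: if $x\in\overline{\W}$ and $s\in[0,1)$, then $sx\in\W$. Write $x=\lim(z_n+w_n)$ with $z_n\in\mc{C}$ and $w_n\in\W_0$. Then $sx=\lim(sz_n+sw_n)$, and $sw_n$ lies in $s\overline{\W_0}$, which is a compact subset of $\W_0$ (since $\W_0$ is bounded, open, convex and contains $0$) staying at distance at least $\epsilon>0$ from $\partial\W_0$. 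Choose $n$ with $|sx-sz_n-sw_n|<\epsilon$; then $sx\in sz_n+\W_0\subset\W$. Consequently, if $c>\omega(\xi)$ were such that $c\xi\in\overline{\W}$, then $c'\xi\in\W$ for every $c'<c$, including some $c'>\omega(\xi)$, which contradicts the definition of $\omega(\xi)$. Hence $c\xi\notin\overline{\W}$, and since $\overline{\W}$ is closed, $c\xi\in{\rm int}(\R^N\setminus\W)$. This completes the proof.
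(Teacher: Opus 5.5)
Your proof is correct. Its core is the ray-by-ray identification of $\mc{C}(\mc{U}(U))+\W_0$, using the dilation invariance of the cone and the $1$-homogeneity of $y\mapsto|y|/\omega_0(\hat y)$. This is the same computation as in the paper, which checks directly that $r\xi\in\W$ for $r<\omega(\xi)$. It also checks that $|r\xi-z|=r\,|\xi-z/r|\ge\omega_0(\widehat{r\xi-z})$ for every $z\in\mc{C}(\mc{U}(U))$ when $r\ge\omega(\xi)$.

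You differ in how you pass from the set identity to the spreading speeds. The paper relies on the remark after Definition~\ref{def:W}, which needs $\xi\mapsto\omega(\xi)$ to be continuous. It only asserts that continuity, right after the statement, from the closedness of $\mc{C}(\mc{U}(U))$ and the continuity and positivity of $\omega_0$. You instead prove $s\overline{\W}\subset\W$ for $s\in[0,1)$, which gives $c\xi\notin\overline{\W}$, i.e.\ $c\xi\in{\rm int}(\R^N\setminus\W)$, for every $c>\omega(\xi)$ without any regularity of $\omega$. You also make finiteness explicit through the bound $\omega(\xi)\le\max_{\Sph}\omega_0/d(\xi,\mc{C}(\mc{U}(U)))$.

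Your route is more self-contained. Proving upper semicontinuity of $\omega$ would otherwise require a compactness argument restricting $z$ to a bounded part of the cone, plus a separate look near $\partial\mc{B}(U)$ where $\omega$ blows up. The paper's route, once continuity is justified, additionally gives the continuity of $\omega$, which is of independent interest for describing $\partial\W$ radially.
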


From the closedness of $\mc{U}(U)$ and $\mc{C}(\mathcal{U}(U))$ and the continuity and positivity of~$\omega_0$ in~$\Sph$, it follows that the $\sup$ in \eqref{ASS-G} when $\xi\in\mathcal{B}(U)$ is reached and positive, and that the map $\xi\mapsto \omega(\xi)\in (0,+\infty]$ is continuous in $\mathbb{S}^{N-1}$.

Furthermore, on the one hand, if in Theorem~\ref{Th:asspeed} the set $U$ is bounded, then $\mc{C}(\mc{U}(U))=\mc{C}(\emptyset)=\{0\}$ and~\eqref{ASS-G} amounts to the standard Freidlin-G\"artner formula $\omega(\xi)=\omega_0(\xi)$ for each $\xi\in\Sph$. On the other hand, for the homogeneous equation~\eqref{HRD}, for which $\omega_0(\xi)=c^*>0$ is independent of~$\xi$,~\eqref{ASS-G} reduces by direct inspection to 
$$\omega(\xi)=\left\{\baa{ll}
\displaystyle\!\sup_{\su{e\in\mathcal{U}(U)}{e\.\xi>0}}\!\frac{c^*}{\sqrt{1-(e\cdot\xi)^2}}\ \in(c^*,+\infty) & \!\!\hbox{if $\xi\in \mathcal{B}(U)$ and $\big\{e\in \mathcal{U}(U):e\cdot\xi>0\big\}\neq\emptyset$},\vspace{3pt}\\
\!c^* & \!\!\hbox{if $\xi\in\mathcal{B}(U)$ and $\big\{e\in \mathcal{U}(U):e\cdot\xi>0\big\}=\emptyset$},\vspace{3pt}\\
\!+\infty & \!\!\hbox{if $\xi\in \mathcal{U}(U)$}.\eaa\right.$$
In other words, Theorem~\ref{Th:asspeed} generalizes \cite[Theorem~2.1]{HR1} to the periodic equation~\eqref{main} and under more general assumptions on $f$.

If the condition~\eqref{BUS} is violated, then there may not exist spreading sets and speeds even for the homogeneous equation~\eqref{HRD}. For example, if $U=\cup_{n\in\mathbb{N}}B_{2^n+1}\!\setminus\!B_{2^n-1}$ and $f(s)=s(1-s)$, then $\mathcal{B}(U)=\mathcal{U}(U)=\emptyset$ and no spreading sets and speeds exist, see  \cite[Proposition~6.1]{HR1} and see \cite{HR1} also for other counter-examples.

Finally, we show equivalent formulas of the spreading speeds and sets for particular cone-like supports, in which we will see the relationship between the spreading sets and the hyperplanes $H_e=\{x\in\R^N:x\cdot e=c^*(e)\}$. Namely, let us now consider the case when either~$\mathcal{B}(U)$ or~$\mathcal{U}(U)$ generates a non-trivial convex cone. If $\emptyset\neq\mc{B}(U)\subsetneqq\Sph$ and~$\mc{C}(\mathcal{B}(U))$ is convex, then $U$ is called V-shaped. If $\emptyset\neq\mc{U}(U)\subsetneqq\Sph$ and $\mc{C}(\mathcal{U}(U))$ is convex, then $U$ is called $\Lambda$-shaped. For $\mc{A}\subset\Sph$, we define
\be\label{defLA}
\mc{L}(\mc{A}):=\big\{e\in\Sph:\ \xi\cdot e\ge0 \text{ \ for all \ }\xi\in \mc{A}\big\}.
\ee
Notice that $\mc{L}(\mc{A})$ is always closed in $\Sph$, and $\mc{C}(\mc{L}(\mc{A}))$ is always convex. If~$U$ is V-shaped, then $\emptyset\neq\mc{L}(\mc{B}(U))\subsetneqq\Sph$. If $U$ is $\Lambda$-shaped, then ${\emptyset\neq\mc{L}(\mc{U}(U))\subsetneqq\Sph}$.

\begin{corollary}\label{theo:ass}
Assume that all conditions of Theorem~\ref{Th:Ass} hold.
\begin{enumerate}
\item[(i)] If the initial support $U$ is V-shaped, then the spreading set for $u$ is given by
$$\mathcal{W}_V:=\Big\{x\in\R^N:\!\displaystyle\inf_{e\in\mathcal{L}(\mathcal{B}(U))}(x\cdot e- c^*(e))<0\Big\}=\big\{r\xi:\xi\in\mathbb{S}^{N-1},\, 0\le r< \omega_V(\xi)\big\},$$
where 
\begin{eqnarray}\label{ASS-V}
\omega_V(\xi):=\left\{\begin{array}{lll}
\displaystyle\sup_{e\in\mathcal{L}(\mathcal{B}(U))} \frac{c^*(e)}{e\cdot \xi}\in (0,+\infty) & \hbox{if $\xi\in \mathcal{B}(U)$},\\
+\infty & \hbox{if $\xi\in\mc{U}(U)=\mathbb{S}^{N-1}\!\setminus\!\mathcal{B}(U)$}.
\end{array}
\right.
\end{eqnarray}
\item[(ii)] If the initial support $U$ is $\Lambda$-shaped, then the spreading set for $u$ is given by
$$\mathcal{W}_{\Lambda}:=\Big\{x\in\R^N:\!\displaystyle\inf_{e\in\mathcal{L}(\mathcal{U}(U))}(x\cdot e+ c^*(-e))> 0\Big\}=\big\{r\xi:\xi\in\mathbb{S}^{N-1},\, 0\le r<\omega_{\Lambda}(\xi)\big\},$$
where
\begin{eqnarray}\label{omega-Lambda}
\omega_{\Lambda}(\xi):=\left\{\begin{array}{ll}
\displaystyle\inf_{\su{e\in\mathcal{L}( \mathcal{U}(U))}{e\.\xi<0}}\frac{c^*(-e)}{-e\cdot \xi}\in (0,+\infty) & \hbox{if $\xi\in\mc{B}(U)$},\\
+\infty &  \hbox{if $\xi\in\mathcal{U}(U)=\mathbb{S}^{N-1}\!\setminus\!\mathcal{B}(U)$}.
\end{array}
\right.
\end{eqnarray}
\end{enumerate}
\end{corollary}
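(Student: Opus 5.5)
By Theorem~\ref{Th:Ass}, the spreading set exists and equals $\W=\mc{C}(\mc{U}(U))+\W_0$. Spreading sets are unique, so it suffices to prove the set identities $\mc{C}(\mc{U}(U))+\W_0=\W_V$ in case (i) and $\mc{C}(\mc{U}(U))+\W_0=\W_\Lambda$ in case (ii). The polar descriptions with $\omega_V,\omega_\Lambda$ are then the radial rewriting of these sets. Alternatively, they follow from Theorem~\ref{Th:asspeed}, using that $\xi\mapsto\omega(\xi)$ is the unique radial function of $\W$. The tool throughout is the Wulff form~\eqref{W0-wulff}, $\W_0=\bigcap_e\mc{H}_e$, together with~\eqref{supt-plane}, which says that each $H_e$ actually touches $\overline{\W_0}$: $\max_{\overline{\W_0}}x\cdot e=c^*(e)$.

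The key lemma will be the following. Let $K=\mc{C}(\mc{A})$ be a closed convex cone with $\mc{A}$ closed. Then, under~\eqref{supt-plane},
\[
K+\W_0=\Big\{x:\ \inf_{e\in\mc{L}(-\mc{A})}\big(x\cdot e-c^*(e)\big)<0\Big\}\ \text{(up to interiors)},
\qquad \mc{L}(-\mc{A})=\{e\in\Sph:\ e\cdot a\le0\ \ \forall a\in\mc{A}\}.
\]
I will prove it in two directions. For "$\subset$", take $e$ in the dual cone. Then $\sup_{K+\W_0}x\cdot e=\sup_{\W_0}x\cdot e=c^*(e)$, by~\eqref{supt-plane} and because $\sup_K x\cdot e=0$. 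For "$\supset$", use Hahn--Banach: $K+\W_0$ is open and convex, so it is the intersection of open half-spaces $\{x\cdot e<h(e)\}$, where $h(e)=\sup_K x\cdot e+\sup_{\W_0}x\cdot e$ is its support function. This is $+\infty$ unless $e$ lies in the dual cone, in which case it equals $c^*(e)$. Continuity and positivity of $c^*$ then give openness and the fact that the infimum is attained.

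\emph{Case (ii), $\Lambda$-shaped.} Here $K=\mc{C}(\mc{U}(U))$ is convex, and the dual cone is $-\mc{L}(\mc{U}(U))$. Substituting $e\to-e$ in the lemma gives the condition $\inf_{e\in\mc{L}(\mc{U}(U))}(-x\cdot e-c^*(-e))<0$ for every half-space. Read carefully as an intersection, this is exactly $x\cdot e+c^*(-e)>0$ for all $e\in\mc{L}(\mc{U}(U))$; by compactness this becomes $\inf_e(\cdot)>0$, which is $\W_\Lambda$. For the radial form, $r\xi\in\W_\Lambda$ iff $r(-e\cdot\xi)<c^*(-e)$ for all such $e$. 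This is automatic when $e\cdot\xi\ge0$, which gives $\omega_\Lambda$. I must also check that $\omega_\Lambda=+\infty$ on $\mc{U}(U)$ and is finite on $\mc{B}(U)$. The first holds since $\xi\in\mc{U}(U)$ forces $e\cdot\xi\ge0$. The second holds because $\xi\notin K$ and $K$ is a closed convex cone, so some $e$ in the polar satisfies $e\cdot\xi<0$.

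\emph{Case (i), V-shaped.} Here $\mc{C}(\mc{U}(U))$ need not be convex, but $\mc{U}(U)=\Sph\setminus\mc{B}(U)$ by~\eqref{BUS}, so $\mc{C}(\mc{U}(U))$ is the closure of the complement of the convex cone $\mc{C}(\mc{B}(U))$. I will show that $x\in\mc{C}(\mc{U}(U))+\W_0$ iff some $e\in\mc{L}(\mc{B}(U))$ has $x\cdot e<c^*(e)$. For the forward direction, write $x=z+w$ with $z\in\mc{C}(\mc{U}(U))$ and $w\in\W_0$. Since $z\notin\mathrm{int}\,\mc{C}(\mc{B}(U))$, separation gives $e\in\mc{L}(\mc{B}(U))$ with $z\cdot e\le0$, and then $x\cdot e<c^*(e)$. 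Conversely, suppose $x\cdot e<c^*(e)$ with $e\in\mc{L}(\mc{B}(U))$. By~\eqref{supt-plane} pick $y\in\overline{\W_0}$ with $y\cdot e=c^*(e)$; a nearby $w\in\W_0$ still has $(x-w)\cdot e<0$. Then $x-w\notin\overline{\mc{C}(\mc{B}(U))}$, since $e$ is nonnegative on that cone, and therefore $x-w\in\mc{C}(\mc{U}(U))$. The radial form follows as before: $r\xi\in\W_V$ iff $r\,e\cdot\xi<c^*(e)$ for some $e$. This is automatic for $\xi\in\mc{U}(U)$, since then some $e$ has $e\cdot\xi\le0$. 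For $\xi\in\mc{B}(U)$ it gives $\omega_V=\sup c^*(e)/(e\cdot\xi)$, where $e\cdot\xi>0$ must be checked, by openness of $\mc{B}(U)$ relative to the interior of the cone.

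The main obstacle is the boundary bookkeeping rather than the convex geometry. This means handling the strict versus non-strict inequalities, checking that the sets coincide with the interiors of their closures, and treating directions on $\partial\mc{C}(\mc{B}(U))$. It also means verifying the limit cases: that $\mc{L}(\mc{B}(U))$ is nonempty, and that the sup and inf are finite and positive by compactness and continuity of $c^*$. Hypothesis~\eqref{supt-plane} enters exactly in identifying the support function of $\W_0$ with $c^*$.
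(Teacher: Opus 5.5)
Your proposal is correct and follows essentially the paper's route. Theorem~\ref{Th:Ass} reduces everything to the set identities $\mc{C}(\mc{U}(U))+\W_0=\W_V$ (resp.\ $=\W_\Lambda$), which you prove by cone duality and separation (the paper's Lemma~\ref{lemma:P} and projection arguments), with \eqref{supt-plane} entering only to identify the support function of $\W_0$ with $c^*$ --- the paper merely first proves the identities with $\hat c$ and then uses $\hat c=c^*$, which also yields Remark~\ref{remCE}. One slip to fix: your key lemma for a convex cone is written with $\inf_e(x\cdot e-c^*(e))<0$, which describes a union of half-spaces, whereas the correct statement (the one your Hahn--Banach proof and case (ii) actually use) is $x\cdot e<c^*(e)$ for every $e$ in the dual cone, i.e.\ $\max_e(x\cdot e-c^*(e))<0$.
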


In the case when $U$ is V-shaped, we have from Lemma~\ref{lemma:P} below that
$$\mc{C}(\overline{\mathcal{B}(U)})=\big\{x\in\R^N:\forall\,e\in\mathcal{L}(\mathcal{B}(U)),\,x\cdot e\ge 0\big\}.$$
It then follows from the relative openness of $\mathcal{B}(U)$ and closedness of $\mathcal{L}(\mathcal{B}(U))$ in $\Sph$ that $\inf_{e\in\mathcal{L}(\mathcal{B}(U))} \{e\cdot\xi\}>0$ for any $\xi\in \mathcal{B}(U)$ and that $\inf_{e\in\mathcal{L}(\mathcal{B}(U))} \{e\cdot\xi\}=0$ for any $\xi\in \overline{\mathcal{B}(U)}\!\setminus\!\mc{B}(U)$. Therefore, by continuity and positivity of the map $e\mapsto c^*(e)$ in~$\Sph$, the $\sup$ in $\omega_V(\xi)$ is finite and reached for any $\xi\in\mc{B}(U)$, and the map $\xi\mapsto\omega_V(\xi)\in(0,+\infty]$ is continuous in~$\mathbb{S}^{N-1}$. Similarly, when $U$ is $\Lambda$-shaped, the set $\{e\in\mc{L}(\mc{U}(U)):e\cdot\xi<0\}$ is not empty and the $\inf$ in $\omega_{\Lambda}(\xi)$ is reached for any $\xi\in\mc{B}(U)$, and the map~$\xi\mapsto\omega_{\Lambda}(\xi)\in(0,+\infty]$ is continuous in $\mathbb{S}^{N-1}$. 

\begin{remark}\label{remCE}{\rm
Corollary~\ref{theo:ass} shows some equivalent expressions of the spreading set~$\mc{C}(\mathcal{U}(U))+\W_0$ for V-shaped and $\Lambda$-shaped initial supports $U$, under the assumptions~\eqref{supt-plane} and~\eqref{BUS}. In the proof of Corollary~\ref{theo:ass}, we show that for a V-shaped initial support~$U$ satisfying~\eqref{BUS}, even if $\W_0$ does not satisfy condition~\eqref{supt-plane}, there is another expression of $\mc{C}(\mathcal{U}(U))+\W_0$, namely
$$\mc{C}(\mathcal{U}(U))+\W_0=\big\{r\xi:\, \xi\in\mathbb{S}^{N-1},\, 0\le r<\hat{\omega}(\xi)\big\},$$
where
\be\label{hatomega}
\hat{\omega}(\xi):=\left\{\begin{array}{ll}
\displaystyle\sup_{e\in\mathcal{L}(\mathcal{B}(U))} \frac{\hat{c}(e)}{e\cdot \xi}\in (0,+\infty) & \hbox{if $\xi\in \mathcal{B}(U)$},\\
+\infty & \hbox{if $\xi\in\mc{U}(U)=\mathbb{S}^{N-1}\setminus \mathcal{B}(U)$},\end{array}\right.
\ee
with $\hat{c}$ being the support function of $\W_0$, i.e.
$$\hat{c}(e):=\sup_{\xi\in\mathbb{S}^{N-1}} \omega_0(\xi)\xi\cdot e\le c^*(e),$$
see~\eqref{hatW2} below. If~\eqref{supt-plane} holds, then $\hat{c}(e)=c^*(e)$ for all $e\in\Sph$, and $\hat{\omega}(\xi)= \omega_{V}(\xi)$ for all $\xi\in\Sph$. 
Instead, if~\eqref{supt-plane} does not hold, and moreover 
$\hat{\omega}(\xi)< \omega_{V}(\xi)$ for some $\xi\in\mathcal{B}(U)$, then $\mc{C}(\mathcal{U}(U))+\W_0\subsetneqq\W_V$. 
We recall indeed the example in Remark~\ref{remCE1}~with
$$U=\big\{x\in\R^N:\, x\cdot e\le 0\big\},$$
which is V-shaped $($and also $\Lambda$-shaped$)$ and satisfies~\eqref{BUS},
for which one has $\W=\mc{H}_e:=\{x\in\R^N:\,x\cdot e< c^*(e)\}$, which strictly contains  
$\mc{C}(\mathcal{U}(U))+\W_0=\{x\in\R^N:\,x\cdot e<\hat{c}(e)\}$ provided that~$\hat{c}(e)<c^*(e)$.
This is a counter-example to Corollary~\ref{theo:ass} without~\eqref{supt-plane} for both $V$-shaped and $\Lambda$-shaped initial supports.}
\end{remark}

\subsubsection*{Strategy of the proofs and outline of the paper} 

Our main results are proved by deriving some lower and upper bounds on the upper-level sets of the solution. While the lower bounds are readily deduced from Proposition~\ref{pro:rho} via a superposition principle, the proofs of the upper bounds require new ideas. The cornerstone is the use of a family of auxiliary solutions which are initially supported on large dilations of suitable approximations of the set~$-\W_0$. We call them {\em retracting solutions}, because their level sets retract as time progresses. The problem then boils down to locating these level sets. This is where we will need assumption~\eqref{supt-plane}, without which our results fail in general, as shown in Remark~\ref{remCE1}.

More precisely, the paper is organized as follows. In Section~\ref{subsection2.1} we prove Proposition~\ref{pro:suf-conditions}, which provides one with two sufficient conditions for the validity of the key assumption~\eqref{supt-plane}. Section~\ref{subsection2.2} contains some auxiliary geometric results, especially on convex cones and on the approximation of compact convex sets by polyhedral compact sets, which will be used in the construction of the retracting solutions. Section~\ref{subsection2.3} contains some Liouville-type properties of entire solutions with super-critical speed of propagation in the ignition or weakly bistable cases (Hypotheses~\ref{hyp:ignition} or~\ref{hyp:bistable}). Section~\ref{section3} contains the derivation of some lower bounds of the upper-level sets of the solutions, stated in Proposition~\ref{prop:subset}. Section~\ref{section4} is devoted to the derivation of upper bounds of the upper-level sets of the solutions of~\eqref{main}, stated in Proposition~\ref{prop:superset} in Section~\ref{subsection4.2}. The core of the proof of the upper bounds, in Section~\ref{subsection4.1}, is itself based on some quantitative upper estimates of the retracting solutions. These upper estimates use the application of parabolic maximum principles in suitable dilated domains, and some properties of eigenvalues of elliptic operators obtained after linearizing the equation. The proofs actually rely on different arguments depending on whether Hypothesis~\ref{hyp:KPP} or Hypotheses~\ref{hyp:ignition}-\ref{hyp:bistable} are made: finite sum of exponential functions based on polyhedral exterior approximations of $\overline{\mc{W}_0}$ are used in the former case, while in the latter an argument by contradiction in the spirit of \cite{R1} is used, together with $C^1$ exterior approximations of $\overline{\mc{W}_0}$ and comparison and Liouville-type results of Section~\ref{subsection2.3}. The lower and upper estimates of Sections~\ref{section3} and~\ref{section4} will then be used in the proofs of the main results, which are carried out in Section~\ref{section5}. 

%%%%%%%%%%%%%%%%%%%%%%%%%%%%%%%%%%%%%%%%%%%%%%%%%%%
%%%%%%%%%%%%%%%%%%%%%%%%%%%%%%%%%%%%%%%%%%%%%%%%%%%

\section{Preliminaries}\label{section2}

In Section~\ref{subsection2.1}, we prove Proposition \ref{pro:suf-conditions}. We then investigate in Sections~\ref{subsection2.2}-\ref{subsection2.3} some auxiliary geometric, comparison and Liouville-type results, which will be used in the proofs of the lower and upper estimates of the solutions of~\eqref{main} in Sections~\ref{section3}-\ref{section4}.

%%%%%%%%%%%%%%%%%%%%%%%%%%%%%%%%%%%%%%%%%%%%%%%%%%%

\subsection{Proof of Proposition~\ref{pro:suf-conditions}}\label{subsection2.1}

Let us preliminarily observe that, owing to the definition \eqref{W0-wulff} and the continuity of~$c^*$, one has
$$\partial\W_0=\Big\{x\in\R^N\,:\,\max_{e\in\mathbb{S}^{N-1}}(x\.e-c^*(e))=0\Big\}.$$

\begin{proof}[Proof of Proposition~\ref{pro:suf-conditions}] We will show that (i) $\Rightarrow$ (ii) $\Rightarrow$ \eqref{supt-plane}.

For (i) $\Rightarrow$ (ii), take any $\xi\in \mathbb{S}^{N-1}$. Since $\partial \W_0$ is differentiable at the boundary point $\omega_0(\xi)\xi$, there is a unique exterior normal $\nu\in \mathbb{S}^{N-1}$ at $\omega_0(\xi)\xi$. In other words, the hyperplane $\{x\in\R^N:\, (x-\omega_0(\xi)\xi)\cdot \nu=0\}$ is the unique supporting hyperplane at $\omega_0(\xi)\xi$. On the other hand, by~\eqref{w0} and~\eqref{W0-wulff}, there is $e\in\mathbb{S}^{N-1}$ such that $\omega_0(\xi)\xi\cdot e=c^*(e)$ and $x\cdot e<c^*(e)$ for all $x\in\W_0$. Then, by the uniqueness of the supporting hyperplane at $\omega_0(\xi)\xi$, one has $\nu=e$ and $\omega_0(\xi)\xi\cdot \nu=c^*(\nu)$. Therefore, the minimizer $e$ in \eqref{w0} is unique, showing (ii).

For  (ii) $\Rightarrow$ \eqref{supt-plane}, assume that \eqref{supt-plane} does not hold. Then, there exists $e\in\Sph$ such that
\be\label{under-supt-plane}
\forall\,x\in \partial\W_0,\quad x\. e<c^*(e),
\ee
Notice that \eqref{under-supt-plane} also holds  for all $x\in\W_0$, by~\eqref{W0-wulff}. We then set 
$$c:=\max_{x\in \overline{\W_0}}\,x\.e<c^*(e).$$
Let $z\in \overline{\W_0}$ be such that $z\.e=c$. One has that $z\in\partial \W_0$. By \eqref{w0}, there exists $e'\in\Sph$ such that $z\.e'=c^*(e')>0$, that is, $e'$ is a minimizer in~\eqref{w0} for the direction $\hat{z}=z/|z|$. Recalling that $z\.e=c<c^*(e)$, we see that $e'\neq e$, that is, $e'\. e<1$. One can also observe that the hyperplanes $\{x\in\R^N:x\cdot e=c\}$ and $\{x\in\R^N:x\cdot e'=c^*(e')\}$ are two different supporting hyperplanes of $\W_0$ containing the point $z\in\partial\W_0$, which implies in particular that $\partial\W_0$ is not differentiable at $z$, as seen above in the proof of (i)~$\Rightarrow$~(ii).

We then consider, for $n\in\N\setminus\{0\}$, $x_n:=z+\frac1n(e-e')$. We find that
$$x_n\. e=c+\frac{1-e'\.e}n>c,$$
which shows that $x_n\notin\overline{\W_0}$, hence there exists $e_n\in\Sph$ such that $x_n\. e_n>c^*(e_n)$. As a consequence, we get
$$c^*(e_n)<x_n\. e_n=z\.e_n+\frac{e-e'}n\.e_n\leq c^*(e_n)+\frac{e-e'}n\.e_n,$$
which means that $e\. e_n>e'\. e_n$ for all $n\in\N\setminus\{0\}$. From this, we deduce that the sequence $\seq{e}$ has to converge (up to subsequences) towards a limit $e''\neq e'$ (indeed, otherwise, if $e''=e'$, then $e\cdot e'\ge e'\cdot e'=1$, a contradiction). Using again the continuity of $c^*(e)$ and passing to the limit as $n\to+\infty$ in $c^*(e_n)<x_n\cdot e_n$, we finally infer that
$$c^*(e'')\leq z\.e'',$$
and finally $0<c^*(e'')=z\cdot e''$, since $z\in \overline{\W_0}$. Recalling that $\hat{z}=z/|z|$, we have thereby found that both $e'$ and $e''$ satisfy $\hat{z}\cdot e'>0$, $\hat{z}\cdot e''>0$,
$$\frac{c^*(e')}{\hat{z}\.e'}=\frac{c^*(e'')}{\hat{z}\.e''}=|z|=\min_{\su{\t e\in S^{N-1}}{\hat{z}\cdot\t e>0}}\frac{c^*(\t e)}{\hat{z}\cdot\t e},$$
that is, (ii) fails. The proof of Proposition~\ref{pro:suf-conditions} is thereby complete.
\end{proof}

%%%%%%%%%%%%%%%%%%%%%%%%%%%%%%%%%%%%%%%%%%%%%%%%%%%

\subsection{Some auxiliary geometric results}\label{subsection2.2}

We first recall the notations, for any $\mc{A}\subset\Sph$, of the cone $\mc{C}(\mc{A})$ and the set $\mc{L}(\mc{A})\subset\Sph$ defined in~\eqref{defCA} and~\eqref{defLA}, with the convention that $\mc{C}(\emptyset)=\{0\}$. The first two results, Lemmata~\ref{lemma:P}-\ref{lem:distW}, are concerned with an equivalent characterization of $\mc{C}(\overline{\mc{A}})$ when $\mc{C}(\mc{A})$ is convex on the one hand, and with lower estimates of the distance to large dilations of some compact convex sets on the other hand.

\begin{lemma}\label{lemma:P}
For any $\mc{A}\subset\Sph$ such that $\mc{C}(\mc{A})$ is convex, there holds
$$\mathcal{C}(\overline{\mc{A}})=\big\{x\in\R^N:\ x\cdot e\ge 0\text{ \ for all \ } e\in\mathcal{L}(\mc{A})\big\}.$$
\end{lemma}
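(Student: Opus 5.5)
The identity is a form of the bipolar theorem for closed convex cones, and the plan is to prove the two inclusions separately. Write $K:=\{x\in\R^N:\ x\cdot e\ge 0 \hbox{ for all } e\in\mc{L}(\mc{A})\}$; it is a closed convex cone. Two degenerate cases will be set aside first. If $\mc{A}=\emptyset$, then $\mc{L}(\mc{A})=\Sph$, so $K=\{0\}=\mc{C}(\emptyset)$. If instead $\mc{C}(\mc{A})$ is convex and contains $a$ and $-a$, for instance when $\mc{C}(\mc{A})=\R^N$, then $\mc{L}(\mc{A})$ may be empty; the convention then gives $K=\R^N$, and one checks below that the argument still works.

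For the inclusion $\mc{C}(\overline{\mc{A}})\subset K$, take $x=ra$ with $r\ge0$ and $a\in\overline{\mc{A}}$. Choose $a_n\in\mc{A}$ with $a_n\to a$. For every $e\in\mc{L}(\mc{A})$ we have $a_n\cdot e\ge0$, and letting $n\to+\infty$ gives $a\cdot e\ge0$, hence $x\cdot e\ge0$. This direction uses neither convexity nor anything specific to $\mc{A}$.

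For the reverse inclusion, first note that $\mc{C}(\overline{\mc{A}})=\overline{\mc{C}(\mc{A})}$, which follows by normalizing. If $x_n=r_na_n\to x\neq0$, then $r_n=|x_n|\to|x|$ and $a_n\to \hat x$, so $\hat x\in\overline{\mc{A}}$. The point $0$ belongs to both sets. Consequently $\mc{C}(\overline{\mc{A}})$ is a closed convex cone, being the closure of a convex set.

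Now take $x\notin\mc{C}(\overline{\mc{A}})$. By Hahn--Banach, in the form of strict separation of a point from a closed convex set, there are $\nu\in\Sph$ and $\alpha\in\R$ with $x\cdot\nu<\alpha\le y\cdot\nu$ for all $y\in \mc{C}(\overline{\mc{A}})$. Taking $y=0$ gives $\alpha\le0$. Replacing $y$ by $sy$ with $s\to+\infty$ forces $y\cdot\nu\ge0$ for every $y$ in the cone. Hence $\nu\cdot a\ge0$ for all $a\in\mc{A}$, that is, $\nu\in\mc{L}(\mc{A})$, while $x\cdot\nu<0$. So $x\notin K$, and this proves $K\subset\mc{C}(\overline{\mc{A}})$. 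When $\mc{L}(\mc{A})=\emptyset$, the same argument shows that no point lies outside $\mc{C}(\overline{\mc{A}})$, so that set is all of $\R^N=K$.

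The only substantive step is the separation argument, and within it the detail that needs care is checking that the cone property makes the separating functional nonnegative on the cone and negative at $x$.
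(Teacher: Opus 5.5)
Your proof is correct and follows essentially the same route as the paper's. Both arguments identify $\mc{C}(\overline{\mc{A}})$ with the closed convex cone $\overline{\mc{C}(\mc{A})}$, separate a point outside it, and use the cone property to place the separating direction in $\mc{L}(\mc{A})$ while it is negative at that point. The only difference is that the paper builds the separating direction explicitly from the metric projection $y_0$, via the identity $(y_0-x_0)\cdot y_0=0$, whereas you invoke strict separation directly, which lets you handle the case $\mc{L}(\mc{A})=\emptyset$ without the paper's preliminary reduction to $\mc{A}\neq\Sph$.
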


\begin{proof}
When $\mc{A}=\Sph$, the desired conclusion holds at once, since $\mc{C}(\Sph)=\R^N$ and $\mc{L}(\Sph)=\emptyset$. One can then assume in the sequel that $\mc{A}\subsetneqq\Sph$, and the convexity of~$\mc{C}(\mc{A})$ then implies that $\mc{L}(\mc{A})\neq\emptyset$. 

It is obvious that 
$$\mc{C}(\overline{\mc{A}})\subset \big\{x\in\R^N:\, \forall\,e\in\mathcal{L}(\mc{A}),\ x\cdot e\ge 0\big\}.$$

For the reversed inclusion, assume by contradiction that there is $x_0\in\R^N$ such that
\begin{equation}\label{x0}
\inf_{e\in\mathcal{L}(\mc{A})} \{x_0\cdot e\}\ge 0\ \ \hbox{and}\ \ x_0\not\in\mc{C}(\overline{\mc{A}}).
\end{equation}
Notice that $\mc{C}(\overline{\mc{A}})=\overline{\mc{C}(\mc{A})}$ is closed and non-empty (it always contains the origin~$0$), whence there is $y_0\in\mc{C}(\overline{\mc{A}})$ such that $|x_0-y_0|=d(x_0,\mc{C}(\overline{\mc{A}}))>0$. Since $\mc{C}(\overline{\mc{A}})=\overline{\mc{C}(\mc{A})}$ is closed and convex, the point $y_0$ is the projection of $x_0$ onto $\mc{C}(\overline{\mc{A}})$, whence
\begin{equation}\label{PbP}
\mc{C}(\overline{\mc{A}})\subset \big\{x\in\R^N:(y_0-x_0)\cdot (x-y_0)\ge 0\big\}.
\end{equation}
Then, it follows from the definition of $\mc{C}(\overline{\mc{A}})$ and \eqref{PbP} that 
$$ry_0\in \big\{x\in\R^N:(y_0-x_0)\cdot (x-y_0)\ge 0\big\} \ \ \hbox{for all $r\ge 0$},$$
that is, $(r-1)(y_0-x_0)\cdot y_0\ge 0$ for all $r\ge 0$. As a consequence,
\begin{equation}\label{y0-x0}
(y_0-x_0)\cdot y_0=0.
\end{equation}
Therefore, by following \eqref{PbP} and \eqref{y0-x0},
$$\mc{C}(\mc{A})\subset\mc{C}(\overline{\mc{A}})\subset \big\{x\in\R^N:(y_0-x_0)\cdot x\ge 0\big\},$$
which means that $\hat{y_0-x_0}=(y_0-x_0)/|y_0-x_0|\in\mathcal{L}(\mc{A})$ and $(y_0-x_0)\cdot x_0\ge 0$ by \eqref{x0}. It follows from \eqref{y0-x0} again that  $0\le (y_0-x_0)\cdot x_0=(y_0-x_0)\cdot (x_0-y_0)$, and hence $x_0=y_0$, a contradiction with $x_0\neq y_0$. The proof of Lemma~\ref{lemma:P} is thereby complete.
\end{proof}

\begin{lemma}\label{lem:distW}
Let $K$ be a compact convex set containing a ball $B_\delta$ centered at the origin, with $\delta>0$. Then
$$\forall\,h\geq 1,\quad\min_{x\in K}\ d\big(x,\R^N\!\setminus\!(hK)\big)\geq (h-1)\delta.$$
\end{lemma}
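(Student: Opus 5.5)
The plan is to show directly that $x+B_{(h-1)\delta}\subset hK$ for every $x\in K$ and every $h\ge1$. This gives $d(x,\R^N\setminus(hK))\ge(h-1)\delta$ for each $x\in K$; the minimum over the compact set $K$ is attained because the distance function is continuous. For $h=1$ the claim is trivial, since distances are nonnegative. So I assume $h>1$.

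The key step is a convex-combination decomposition. Fix $x\in K$ and $y\in\R^N$ with $|y-x|<(h-1)\delta$. I want to write $y=hz$ with $z\in K$. I will set
$$z:=\frac{y}{h}=\frac1h\,x+\Big(1-\frac1h\Big)w,\qquad w:=\frac{y-x}{h-1}.$$
Then $|w|<\delta$, so $w\in B_\delta\subset K$. Since $x\in K$, $w\in K$, and $1/h\in(0,1)$, convexity of $K$ gives $z\in K$. Hence $y=hz\in hK$.

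It follows that the open ball $B_{(h-1)\delta}(x)$ is contained in $hK$. Therefore every point of $\R^N\setminus(hK)$ lies at distance at least $(h-1)\delta$ from $x$, which is the claim.

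There is no real obstacle here. The only points needing care are the convention $d(x,\emptyset)=+\infty$, in case $hK=\R^N$ (impossible since $K$ is compact, but harmless), and the degenerate case $h=1$.
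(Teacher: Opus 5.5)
Your proof is correct, but it takes a different and more elementary route than the paper. The paper first establishes the half-space description $K=\{x:\ x\cdot e\ge\tau(e)\ \text{for all } e\in\Sph\}$, with $\tau(e)=\min_{x\in K}x\cdot e$, via a projection/separation argument that uses the closedness of $K$. It then describes $hK$ by replacing $\tau$ with $h\tau$. Since $B_\delta\subset K$ forces $\tau(e)\le-\delta$, every $x\in K$ satisfies each defining inequality of $hK$ with margin $(h-1)\delta$, and the distance bound follows because $y\mapsto y\cdot e$ is $1$-Lipschitz.

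You instead argue directly on the primal side. Writing $y/h=\frac1h\,x+\big(1-\frac1h\big)\frac{y-x}{h-1}$ as a convex combination of $x$ and a point of $B_\delta$ gives $B_{(h-1)\delta}(x)\subset hK$ in one line. Your version is shorter and uses only the convexity of $K$ and $B_\delta\subset K$, with no closedness and no separation theorem; compactness serves only to make the minimum attained. The paper's support-function formulation fits the Wulff-shape, half-space viewpoint ($\W_0=\bigcap_{e}\mc{H}_e$) used elsewhere in the paper, but it yields nothing more for this lemma.
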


\begin{proof}
For any $e\in\mathbb{S}^{N-1}$, let $\tau(e):=\min_{x\in K}x\cdot e$, which is a well defined real number since $K\neq \emptyset$ is compact. Define
$$K'=\Big\{x\in\R^N:\inf_{e\in\mathbb{S}^{N-1}}(x\cdot e-\tau(e))\ge 0\Big\}.$$
Obviously, $K\subset K'$. Assume now by way of contradiction that $K'\setminus K\neq \emptyset$. Then, for $x_0\in K'\setminus K$, let $y_0\neq x_0$ be the projection of $x_0$ onto the closed convex set $K$. One has
$$\hat{y_0-x_0}\cdot(x-y_0)=\frac{y_0-x_0}{|y_0-x_0|}\cdot (x-y_0)\ge 0\ \hbox{ for all $x\in K$}.$$
Since $x_0\in K'$ and $x_0\neq y_0$, it follows that
$$\hat{y_0-x_0}\cdot x_0-\tau(\hat{y_0-x_0})\ge 0>-|y_0-x_0|=\hat{y_0-x_0}\cdot (x_0-y_0).$$
Hence, for all $x\in K$,
$$\tau(\hat{y_0-x_0})<\hat{y_0-x_0}\cdot y_0\le\hat{y_0-x_0}\cdot x,$$
which contradicts the definition of $\tau(\hat{y_0-x_0})$. Thus, $K=K'$.

Then, for any $h\ge1$,
$$\R^N\setminus(hK)=\Big\{x\in\R^N:\, \inf_{e\in\mathbb{S}^{N-1}}(x\cdot e-h\tau(e))< 0\Big\}.$$
We deduce from $B_\delta\subset K$ that $\tau(e)\leq -\delta$ for all $e\in\Sph$. Then, we get that, for all $x\in K$,
$$\inf_{e\in\mathbb{S}^{N-1}}(x\cdot e-h\tau(e))\ge \inf_{e\in\mathbb{S}^{N-1}}(x\cdot e-\tau(e))+ (h-1)\delta\ge (h-1)\delta.$$
The result follows.
\end{proof}

The last result of this section provides one with two  exterior polyhedral approximations of a given convex compact set~$K$, see Figure~\ref{fig:approximation} below.

\begin{figure}[ht]
\centering
{\includegraphics[width=.45\linewidth]{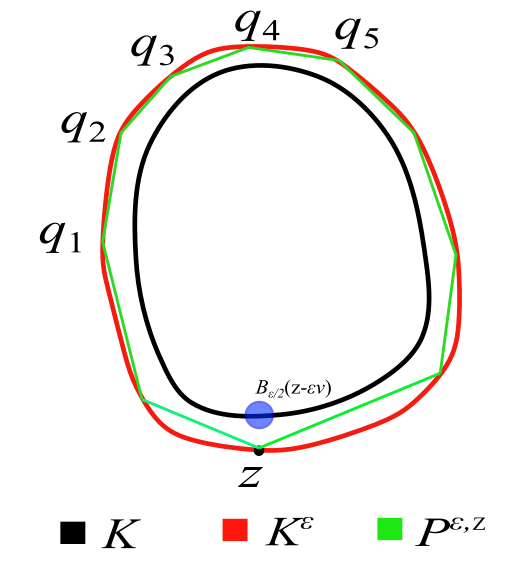}}
\caption{The approximations $K^\e$ and $P^{\e,z}$ of $K$.}
\label{fig:approximation}
\end{figure}

\begin{lemma}\label{lemma:approximation}
Let $K$ be a non-empty convex compact set, and let $\e>0$.
\begin{itemize}
\item[(i)] The set $K^\e:=K+\overline{B_{\e}}\supset K$ is a convex compact set with boundary of class~$C^1$; moreover, $K^\e \rightarrow K$ as $\e\rightarrow 0$, in the sense that $d_H(K^\e,K)\to0$ as $\e\to0$;
\item[(ii)]
There exists $l\in\N$, depending on $K,\e,N$, and a compact polyhedron $P^\e$ with at most $l$ faces such that
\be\label{KPK}
K^{\e/2}\subset P^\e\subset K^\e;
\ee
there are then some $e_i\in\mathbb{S}^{N-1}$ and $a_i\in\R$ such that~$P^\e$ can then be written as
\be\label{Pe}
P^\e=\bigcap_{i=1}^l\big\{x\in\R^N:\, x\cdot e_i\le a_i\big\};
\ee
\item[(iii)] There exists $m\in\N$, depending on $K,\e,N$, 
such that, for any $z\in\partial K^\e$, there is a compact polyhedron $P^{\e,z}$ with at most $m$ faces such that
\be\label{Pez1}
B_{\e/2}(z-\e \nu)\subset K^{\e/2}\subset P^{\e,z} \subset K^\e\ \hbox{ and }\ z\in P^{\e,z},
\ee
where $\nu$ is the exterior unit normal of $\partial K^\e$ at $z$; the polyhedron $P^{\e,z}$ can then be written as
\be\label{Pez2}
P^{\e,z}=\bigcap_{i=1}^m\big\{x\in\R^N:\, x\cdot f_i\le b_i\big\},
\ee
for some $f_i\in\mathbb{S}^{N-1}$ and $b_i\in\R$ depending on $z$.
\end{itemize}
\end{lemma}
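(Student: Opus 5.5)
For (i), I would use that the Minkowski sum of two convex compact sets is convex and compact. The inclusion $K\subset K^\e$ is clear, and $d_H(K^\e,K)\le\e$ since every point of $K^\e$ lies within $\e$ of $K$. For the $C^1$ regularity of $\partial K^\e$, I would write $\partial K^\e=\{x:\,d(x,K)=\e\}$. The function $d(\cdot,K)$ is $C^1$ on $\R^N\setminus K$ with gradient $(x-\pi_K(x))/d(x,K)$, where $\pi_K$ is the (1-Lipschitz) projection onto $K$. This gradient is continuous and has unit length. The implicit function theorem then shows that the level set $\{d(\cdot,K)=\e\}$ is a $C^1$ hypersurface, with exterior unit normal $\nu(z)=(z-\pi_K(z))/\e$.

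For (ii), I would use compactness. For each $e\in\Sph$, let $h_K(e)=\max_{x\in K}x\cdot e$ be the support function. Then $K^\eta=\bigcap_{e\in\Sph}\{x\cdot e\le h_K(e)+\eta\}$ for every $\eta\ge0$, by the same argument as for $K=K'$ in the proof of Lemma~\ref{lem:distW}.

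Set $a(e):=h_K(e)+3\e/4$. Since $h_K$ is Lipschitz with constant $R:=\max_K|x|$, I would pick a finite $\delta$-net $\{e_1,\dots,e_l\}$ of $\Sph$ and define $P^\e$ by~\eqref{Pe} with $a_i=a(e_i)$.

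The inclusion $K^{\e/2}\subset P^\e$ holds because $h_{K^{\e/2}}(e_i)=h_K(e_i)+\e/2<a_i$.

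For the inclusion $P^\e\subset K^\e$, take $x\in P^\e$ and any $e\in\Sph$, and choose $e_i$ with $|e-e_i|<\delta$. Then
$$x\cdot e\le a_i+|x|\delta\le h_K(e)+R\delta+3\e/4+|x|\delta.$$
This is a bootstrap, since it requires an a priori bound on $|x|$. I would handle it as follows. First, including the $2N$ coordinate directions in the net shows that $P^\e$ is bounded by a constant $M$ depending only on $K$ and $\e$. Then choosing $\delta\le \e/(4(R+M))$ gives $x\cdot e\le h_K(e)+\e$, hence $x\in K^\e$. The resulting $P^\e$ is compact, and $l$ depends only on $K,\e,N$ through $\delta$.

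For (iii), fix $z\in\partial K^\e$ with normal $\nu$. The first inclusion in~\eqref{Pez1} follows from $z-\e\nu=\pi_K(z)\in K$, so that $B_{\e/2}(z-\e\nu)\subset K^{\e/2}$.

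The polyhedron $P^{\e,z}$ must contain $z$ itself, which lies on $\partial K^\e$. So I would add to the polyhedron of (ii) the single constraint $x\cdot\nu\le z\cdot\nu=h_K(\nu)+\e$, which is a supporting half-space of $K^\e$ at $z$. The problem is that the other constraints $x\cdot e_i\le a_i$ may cut off $z$.

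To fix this, I would enlarge the slack only near $\nu$. Replace each $a_i$ by $\max\big(a_i,\,z\cdot e_i\big)$, and keep the constraint with normal $\nu$. With this choice $z\in P^{\e,z}$ trivially, and $K^{\e/2}$ is still included, since the right-hand sides only increased.

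The main obstacle is to show that $P^{\e,z}\subset K^\e$ persists. This needs the net to be fine at a scale that is uniform in $z$. I would write $z\cdot e_i=h_K(\nu)\,\nu\cdot e_i+\e\,\nu\cdot e_i+\pi_K(z)\cdot(e_i-(\nu\cdot e_i)\nu)$. This gives $z\cdot e_i\le h_{K^\e}(e_i)$. Combined with the constraint in the direction $\nu$ and a finer net (again with $\delta$ chosen in terms of $R$, $M$ and $\e$ only), the same support-function estimate as in (ii) should give $x\cdot e\le h_K(e)+\e$ for all $e$. The number of faces is then at most $m=l+1$, independent of $z$.
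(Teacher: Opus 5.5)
Your parts (i) and (ii) are correct. The distance-function/implicit-function argument and the explicit support-function net are valid alternatives to the paper's interior/exterior ball condition and finite-subcover argument.

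The gap is in (iii), at the step you yourself call the main obstacle. The inclusion $P^{\varepsilon,z}\subset K^\varepsilon$ is only asserted (``should give''), and the estimate you point to cannot deliver it. In (ii), the bound
$x\cdot e\le a_i+|x|\,|e-e_i|\le h_K(e)+\tfrac34\varepsilon+(R+M)\delta$
closes only because of the margin $\varepsilon/4$ between $a_i$ and $h_{K^\varepsilon}(e_i)=h_K(e_i)+\varepsilon$. After your modification, the constraints with normal $\nu$ and with normals $e_i$ close to $\nu$ have right-hand sides $z\cdot\nu$ and $z\cdot e_i$, so their margin is essentially zero.

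Refining the net does not help. The quantity you must beat is $h_{K^\varepsilon}(e)-z\cdot e$. It vanishes at $e=\nu$ and can do so quadratically: if $K$ is a single point it equals $\varepsilon(1-e\cdot\nu)$. By contrast, an estimate that uses one constraint at a time, with normal $f$, incurs an error of order $M|e-f|$ with $M\ge|z|$, which is linear. Take $K$ a point and $e$ halfway between $\nu$ and its nearest net neighbour. Then no single constraint yields $x\cdot e\le h_{K^\varepsilon}(e)$, for any small $\delta$.

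What is missing is a genuinely multi-constraint argument. Your enlarged constraints are hyperplanes through $z$. Let $S_z$ be the set of modified indices. If $e=\mu_0\nu+\sum_{i\in S_z}\mu_ie_i$ with all $\mu_j\ge0$, then $x\cdot e\le z\cdot e\le h_{K^\varepsilon}(e)$ on $P^{\varepsilon,z}$. You would then have to choose the net, uniformly in $z$, so that every $e\in\Sph$ either has an unmodified normal within $\delta$ (where the $\varepsilon/4$ margin works) or lies in such a cone. This is plausible, but it is real work that the proposal does not do.

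The paper avoids this entirely by switching to the vertex description:
\begin{itemize}
\item $P^\varepsilon$ has at most $l$ faces, hence at most $k=k(l,N)$ vertices (for instance $k\le\binom{l}{N}$).
\item Set $P^{\varepsilon,z}:=\mathrm{conv}(P^\varepsilon\cup\{z\})$. It contains $z$ and $K^{\varepsilon/2}$.
\item It lies in $K^\varepsilon$ simply because $K^\varepsilon$ is convex and contains $P^\varepsilon\cup\{z\}$.
\item As the convex hull of at most $k+1$ points, it has a number of faces bounded independently of $z$.
\end{itemize}
Replacing your half-space modification by this construction completes the proof.
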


\begin{proof}
(i) Clearly, $K\subset K^\e$, $K^\e \rightarrow K$ as $\e\rightarrow 0$, and it is also readily seen that $K_{\e}$ is convex and compact. Let $z\in\partial K^\e$. Then $d(z,K)=\e$ and there is $y\in \partial K$ such that $|z-y|=\e$. As for \eqref{PbP}, $y$ is the projection of $z$ onto the closed convex set $K$ and
$$K\subset\big\{x\in\R^N:x\.\nu\leq y\.\nu\big\}, \hbox{ where }\nu:=\hat{z-y}=\frac{z-y}{|z-y|}.$$
From this, it follows that 
$$\big\{x\in\R^N:x\.\nu> z\.\nu\big\}\subset \mathbb{R}^N\setminus K^\e.$$
On the other hand, $B_\e(y)\subset K^\e$. Thus, $K^\e$ fulfills both an interior and an exterior ball condition at $z$ (with balls with uniform radius $\e$ independent of $z\in\partial K^\e$). Therefore,~$\partial K^\e$~is of class $C^1$. Furthermore, at $z$, $\nu$ is the outward unit normal to $\partial K^{\e}$.

(ii) Fix $\epsilon>0$. Notice that the conclusions~(ii) and~(iii) hold at once when $N=1$, in which case the sets $K$, $K^{\e/2}$, $K^\e$, $P^\e$ and $P^{\e,z}$ are all non-empty segments, and $l=m=2$ (faces are just points in dimension $N=1$). We can assume in the sequel that $N\ge2$.

The non-empty compact convex set $K^{\e/2}$ can be written as
$$K^{\e/2}=\bigcap_{i\in I}\mc{F}_i,$$
where each $\mc{F}_i$ is a closed affine half-space and $(\partial\mc{F}_i)_{i\in I}$ is the family of all supporting hyperplanes of $K^{\e/2}$. Since $K^{\e/2}\subset{\rm{int}}(K^\e)$ and the non-empty compact set
$$K^{2\e}\cap\overline{\R^N\!\setminus\!K^{\e}}=K^{2\e}\cap(\underbrace{\R^N\!\setminus\!{\rm{int}}(K^\e)}_{\subset\,\R^N\setminus K^{\e/2}})$$
is then covered by the union $\cup_{i\in I}(\R^N\!\setminus\!\mc{F}_i)$ of open sets $\R^N\!\setminus\!\mc{F}_i$, there is a (positive) integer $l=l(K,\e,N)$ depending in general on $(K,\e,N)$ and a family $\{i_1,\cdots,i_l\}\subset I$ such that
$$K^{2\e}\cap\overline{\R^N\!\setminus\!K^\e}\ \subset\ \bigcup_{j=1}^l\,(\R^N\!\setminus\!\mc{F}_{i_j}).$$
Therefore,
$$K^{\e/2}=\bigcap_{i\in I}\mc{F}_i\ \subset\ \underbrace{\bigcap_{j=1}^l\mc{F}_{i_j}}_{=:P^\e}\ \subset\ (\R^N\!\setminus\!K^{2\e})\cup{\rm{int}}(K^\e).$$
The set $P^\e$ is a closed polyhedron of at most $l$ faces, that is,~\eqref{Pe} holds. Since $P^\e$ is convex and the open sets $\R^N\!\setminus\!K^{2\e}$ and ${\rm{int}}(K^\e)$ are disjoint, $P^\e$ is included in one of these two open sets. If $P^\e$ were included in $\R^N\!\setminus\!K^{2\e}$, then so would be $K^{\e/2}$, which is impossible since $K^{\e/2}\subset K^{2\e}$. Therefore, $P^\e\subset{\rm{int}}(K^\e)\subset K^\e$, and~\eqref{KPK} is proved.

(iii) Fix $\epsilon>0$, and consider the closed polyhedron $P^\e$ given in part~(ii). Since $P^\e$ has at most $l=l(K,\e,N)$ faces, there is an integer $k=k(K,\e,N)$ depending on~$l$ and~$N$, and therefore depending on $(K,\e,N)$, such that $P^\e$ has at most~$k$ vertices, whence there are some pairwise distinct points $q_1,\cdots,q_{k'}$ (with $k'\le k$) of $P^\e$ such that~$P^\e$ is equal to the convex hull of $\{q_1,\cdots,q_{k'}\}$.

Consider now any point $z\in\partial K^\e$, and let $P^{\e,z}$ be the convex hull of $\{q_1,\cdots,q_{k'},z\}$, that is, the convex hull of $P^\e\cup\{z\}$. The set $P^{\e,z}$ is a closed polyhedron containing~$z$. Furthermore,~$P^{\e,z}$ has at most $m=m(K,\e,N):=2^{k(K,\e,N)+1}$ faces, that is,~\eqref{Pez2} holds. 

To show~\eqref{Pez1}, observe first that $K^{\e/2}\subset P^\e\subset P^{\e,z}$. Since $P^\e\cup\{z\}\subset K^\e$ and $K^\e$ is convex, one has $P^{\e,z}\subset K^\e$. Lastly, from the proof of~(i) and the notations used there, one knows that $z-\e\nu=y\in\partial K$, where $\nu$ is the exterior unit normal to~$\partial K^\e$ at~$z$. Thus, $B_{\e/2}(z-\e\nu)=B_{\e/2}(y)\subset K^{\e/2}\subset P^{\e,z}$. The proof of Lemma~\ref{lemma:approximation} is thereby complete.
\end{proof}

%%%%%%%%%%%%%%%%%%%%%%%%%%%%%%%%%%%%%%%%%%%%%%%%%%%

\subsection{Some comparison and Liouville-type results}\label{subsection2.3}

In the following Lemmata~\ref{lemma:combu}-\ref{lemma:bistable}, we show some properties of some (planar-like) solutions based on the comparison with pulsating fronts for the ignition or bistable cases, that is, under Hypotheses~\ref{hyp:ignition} or \ref{hyp:bistable}.

\begin{lemma}\label{lemma:combu}
Assume that Hypothesis~\ref{hyp:ignition} holds. For any $\e>0$, there is $\delta_\e \in (0,\sigma)$ such that, for any $\nu\in\mathbb{S}^{N-1}$ and $0<\eta\le \delta_{\e}$, the solution $u(t,x)$ of \eqref{main} with initial datum
$\1_{\{x\cdot\nu\leq0\}}+\eta \1_{\{x\cdot\nu>0\}}$ satisfies
$$\limsup_{t\to+\infty}\Big(\sup_{x\cdot \nu\ge (c^*(\nu) +\e) t} u(t,x)\Big)\le \eta.$$
\end{lemma}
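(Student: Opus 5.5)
The idea is to dominate $u$ by a pulsating front of a slightly perturbed ignition problem. Its lower state is $\eta$ instead of $0$, and its upper state is $1+\kappa$ instead of $1$. The front's speed must then be shown to be close to $c^*(\nu)$, uniformly in $\nu$.

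\emph{Perturbed nonlinearities.} For small $\kappa>0$ and $0<\eta<\sigma/2$, I would construct a nonlinearity $\tilde f=\tilde f_{\eta,\kappa}$, periodic in $x$, with the following properties:
\begin{itemize}
\item $\tilde f(x,s)=0$ for $s\in[\eta,\sigma]$ and $\tilde f(x,1+\kappa)=0$;
\item $\tilde f\ge f$ on $\R^N\times[\eta,1+\kappa]$, recalling that $f$ is extended by $0$ above $1$;
\item after the affine change of unknown $s=\eta+(1+\kappa-\eta)v$, the function $\tilde f$ is of ignition type in the sense of Hypothesis~\ref{hyp:ignition}, with threshold $(\sigma-\eta)/(1+\kappa-\eta)$;
\item $\tilde f\to f$ in $C^{0,\alpha}$ on $\R^N\times[0,1]$ as $\eta,\kappa\to0$.
\end{itemize}
For example, take $f$ plus a small nonnegative bump supported in $[\varsigma',1+\kappa]$, with $\varsigma<\varsigma'<1$, chosen to keep monotonicity near the top. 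By \cite{BH1,X1}, for each $\nu$ there is then a pulsating front $\tilde U_\nu(x,x\cdot\nu-\tilde c(\nu)t)$ of \eqref{main} with reaction $\tilde f$, connecting $1+\kappa$ to $\eta$, with a unique speed $\tilde c(\nu)=\tilde c_{\eta,\kappa}(\nu)>0$.

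\emph{Comparison.} Set $w(t,x)=\tilde U_\nu(x,x\cdot\nu-\tilde c(\nu)t-A)$ with $A$ large. We have $w>\eta$ everywhere and $w\to1+\kappa$ uniformly as $x\cdot\nu\to-\infty$. Hence for $A$ large, $w(0,\cdot)\ge1$ on $\{x\cdot\nu\le0\}$ and $w(0,\cdot)>\eta$ elsewhere, so $w(0,\cdot)\ge u_0$. Since $\eta\le w\le 1+\kappa$ and $\tilde f\ge f$ there, $w$ is a supersolution of \eqref{main}. The comparison principle gives $u\le w$, and therefore
$$\sup_{x\cdot\nu\ge(\tilde c(\nu)+\e/2)t}u(t,x)\le\sup_{z\ge\e t/2-A}\ \sup_x\tilde U_\nu(x,z)\longrightarrow\eta\quad\text{as } t\to+\infty.$$
The lemma therefore follows once I show that $\tilde c_{\eta,\kappa}(\nu)\le c^*(\nu)+\e/2$ for all $\nu$ and all $\eta\le\delta_\e$, with $\kappa$ fixed small depending on $\e$. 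Since $f$ vanishes on $[0,\sigma]$, the same $\kappa$ and construction work for every smaller $\eta$.

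\emph{Continuity of speeds — the main obstacle.} I would argue by contradiction. Suppose there are $\eta_n,\kappa_n\to0$ and $\nu_n\to\nu$ with $\tilde c_n(\nu_n)\ge c^*(\nu_n)+\e/2$. Upper bounds on the speeds come from comparison with KPP-type upper nonlinearities, so the speeds are bounded and extract a limit $\bar c\ge c^*(\nu)+\e/2$, using the continuity of $c^*$. Normalize the fronts by a shift so that $\max_x\tilde U_n(x,0)=(1+\varsigma)/2$. Parabolic estimates then give convergence to an entire solution $\phi$ of \eqref{main} of the form $U(x,x\cdot\nu-\bar ct)$. This $\phi$ is nonincreasing in $z$ by monotonicity of the fronts, and it satisfies $0\le\phi\le1$. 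Its limits as $z\to\pm\infty$ are periodic steady states between $0$ and $1$. The normalization and the ignition structure (Hypothesis~\ref{hyp:ignition}: $f>0$ on $(\varsigma,1)$, and periodic steady states with values in $[\sigma,\varsigma]$ are excluded, cf.\ $\max f(\cdot,s)>0$) force these limits to be $1$ at $-\infty$ and some $p\le\sigma$ at $+\infty$. This $p$ is in fact the constant $0$, since the perturbed lower state is $\eta_n\to0$. Thus $\phi$ is a pulsating front for $f$ with speed $\bar c\neq c^*(\nu)$, contradicting uniqueness of the ignition speed \cite{BH1,X1}. The delicate point is excluding that the limit degenerates, for instance into an intermediate state. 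This is where the normalization and the strict positivity of $f$ on $(\varsigma,1)$ would be used. The results of Section~\ref{subsection2.3} on Liouville-type properties are of the same flavour and could replace this step.
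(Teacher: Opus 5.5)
Your overall plan is the paper's: dominate $u$ by a pulsating front of a perturbed ignition problem whose lower state is $\eta$, then reduce everything to a bound on its speed that is uniform in $\nu$. The paper uses $f_\eta(x,s)=\max(f(x,s),f(x,s+\eta))$ and the supersolution $\Phi_{\eta,\nu}+\eta$. This makes your second parameter $\kappa$ unnecessary: the shifted front already lives in $(\eta,1+\eta)$, and $f_\eta(x,\Phi_{\eta,\nu})\ge f(x,\Phi_{\eta,\nu}+\eta)$ holds by construction. The paper also does not prove the convergence of speeds by hand; it imports it from \cite[Proposition~2.6]{R1}. Your comparison step is fine.

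\textbf{The gap.} It lies in your own proof that $\tilde c_{\eta,\kappa}(\nu)\le c^*(\nu)+\e/2$. You assert that the lower state $p$ of the limit profile is $0$ ``since the perturbed lower state is $\eta_n\to0$''. This does not follow, because the fronts converge only locally uniformly. The value of $\lim_{z\to+\infty}U$ is therefore unrelated to $\lim_n\eta_n$.

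Integrating the equation over a period cell shows that any periodic steady state $p$ with $0\le p\le1$ satisfies $f(\cdot,p)\equiv0$, so $p$ is a constant in $[0,\sigma]\cup\{1\}$. Hence your normalization at $(1+\varsigma)/2$, together with $f>0$ on $(\varsigma,1)$, only forces the upper state to be $1$. Nothing prevents the part of $\tilde U_n$ lying below $\sigma$ from escaping to $z=+\infty$. For instance, if the $z$-distance between the levels $(1+\varsigma)/2$ and $\sigma$ is unbounded in $n$, the limit connects $1$ to $p=\sigma$.

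Such a limit contradicts nothing. Uniqueness of the ignition speed concerns fronts from $1$ to $0$ and says nothing about fronts from $1$ to $p>0$. For $p=\sigma$ the reaction above $p$ is of degenerate monostable type, for which one expects a whole half-line of admissible speeds. So $\bar c\ge c^*(\nu)+\e/2$ is not excluded.

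\textbf{How to close it.} Normalize instead at a level $\theta_0\in(0,\sigma)$ where the equation is linear, say $\max_x\tilde U_n(x,0)=\theta_0$.
\begin{itemize}
\item On $\{z\ge0\}$, the function $\tilde U_n-\eta_n\in(0,\theta_0]$ solves the purely linear equation and tends to $0$ as $z\to+\infty$.
\item Compare it with $\theta_0 M e^{-\lambda_n(x\cdot\nu_n-c_nt)}\varphi_n(x)$, where $k(\nu_n,\lambda_n)+c_n\lambda_n=0$ as in Case~2 of the proof of Lemma~\ref{lem:vk<e}. Use the maximum principle in the moving slabs $\{0<x\cdot\nu_n-c_nt<Z\}$.
\item This gives $\tilde U_n(x,z)-\eta_n\le\theta_0Me^{-\underline\lambda z}$. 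Here $\underline\lambda>0$ and $M$ are independent of $n$, by the same reasoning as for \eqref{lowerlambda}-\eqref{harnack}, since $c_n$ stays in a compact subset of $(0,+\infty)$.
\item Hence the limit vanishes at $z=+\infty$.
\item Suppose its upper state lay in $[\theta_0,\sigma]$. Then the limit would solve the linear equation everywhere, and the same estimate on $\{z\ge-Z\}$, with $Z\to+\infty$, would force $U(\cdot,0)\equiv0$. This contradicts the normalization.
\end{itemize}
The limit is therefore a genuine front of $f$ from $1$ to $0$ with speed $\bar c\ge c^*(\nu)+\e/2$, and uniqueness now gives the contradiction. Your sequential formulation with $\nu_n\to\nu$ also yields the uniformity in $\nu$ directly. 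Alternatively, simply invoke \cite[Proposition~2.6]{R1} as the paper does.
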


\begin{proof}
For $\eta\in(0,\sigma)$ define
$$f_\eta(x,s):=\max\big(f(x,s),f(x,s+\eta)\big)$$
(recall that $f$ is extended by $0$ outside $\R^N\times[0,1]$). Then $f_\eta$ fulfills Hypothesis~\ref{hyp:ignition} with~$\sigma$ replaced by $\sigma-\eta$.
By \cite{BH1}, in any direction $\nu\in\Sph$, there is a unique (up to shifts in time) pulsating front 
$\Phi_{\eta,\nu}(t,x)=U_{\eta,\nu}(x,x\cdot \nu -c_{\eta}(\nu) t)$ 
for the equation \eqref{main} with $f_{\eta}$ instead of $f$, and its speed $c_\eta(\nu)$ is positive. 
The same proof of \cite[Proposition~2.6]{R1} yields
$$c_{\eta}(\nu)\rightarrow c^*(\nu)\ \ \hbox{as }\;\eta\to 0.$$
Since $c_{\eta}(\nu)$ and $c^*(\nu)$ are continuous with respect to $\nu$ by~\cite[Proposition~2.2]{GHR1}, the above convergence is uniform in $\nu\in\mathbb{S}^{N-1}$.
Then, for $\e>0$, there is $\delta_{\e}\in (0,\sigma)$ such that $c_{\eta}(\nu)<c^*(\nu)+\e$ for any $\eta\in(0,\delta_{\e}]$ and $\nu\in\mathbb{S}^{N-1}$.  
The function $\tilde\Phi(t,x):=\Phi_{\eta,\nu}(t,x)+\eta$ satisfies
$$\partial_t \tilde\Phi-\dv(A(x)\nabla \tilde\Phi)-q(x)\cdot\nabla \tilde\Phi=f_\eta(x,\Phi_{\eta,\nu})
\geq f(x,\tilde\Phi),$$
that is, it is a supersolution for \eqref{main}. In addition, it is larger than $\eta$ and satisfies
$$\inf_{x\.\nu\leq0}\tilde\Phi(t,x)\geq \min_{x\in\R^N,\; z\leq0} 
U_{\eta,\nu}(x,z-c_{\eta}(\nu) t) + \eta\longrightarrow 1+\eta\quad\text{as }\;t\to+\infty.$$
Therefore, we can find $T>0$ so that 
$\tilde\Phi(T,x)\geq \1_{\{x\cdot\nu\leq0\}}+\eta \1_{\{x\cdot\nu>0\}}$ for all $x\in\R^N$. 
The conclusion then follows immediately from the parabolic comparison principle.
\end{proof}

\begin{lemma}\label{lemma:bistable}
Assume that Hypothesis~\ref{hyp:bistable} holds. Let $v:\R\times\R^N\to[0,1]$ be an entire $($defined in $\R\times\R^N$$)$ solution of~\eqref{main}, for which there exist $\nu\in\Sph$ and $c>c^*(\nu)$ such that 
\begin{equation}\label{con:v} 
v(t,x)\le \sigma\ \ \hbox{for all $t\le 0$ and $x\cdot \nu \ge c t$}.
\end{equation}
Then, $v(t,x)\equiv 0$ in $\R\times\R^N$.
\end{lemma}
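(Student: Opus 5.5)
Since $v\le\sigma$ on the moving half-space $\{x\cdot\nu\ge ct\}$ for all $t\le0$, the plan is to show that $v$ lies below a shifted pulsating front that travels at the slower speed $c^*(\nu)$ and whose shift can be sent to $+\infty$. Concretely, I will prove that for every $t_0\in\R$ and every $x$, $v(t_0,x)\le U_\nu(x,x\cdot\nu-c^*(\nu)t_0-L)+\eta$ for all large $L$ and every small $\eta>0$. Letting $L\to+\infty$ and then $\eta\to0$ gives $v\equiv0$.

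\textbf{Step 1 (supersolution).} Use the bistable structure of Hypothesis~\ref{hyp:bistable}: $f(x,\cdot)$ is non-increasing on $[0,\sigma]$ and on $[1-\sigma,1]$. I will build a Fife--McLeod type supersolution
$$\overline{w}(t,x)=U_\nu\big(x,\,x\cdot\nu-c^*(\nu)t-\xi(t)\big)+\eta\,e^{-\gamma (t-t_1)},$$
defined for $t\ge t_1$, with $\xi(t)$ bounded and chosen as in the standard periodic versions (for instance \cite{BH2,R1}). Where $U_\nu$ is close to $0$ or to $1$, the monotonicity of $f$ absorbs the perturbation. In the intermediate zone one needs $\partial_zU_\nu<0$ bounded away from $0$, which is known for bistable pulsating fronts, and this zone is compensated by $\xi'$.

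\textbf{Step 2 (initial ordering at a very negative time).} Fix $t_1\ll0$. On $\{x\cdot\nu\ge ct_1\}$ we have $v(t_1,\cdot)\le\sigma$. There I want $\overline w(t_1,\cdot)\ge v(t_1,\cdot)$, so I place the front so that it is near $1$ up to the hyperplane $x\cdot\nu=ct_1+M$. That is, I choose the shift with $-c^*(\nu)t_1-\xi(t_1)=ct_1+M$. On the region $x\cdot\nu<ct_1+M$ the front is $\ge1-\sigma'$, but $v$ may be close to $1$ there.

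To handle this, I will use instead the trick $v\le1$ together with a slightly modified comparison. The front plus $\eta$ can exceed $1$ only after a finite time. A cleaner alternative is to compare $v$ with the solution $w$ of the Cauchy problem whose initial datum at $t_1$ is $\1_{\{x\cdot\nu<ct_1+M\}}+\sigma\1_{\{x\cdot\nu\ge ct_1+M\}}$. Then $v\le w$, and $w$ is controlled by the supersolution of Step~1 started with $\eta=\sigma$. Since $v\le\sigma$ is a fixed level, I can either take $\sigma$ small enough that the supersolution construction works with $\eta=\sigma$, or first push $v$ below a smaller level $\eta$ by using the monotonicity of $f$ on $[0,\sigma]$. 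In the latter route, $f\le0$ on $[0,\sigma]$ makes the solution starting at $\sigma$ on a far half-space decay into the region $\{\max\le\eta\}$ after a time $T_\eta$, uniformly away from the boundary.

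\textbf{Step 3 (conclusion and main obstacle).} At time $t_0$, the front's position is
$$ct_1+M+c^*(\nu)(t_0-t_1)+O(1)=c^*(\nu)t_0+(c-c^*(\nu))t_1+M+O(1),$$
which tends to $-\infty$ as $t_1\to-\infty$ because $c>c^*(\nu)$. Hence for each fixed $x$, $\overline w(t_0,x)\to\eta\,e^{-\gamma(t_0-t_1)}\to0$, so $v(t_0,x)=0$.

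I expect the main obstacle to be Step~1/2: producing a supersolution for pulsating fronts in the weakly bistable setting, where $f$ is only monotone near $0$ and $1$ and not strictly decreasing. This rules out an exponential decay rate $\gamma>0$ from $f_s<0$, so one may have to take $\gamma=0$ and use $\eta$ as a permanent small lift. The lift must then be absorbed by a shift $\xi(t)$ that grows at most like $o(|t_1|)$, or be handled via a perturbed nonlinearity $f_\eta$ as in Lemma~\ref{lemma:combu}, whose front speed $c_\eta(\nu)\to c^*(\nu)$ stays below $c$ for small $\eta$. I would try this last approach first: compare $v$ with $\Phi_{\eta,\nu}+\eta$ for a front of $f_\eta(x,s)=\max(f(x,s),f(x,s+\eta))$, adapted to the bistable case using the uniqueness and continuity of speeds from \cite[Proposition~2.2]{GHR1}.
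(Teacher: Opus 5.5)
\textbf{There is a genuine gap: the comparison function is never actually produced.} Your overall strategy does match the endgame of the paper's proof: trap $v$ below a front moving at a speed close to $c^*(\nu)$, placed at a very negative time $t_1$ just behind the hyperplane $x\cdot\nu=ct_1$, and let $t_1\to-\infty$ using $c>c^*(\nu)$. (Sign slip in your first paragraph: $U_\nu(x,x\cdot\nu-c^*(\nu)t_0-L)\to1$ as $L\to+\infty$; you need $+L$.) But the comparison function is the whole difficulty, and each of your candidate routes fails:
\begin{itemize}
\item The Fife--McLeod supersolution with $\gamma>0$ does not exist here, as you note yourself.
\item The Step~2 variant with lift $\eta=\sigma$ breaks your own Step~1 mechanism. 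Since $\phi_\nu+\sigma$ leaves $[0,\sigma]$ wherever $\phi_\nu>0$, the zone where the monotonicity of $f$ gives no sign is unbounded in $z=x\cdot\nu-c^*(\nu)t$. There $\partial_zU_\nu$ is not bounded away from $0$. You also cannot shrink $\sigma$, since $v\le\sigma$ is the given assumption.
\item The route you would try first compares with $\Phi_{\eta,\nu}+\eta$ for $f_\eta(x,s)=\max(f(x,s),f(x,s+\eta))$. This presupposes that $f_\eta$ has pulsating fronts in direction $\nu$ with $c_\eta(\nu)\to c^*(\nu)$. In Lemma~\ref{lemma:combu} this comes from \cite{BH1} because $f_\eta$ is of ignition type. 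Under Hypothesis~\ref{hyp:bistable} nothing of the sort is available: existence of bistable pulsating fronts in periodic media is not automatic, which is why it is postulated for $f$ itself. Also, \cite[Proposition~2.2]{GHR1} gives uniqueness and continuity of speeds, not existence.
\end{itemize}

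\textbf{How to close the gap within your framework.} Keep a constant lift and let the shift grow linearly. Take $0<\eta\le\sigma/2$ and let $L$ be the Lipschitz constant of $f$ in $s$. Set $\kappa:=\min\{\partial_t\phi_\nu(t,x):\sigma/2\le\phi_\nu(t,x)\le1-\sigma\}$. This is positive by periodicity once $\partial_t\phi_\nu>0$, i.e.\ monotonicity of the bistable front, which follows from the usual sliding argument. Put $K:=L/\kappa$. Then $\phi_\nu\big(t+K\eta(t-t_1)+s_1,x\big)+\eta$ is a supersolution on $[t_1,+\infty)\times\R^N$, moving at speed $c^*(\nu)(1+K\eta)$. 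For $\eta$ small this is $<c$. So your requirement that the shift be $o(|t_1|)$ is unnecessary: the margin $c-c^*(\nu)>0$ absorbs a drift of slope $O(\eta)$.

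To order this supersolution above $v$ at time $t_1$, you need $v(t_1,\cdot)\le\eta$ on $\{x\cdot\nu\ge ct_1+\rho_\eta\}$, with $\rho_\eta$ independent of $t_1\le0$. This is exactly the first step of the paper's proof. It uses $\min_{\R^N}f(\cdot,s)<0$ for $s\in(0,\sigma]$, not merely $f\le0$ on $[0,\sigma]$. Indeed, the periodic solution starting from the constant $\sigma$ decreases to a periodic steady state. By the maximum principle that state is a constant $s$ with $f(\cdot,s)\equiv0$, and $s=0$ only thanks to this strict negativity. Then letting $t_1\to-\infty$ and $\eta\to0$ gives $v\equiv0$.

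\textbf{What the paper does instead.} It avoids building any supersolution. After the same decay step, it applies the comparison lemma \cite[Lemma~2.2]{R1} in the frame moving with speed $c^*(\nu)$ to get $v\le\phi_\nu$ on all of $\R\times\R^N$. It then uses the invariance of~\eqref{con:v} under $(t,x)\mapsto(t+t_0,x+[ct_0\nu])$ and lets $t_0\to-\infty$.
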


\begin{proof}
We first show that 
\be\label{lim0}
\lim_{\rho\rightarrow +\infty}\Big(\sup_{\su{t\le0}{x\cdot \nu-ct\ge \rho}}v(t,x)\Big)=0.
\ee
Assume by contradiction that there exist $\delta_1>0$ and sequences $\{t_n\}_{n\in\mathbb{N}}$ of $\R$, $\{x_n\}_{n\in\mathbb{N}}$ of $\R^N$ such that $t_n\le 0$, $v(t_n,x_n)\ge \delta_1$, and $x_n\cdot \nu-ct_n\rightarrow +\infty$ as $n\rightarrow+\infty$. Define $v_{n}(t,x):=v(t+t_n,x+x_n)$ for $(t,x)\in\R\times\R^N$. Then, 
\be\label{vn}
v_n(0,0)\ge \delta_1\hbox{ and } v_n(t,x)\le \sigma \hbox{ for all $t\le -t_n$ and $x\cdot \nu\ge ct-(x_n\cdot \nu -c t_n)$}.
\ee 
Denote $x_n=y_n+z_n$ where $y_n\in [0,1]^N$ and $z_n\in \mathbb{Z}^N$. Then, there is $y_0\in [0,1]^N$ such that $y_n\rightarrow  y_0$ as $n\rightarrow +\infty$ up to extraction of a subsequence. By parabolic estimates, the sequence $\{v_{n}\}_{n\in\N}$ converges as $n\rightarrow +\infty$, up to extraction of a subsequence, locally uniformly to an entire solution $v_{\infty}:\R\times\R^N\to[0,1]$ of a translation of \eqref{main}, that is, 
$$  \partial_t v_{\infty}=\dv(A(x+y_0)\nabla v_{\infty}) +q(x+y_0)\cdot\nabla v_{\infty} + f(x+y_0,v_{\infty}).$$
By \eqref{vn} and $\lim_{n\to+\infty}(x_n\cdot\nu-ct_n)=+\infty$, one also has that $v_{\infty}(0,0)\ge\delta_1$ and $0\le v_{\infty}(t,x)\le \sigma$ for all $t\le 0$ and $x\in\R^N$. Since $\min_{x\in\R^N} f(x,s)<0$ for every $s\in (0,\sigma]$, it then follows easily from the maximum principle, as in~\cite[Lemma~4.2]{GHR1}, that $v_{\infty}(t,x)\equiv 0$, which contradicts $v_{\infty}(0,0)\ge \delta_1$. Thus,~\eqref{lim0} is proved.

Let $\overline{v}(t,x):=\phi_{\nu}(t ,x+c^*(\nu)t\nu)$ and $\underline{v}(t,x):=v(t,x+c^*(\nu)t\nu)$, where $\phi_{\nu}$ is given in Hypothesis~\ref{hyp:bistable}. They satisfy
$$\partial_t u=\dv(A(x+c^*(\nu)t\nu)\nabla u) +c^*(\nu) \nu \cdot\nabla u +q(x+c^*(\nu)t\nu)\cdot \nabla u+ f(x+c^*(\nu)t\nu,u)$$
in $\R\times\R^N$, and all conditions in \cite[Lemma~2.2]{R1} can be verified.\footnote{The functions $\underline{v}$ and $\overline{v}$ are of class $C^{1+\delta/2,2+\delta}(\R\times\R^N)$ for every $0<\delta<1$ by standard parabolic estimates. This regularity is sufficient to apply \cite[Lemma~2.2]{R1} even if in our paper the function $f$ (H\"older continuous in $x$, Lipschitz continuous in $u$) is slightly less regular than the function $f$ (Lipschitz continuous in all variables) used in \cite[Lemma~2.2]{R1}.} Then, it follows from \cite[Lemma~2.2]{R1} that $0\le\underline{v}(t,x)\le \overline{v}(t,x)$ for $(t,x)\in(-\infty,0]\times\R^N$. The comparison principle yields that
$$v(t,x)\le \phi_{\nu}(t,x)=U_{\nu}(x,x\cdot\nu -c^*(\nu) t)\ \hbox{ for all $(t,x)\in \R\times\R^N$}.$$

For any $t_0<0$, there is $y_0=y_0(t_0)\in [-1,1]^N$ such that $[ct_0\nu]:=ct_0\nu-y_0\in \mathbb{Z}^N$ and $y_0\cdot\nu\le0$. 
Since the new function $V:(t,x)\mapsto V(t,x):=v(t+t_0,x+[ct_0 \nu])$ still satisfies \eqref{main} and~\eqref{con:v}, it follows from the arguments of the previous paragraph that $0\le v(t+t_0,x+[ct_0 \nu])\le \phi_{\nu}(t,x)=U_{\nu}(x,x\cdot\nu -c^*(\nu) t)$ for all $(t,x)\in \R\times\R^N$, that is,
$$\baa{rcl}
0\le v(t,x) & \le & U_{\nu}(x-[ct_0 \nu],x\cdot\nu -c^*(\nu) t+(c^*(\nu) -c)t_0+y_0\cdot \nu)\vspace{3pt}\\
& = & U_{\nu}(x,x\cdot\nu -c^*(\nu) t+(c^*(\nu) -c)t_0+y_0\cdot \nu),\eaa$$
for all $(t,x)\in\R\times\R^N$. By taking $t_0\rightarrow -\infty$ and using $c>c^*(\nu)$ and $U_\nu(\cdot,+\infty)=0$, one concludes that $v(t,x)=0$ for all $(t,x)\in\R\times\R^N$.
\end{proof}

%%%%%%%%%%%%%%%%%%%%%%%%%%%%%%%%%%%%%%%%%%%%%%%%%%%
%%%%%%%%%%%%%%%%%%%%%%%%%%%%%%%%%%%%%%%%%%%%%%%%%%%

\section{Lower estimates on the upper-level sets}\label{section3}

This section is devoted to the proofs of Lemma~\ref{lem:subset} and Proposition~\ref{prop:subset} below, which provide some lower estimates for the solutions of~\eqref{main} in some suitable time-dependent sets. The proofs themselves rely on Proposition~\ref{pro:rho} and a superposition principle.

\begin{lemma}\label{lem:subset}
Assume that $U_{\rho}\neq \emptyset$, where $\rho>0$ is given in Proposition~\ref{pro:rho}. For any $0<\tau<1$, it holds
$$\inf_{x\in U_{\rho} +t\tau \W_0} u(t,x)\rightarrow 1\ \hbox{ as $t\rightarrow +\infty$}.$$
\end{lemma}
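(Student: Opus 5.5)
The plan is to compare $u$ from below with solutions started from single balls, and to use periodicity to make the convergence uniform in the center of the ball. For every $x_0\in U_\rho$ one has $B_\rho(x_0)\subset U$ up to its boundary, which has measure zero, so $u_0=\1_U\ge \1_{B_\rho(x_0)}\ge\theta\1_{B_\rho(x_0)}$. The parabolic comparison principle then gives $u(t,x)\ge v_{x_0}(t,x)$, where $v_{x_0}$ solves \eqref{main} with initial datum $\1_{B_\rho(x_0)}$. This is the superposition principle: $u\ge\sup_{x_0\in U_\rho}v_{x_0}$. It therefore suffices to show that
$$\inf_{x_0\in U_\rho}\ \inf_{y\in t\tau\W_0} v_{x_0}(t,x_0+y)\to1\quad\text{as }t\to+\infty.$$

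The first step reduces the supremum over $x_0$ to a compact family. Write $x_0=k+z$ with $k\in\Z^N$ and $z\in[0,1)^N$. Since the coefficients are $\Z^N$-periodic, $v_{x_0}(t,x)=v_{z}(t,x-k)$, where $v_z$ has initial datum $\1_{B_\rho(z)}$. Moreover $B_\rho(z)\supset B_{\rho'}(0)$ does not hold in general, so I will instead use a slightly smaller ball. Proposition~\ref{pro:rho} is applied with $\rho$ fixed, so I keep $\rho$ and handle the translation parameter $z$ differently. Each $v_z$ is invading with compactly supported initial datum, so by the result of~\cite{R1} quoted in \eqref{w0}-\eqref{W0} it has spreading set $\W_0$. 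Since $\overline{\tau\W_0}$ is a compact subset of $\W_0$, as $\W_0$ is convex, open and contains $0$, property \eqref{subset} gives $\min_{y\in t\tau\W_0}v_z(t,z+y)\to1$ for each fixed $z$.

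The main obstacle is uniformity in $z\in[0,1]^N$. I would obtain it as follows. Fix $\tau<\tau'<1$ and $\e>0$. Pick a finite $\delta$-net $\{z_j\}$ of $[0,1]^N$. For the solution $\tilde v_j$ with datum $\theta\1_{B_{\rho}(z_j)}$ there is $T_j$ such that $\tilde v_j\ge1-\e$ on $\{z_j+t\tau'\W_0\}$ for $t\ge T_j$. Then I would use continuous dependence in $L^1_{loc}$, or more robustly an intermediate time. For any $z$, by parabolic continuity at a fixed small time $t_1>0$ combined with the strong maximum principle, $v_z(t_1,\cdot)\ge\theta\1_{B_\rho(z_j)}$ for a nearby $z_j$. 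This requires enlarging $\rho$ beforehand: I apply the argument with the ball $B_{\rho}$ replaced by a smaller ball contained in $B_\rho(z)$ for all $z$ close to $z_j$.

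A cleaner route is compactness. Suppose, by contradiction, that there are $z_n\to z_\infty$, $t_n\to\infty$ and $y_n\in t_n\tau\W_0$ with $v_{z_n}(t_n,z_n+y_n)\le1-\e$. At time $1$, $v_{z_n}(1,\cdot)\to v_{z_\infty}(1,\cdot)$ uniformly, and $v_{z_\infty}(1,\cdot)\ge 2\theta'$ on $\overline{B_{\rho}(z_\infty)}$ for some $\theta'$. Hence the data at time $1$ dominate $\theta\1_{B_{\rho}(z_\infty)}$ for large $n$, provided $\theta$ is below $\min_{\overline{B_\rho(z_\infty)}} v_{z_\infty}(1,\cdot)$. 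If it is not, I would replace $\theta$ by a larger value while keeping the ball, which is allowed since any larger $\theta$ still satisfies Proposition~\ref{pro:rho}. Comparison then gives $v_{z_n}(t_n,\cdot)\ge w(t_n-1,\cdot)$, where $w$ has datum $\theta\1_{B_\rho(z_\infty)}$. The function $w$ is invading, so its spreading set is $\W_0$. Since $|z_n-z_\infty|$ is bounded and $\tau<\tau'$, the points $z_n+y_n$ lie in $z_\infty+(t_n-1)\tau'\W_0$ for large $n$, using that $\tau\W_0+B_R\subset\tau'\W_0\,s$ for large $s$. This yields $w(t_n-1,z_n+y_n)\to1$, a contradiction. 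Undoing the integer translation $k$ completes the argument.
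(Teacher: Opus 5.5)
The overall architecture is sound and close to the paper's. You use superposition over the balls $B_\rho(x_0)$, $x_0\in U_\rho$, and reduce the centers to $z\in[0,1]^N$ by $\Z^N$-periodicity. You then compare, after a time lag, with a single invading solution $w$ issued from $\theta\1_{B_\rho(z_\infty)}$, whose spreading set $\W_0$ together with the margin $\tau<\tau'$ absorbs the bounded shifts. The paper gets uniformity in $z$ from a finite subcover of $[0,1]^N$ rather than by contradiction, which is a cosmetic difference.

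\textbf{The gap.} The step producing $v_{z_n}(1,\cdot)\ge\theta\1_{B_\rho(z_\infty)}$ does not work. Nothing guarantees $\theta\le\min_{\overline{B_\rho(z_\infty)}}v_{z_\infty}(1,\cdot)$. The constant $\theta$ is fixed by Proposition~\ref{pro:rho}, whereas after a unit time the solution issued from $\1_{B_\rho(z_\infty)}$ is in general well below $1$ near $\partial B_\rho(z_\infty)$, possibly below $\theta$. Your remedy goes the wrong way. A larger $\theta$ is still admissible in Proposition~\ref{pro:rho}, but it makes the required inequality $v_{z_n}(1,\cdot)\ge\theta\1_{B_\rho(z_\infty)}$ harder to satisfy. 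Decreasing $\theta$ or $\rho$ is not allowed, since invasion is then no longer guaranteed. The small-time idea in your third paragraph fails for the same reason.

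\textbf{The fix.} Use the invasion property of $v_{z_\infty}$ itself instead of the fixed time $1$. Since $\1_{B_\rho(z_\infty)}\ge\theta\1_{B_\rho(z_\infty)}$, Proposition~\ref{pro:rho} gives $v_{z_\infty}(t,\cdot)\to1$ locally uniformly. Hence there are $T=T(z_\infty)>0$ and $\theta'\in(\theta,1)$ with $v_{z_\infty}(T,\cdot)\ge\theta'$ on $\overline{B_\rho(z_\infty)}$. Continuous dependence on the center at the fixed time $T$ then gives $v_{z_n}(T,\cdot)\ge\theta\1_{B_\rho(z_\infty)}$ for $n$ large, hence $v_{z_n}(t_n,\cdot)\ge w(t_n-T,\cdot)$. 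The rest of your argument goes through verbatim with $t_n-1$ replaced by $t_n-T$. This is exactly the role of the times $T_{y_0}$ and the constant $\theta'\in(\theta,1)$ in the paper's proof.
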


\begin{proof}
Fix any $\lambda\in(0,1)$, $\tau'\in(\tau,1)$, and $\theta'\in(\theta,1)$, where $\theta\in(0,1)$ is given in Proposition~\ref{pro:rho}. For any $y_0\in\R^N$, let $v_{y_0}$ be the solution of~\eqref{main} with initial datum $v_{y_0}(0,\cdot):=\1_{B_\rho(y_0)}$. By Proposition~\ref{pro:rho}, there is $T_{y_0}>0$ such that $v_{y_0}(T_{y_0},\cdot)\ge\theta'\,\1_{B_{\rho}}$ in~$\R^N$, whence there is $\e_{y_0}>0$ such that $v_y(T_{y_0},\cdot)\ge\theta\,\1_{B_{\rho}}$ in $\R^N$ for all $y\in B_{\e_{y_0}}(y_0)$. By compactness of $[0,1]^N$, there is then a finite number of points $y_1,\cdots,y_k$ in $[0,1]^N$ such that
$$[0,1]^N\subset\bigcup_{i=1}^kB_{\e_{y_i}}(y_i).$$
Call
$$T':=\max(T_{y_1},\cdots,T_{y_k})\in(0,+\infty).$$
Therefore,
\be\label{T'y}
\forall\,y\in[0,1]^N,\ \exists\,T'_y\in(0,T'],\ \ v_y(T'_y,\cdot)\ge\theta\,\1_{B_\rho}\hbox{ in $\R^N$}.
\ee

Let now $w$ be the solution of~\eqref{main} with initial condition $w(0,\cdot):=\theta\,\1_{B_\rho}$. By Proposition~\ref{pro:rho} and~\cite{R1}, $w$ admits $\W_0$ as spreading set. Since $\tau'\W_0\Subset\W_0$ (i.e. $\tau'\W_0$ has compact closure included in the open set $\W_0$), there is then a time $\overline{T}>0$ such that
$$\forall\,s\ge\overline{T},\ \forall\,z\in\W_0,\ \ w(s,s\tau'z)\ge\lambda.$$
Since $\tau\W_0\Subset\tau'\W_0$, there is then a time $T\ge T'$ such that
\be\label{Tw}
\forall\,t\ge T,\, \forall\,s'\in(0,T'],\ \forall\,z\in\W_0,\ \forall\,y\in[0,1]^N,\ \ w(t-s',t\tau z+y)\ge\lambda.
\ee

Finally, consider any $x_0\in U_\rho$. Let $y_0\in[0,1]^N$ be such that $y_0-x_0\in\Z^N$. Since $u_0=\1_U\ge\1_{B_\rho(x_0)}=\1_{B_\rho(y_0)}(\cdot+y_0-x_0)=v_{y_0}(0,\cdot+y_0-x_0)$ and the coefficients of~\eqref{main} are $\Z^N$-periodic, the comparison principle implies that $u(t,x)\ge v_{y_0}(t,x+y_0-x_0)$ for all $(t,x)\in[0,+\infty)\times\R^N$. In particular, by using~\eqref{T'y}-\eqref{Tw} and the comparison principle again, one gets that
$$\forall\,t\ge T,\ \forall\,z\in\W_0,\ \ u(t,x_0+t\tau z)\ge v_{y_0}(t,t\tau z+y_0)\ge w(t-T'_{y_0},t\tau z+y_0)\ge\lambda.$$
Since $\lambda\in(0,1)$ was arbitrary and $u\le1$ in $[0,+\infty)\times\R^N$, the proof is~complete.
\end{proof}

The previous lemma, combined with compactness and limit arguments, implies that the set $\mc{C}(\mc{U}(U_{\rho}))+\W_0$ is a spreading subset for $u$, in the sense given in the last sentence of Definition~\ref{def:W}.

\begin{proposition}\label{prop:subset}
Assume one of Hypotheses~\ref{hyp:KPP}-\ref{hyp:bistable}, and that $U_{\rho}\neq \emptyset$, with $\rho>0$ given in Proposition~\ref{pro:rho}. Then, the set $\mc{C}(\mc{U}(U_{\rho}))+\W_0$ is a spreading subset for $u$. 
\end{proposition}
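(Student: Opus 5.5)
We have to show that for every non-empty compact $C\subset \mc{C}(\mc{U}(U_\rho))+\W_0$, $\min_{x\in C}u(t,tx)\to1$ as $t\to+\infty$. The plan is to reduce this to Lemma~\ref{lem:subset}: we show that, for some $\tau\in(0,1)$ and all large $t$, the dilated set $tC$ is contained in $U_\rho+t\tau\W_0$. Since $0\le u\le 1$, Lemma~\ref{lem:subset} then gives the claim.

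\emph{Step 1: shrinking $C$ into $\tau$-dilations.} Each $x\in C$ can be written $x=z+w$ with $z\in\mc{C}(\mc{U}(U_\rho))$ and $w\in\W_0$. Because $\W_0$ is open, convex, and contains $0$, we have $w\in\tau\W_0$ for some $\tau<1$; it can be taken of the form $\tau=1-\eta$ with $\eta$ small, since $w/(1-\eta)\in\W_0$ for small $\eta$. We then choose $\delta>0$ with $B_{2\delta}(w)\subset\tau\W_0$. The open sets $z+B_\delta(w)$ cover $C$, so compactness yields finitely many points $x_j=z_j+w_j$, radii $\delta_j$ and exponents $\tau_j$ with $C\subset\bigcup_j (z_j+B_{\delta_j}(w_j))$ and $B_{2\delta_j}(w_j)\subset\tau_j\W_0$. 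Set $\tau:=\max_j\tau_j<1$ and $\delta:=\min_j\delta_j$. Since $\tau_j\W_0\subset\tau\W_0$ by star-shapedness with respect to $0$, every $x\in C$ satisfies $x\in z_j+B_{\delta}(w_j)$ for some $j$, and $B_{\delta}(x-z_j)\subset\tau\W_0$.

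\emph{Step 2: using unbounded directions.} Write $z_j=r_j a_j$ with $r_j\ge0$ and $a_j\in\mc{U}(U_\rho)$. If $r_j=0$, pick any point $y\in U_\rho$. If $r_j>0$, the definition of $\mc{U}(U_\rho)$ gives $d(tr_ja_j,U_\rho)=o(t)$ as $t\to+\infty$, so there is $y_j(t)\in U_\rho$ with $|y_j(t)-tz_j|\le \delta t$ for $t\ge T$ (finitely many $j$). In the case $r_j=0$, $|y-tz_j|=|y|\le\delta t$ for large $t$ as well. Then for $x\in C$ with associated index $j$, $tx=y_j(t)+t\big(x-z_j+(tz_j-y_j(t))/t\big)$. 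The vector in parentheses lies in $B_\delta(x-z_j)\subset\tau\W_0$, so $tx\in U_\rho+t\tau\W_0$. Lemma~\ref{lem:subset} then concludes the proof.

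The only delicate point is the uniformity in Step 1: we need a single $\tau<1$ and a margin $\delta$ uniform over $C$, and compactness of $C$ together with openness of the covering provides these. The case $\mc{U}(U_\rho)=\emptyset$ is included, since then $\mc{C}=\{0\}$ and we simply use a fixed $y\in U_\rho$.
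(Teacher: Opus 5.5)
Your proof is correct and follows essentially the paper's route. You cover $C$ by finitely many translates $z_j+(\text{shrunken copy of }\W_0)$ with $z_j\in\mc{C}(\mc{U}(U_\rho))$, use $d(tz_j,U_\rho)=o(t)$ to get $tC\subset U_\rho+t\tau\W_0$ for large $t$, and conclude with Lemma~\ref{lem:subset}. (One harmless slip: with $\delta=\min_j\delta_j$ you only know $x\in z_j+B_{\delta_j}(w_j)$, not $x\in z_j+B_\delta(w_j)$; but $B_\delta(x-z_j)\subset B_{2\delta_j}(w_j)\subset\tau\W_0$ still holds, and that inclusion is all Step~2 uses.)
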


\begin{proof}
When $\mc{U}(U_\rho)=\emptyset$, then $\mc{C}(\mc{U}(U_{\rho}))+\W_0=\{0\}+\W_0=\W_0$ is a spreading subset of $u$, since $1\ge u_0\ge\1_{B_\rho(x_0)}$ in $\R^N$ for any $x_0\in U_\rho$ and $\W_0$ is the spreading set of the solution of~\eqref{main} with initial condition~$\1_{B_\rho(x_0)}$ (by Proposition~\ref{pro:rho} and~\cite{R1}).

One can assume in the sequel to $\mc{U}(U_\rho)\neq\emptyset$. For any $\tau>0$ and $\xi\in \mathcal{U}(U_{\rho})$, it follows from the definition of $\mc{U}(U_\rho)$ that
$$d(t\tau\xi,U_{\rho})=o(t)\ \hbox{as }t\rightarrow +\infty.$$
This implies that, for any $\eta\in (0,1)$ and any $\vartheta\in(\eta,1)$, there holds $t\tau\xi + t\eta \W_0\subset U_{\rho} +t\vartheta\W_0$ for~$t$ sufficiently large, since $\eta\W_0\Subset\vartheta\W_0$. By Lemma~\ref{lem:subset}, one then gets that
\be\label{taueta}
\inf_{x\in \tau\xi +\eta \W_0}u(t,tx)\rightarrow 1\ \hbox{ as }t\rightarrow +\infty.
\ee

Let $\W$ be the open set defined by $\W:=\mc{C}(\mc{U}(U_{\rho}))+\W_0$. For any $x\in\W$, one has $(1+\e)x\in\W$ for all $\e>0$ small enough, that is, $x\in(1+\e)^{-1}\W=\mc{C}(\mc{U}(U_{\rho}))+(1+\e)^{-1}\W_0$ for all $\e>0$ small enough, whence there are $\tau_x\ge 0$, $\xi_x\in\mathcal{U}(U_{\rho})$ and $\eta_x\in (0,1)$ such that $x\in\tau_x\xi_x+\eta_x\W_0$. By openness of $\eta_x\W_0$, one can assume without loss of generality that $\tau_x>0$.

Consider finally any non-empty compact set $C\subset \W$. It can then be covered by a finite number of  open sets $\tau\xi+\eta \W_0$ with $\tau>0$, $\xi\in\mathcal{U}(U_{\rho})$ and $\eta\in (0,1)$. It then follows from~\eqref{taueta} that
$$\lim_{t\rightarrow +\infty} \Big(\min_{x\in C} u(t,tx)\Big)=1.$$
This means that $\W=\mc{C}(\mc{U}(U_{\rho}))+\W_0$ is a spreading subset, which is the desired conclusion.
\end{proof}

%%%%%%%%%%%%%%%%%%%%%%%%%%%%%%%%%%%%%%%%%%%%%%%%%%%
%%%%%%%%%%%%%%%%%%%%%%%%%%%%%%%%%%%%%%%%%%%%%%%%%%%

\section{Upper estimates on the upper-level sets}\label{section4}

This section is the core of the paper. We prove in Proposition~\ref{prop:superset} below in Section~\ref{subsection4.2} some upper bounds of the upper-level sets of the solutions of~\eqref{main}, under assumption~\eqref{supt-plane}. The proof relies on the derivation, in Section~\ref{subsection4.1}, of some quantitative upper estimates for auxiliary and so-called retracting solutions which are initially supported on the complements of polyhedral compact sets approximating large dilations of the set~$-\W_0$.

%%%%%%%%%%%%%%%%%%%%%%%%%%%%%%%%%%%%%%%%%%%%%%%%%%%

\subsection{Retracting solutions}\label{subsection4.1}

Under one of the Hypotheses~\ref{hyp:KPP}-\ref{hyp:bistable}, let $\W_0$ be given by~\eqref{w0}-\eqref{W0-wulff}. The set $\overline{\W_0}$ is a non-empty compact convex set. If~\eqref{supt-plane} further holds, then the exterior $C^1$ and polyhedral approximations of~$\overline{\W_0}$ given in Lemma~\ref{lemma:approximation} have the following properties.

\begin{lemma}\label{lemma:app-W0}
Assume one of Hypotheses~\ref{hyp:KPP}-\ref{hyp:bistable}, let $\W_0$ be given by~\eqref{w0}-\eqref{W0-wulff}, and assume that~\eqref{supt-plane} holds. Set $K:=\overline{\W_0}$ and, for $\e>0$, let $K^\e:=\overline{\W_0}+\overline{B_\e}$, $P^\e$, and $P^{\e,z}$ be the exterior $C^1$ and polyhedral approximations of $\overline{\W_0}$ given in Lemma~\ref{lemma:approximation}, $P^{\e,z}$ being related to an arbitrary $z\in \partial K^\e$. Then, it holds:
\begin{itemize}
\item[(i)] $z\cdot \nu=c^*(\nu)+\e$, where $\nu$ is the exterior unit normal to~$\partial K^\e$ at~$z$;
\item[(ii)] $c^*(e_i)\le a_i$ for all $1\le i\le l$, resp. $c^*(f_i)\le b_i$ for all $1\le i\le m$, where~$e_i$,~$a_i$, and~$l$ are given in~\eqref{Pe} for $P^\e$, resp.~$f_i$,~$b_i$, and~$m$ are given in~\eqref{Pez2} for $P^{\e,z}$.
\end{itemize}
\end{lemma}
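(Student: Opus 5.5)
The plan is to use two facts: the support function of $K=\overline{\W_0}$ coincides with $c^*$ under~\eqref{supt-plane}, and the support function of $K^\e$ is that of $K$ plus $\e$. The key identity to prove first is
$$\hat c(e):=\max_{x\in\overline{\W_0}}x\cdot e=c^*(e)\quad\hbox{for all }e\in\Sph.$$
The inequality $\hat c(e)\le c^*(e)$ follows directly from the Wulff formulation~\eqref{W0-wulff}. Indeed, every $x\in\W_0$ satisfies $x\cdot e<c^*(e)$, and the bound passes to the closure. For the reverse inequality, I invoke~\eqref{supt-plane}. Given $e$, it provides $\xi$ with $e\cdot\xi>0$ (implicit, since $c^*(e)>0$ and the ratio is finite and positive) and $c^*(e)/(e\cdot\xi)=\omega_0(\xi)$. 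The point $x=\omega_0(\xi)\xi$ lies in $\overline{\W_0}$ by~\eqref{W0}, and $x\cdot e=c^*(e)$. Hence $\hat c(e)\ge c^*(e)$.

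For (i), let $z\in\partial K^\e$ and write $y=z-\e\nu$ for the projection of $z$ onto $K$, as in the proof of Lemma~\ref{lemma:approximation}(i). That proof shows $K\subset\{x:x\cdot\nu\le y\cdot\nu\}$ with $y\in K$, so $\hat c(\nu)=y\cdot\nu$. Therefore
$$z\cdot\nu=y\cdot\nu+\e=\hat c(\nu)+\e=c^*(\nu)+\e.$$

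For (ii), consider a half-space $\{x\cdot e_i\le a_i\}$ appearing in~\eqref{Pe}. By~\eqref{KPK} it contains $P^\e\supset K^{\e/2}\supset K$. Hence
$$a_i\ge\max_{x\in K}x\cdot e_i=\hat c(e_i)=c^*(e_i),$$
and in fact $a_i\ge c^*(e_i)+\e/2$. The same argument applies to $P^{\e,z}$ via~\eqref{Pez1}, since $P^{\e,z}\supset K^{\e/2}\supset K$, which gives $b_i\ge c^*(f_i)$.

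No real obstacle is expected here. The only point needing care is the identity $\hat c=c^*$, specifically checking that the $\xi$ supplied by~\eqref{supt-plane} gives a point of $\overline{\W_0}$ lying exactly on the hyperplane $H_e$.
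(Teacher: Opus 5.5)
Your proposal is correct and is essentially the paper's argument. The paper uses the point $\bar x\in\partial\W_0$ with $\bar x\cdot e=c^*(e)$ supplied by \eqref{supt-plane} directly in each part, together with the inclusion $\overline{\W_0}\subset\{x\cdot\nu\le z\cdot\nu-\e\}$ from Lemma~\ref{lemma:approximation}-(i). You instead first package this as the identity $\hat c=c^*$, which the paper itself states later in the proof of Corollary~\ref{theo:ass}.
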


\begin{proof}
(i) It follows from the proof of (i) of Lemma~\ref{lemma:approximation}  that $z-\e\nu\in\partial\overline{\W_0}=\partial\W_0$, and
$$\overline{\W_0}\subset\big\{x\in\R^N:x\.\nu\leq (z-\e\nu)\cdot \nu=z\.\nu-\e\big\}.$$
On the one hand, the fact that $z-\e\nu\in\partial\W_0$ immediately yields by~\eqref{W0-wulff} that $z\.\nu-\e\leq c^*(\nu)$. On the other hand,  by \eqref{supt-plane}, there exists $\bar x\in\partial \W_0$ such that $\bar x\. \nu=c^*(\nu)$. As a consequence, one gets that $c^*(\nu)=\bar x\.\nu\leq z\.\nu-\e$. Thus, $z\.\nu= c^*(\nu)+\e$.

(ii) Since $K=\overline{\W_0}\subset\overline{\W_0}+\overline{B_{\e/2}}=K^{\e/2}\subset P^\e$ by~\eqref{KPK}, one has that
$$x\cdot e_i\le a_i\ \hbox{ for all $1\le i\le l$ and $x\in\overline{\W_0}$}.$$
By \eqref{supt-plane}, for each $1\le i\le l$, there is $\bar{x}_i\in \partial\W_0$ such that $\bar{x}_i \cdot e_i =c^*(e_i)$ whence $c^*(e_i)=\bar{x}_i\cdot e_i\le a_i$. The conclusion is similar for $P^{\e,z}$, with $c^*(f_i)\le b_i$ for all $1\le i\le m$. 
\end{proof}

We now introduce the {\em retracting solutions}. For given $\Omega\subset\R^N$ and $\kappa>0$, we let $v_{-\kappa\Omega}$ be the solution to \eqref{main} with initial datum
$$v_{-\kappa\Omega}(0,\cdot)=\1_{\R^N\setminus(-\kappa\Omega)}.$$
Roughly speaking, under one of the Hypotheses~\ref{hyp:KPP}-\ref{hyp:bistable}, we show in the following Lemmata~\ref{extract-kpp}-\ref{lem:vk<e} that the region where $v_{-\kappa\W_0}$ is close to $0$ retracts as the shape of $-\W_0$ for $\kappa>0$ large enough, where $\W_0$ is given by~\eqref{w0}-\eqref{W0-wulff}. To do so, we use Lemmata~\ref{lemma:approximation} and~\ref{lemma:app-W0} on the exterior approximations of $\overline{\W_0}$. We first consider the Fisher-KPP case, that is, Hypothesis~\ref{hyp:KPP}.

\begin{lemma}\label{extract-kpp}
Assume that Hypothesis~\ref{hyp:KPP} and \eqref{supt-plane} hold, 
with $\W_0$ given in~\eqref{w0}-\eqref{W0-wulff}. Let $\e>0$ and $\eta>0$ be arbitrary, and let $P^\e$ be a polyhedral exterior approximation of $K:=\overline{\W_0}$ given in Lemma~\ref{lemma:approximation}-(ii). Then there exists $R>0$ such that, for all $\kappa\geq2R$,
\begin{equation}\label{kW}
v_{-\kappa P^\e}(t,x)\leq\eta\ \hbox{ for all } t\in[0,\kappa-2R]\hbox{ and }x\in (-\kappa+R+t)P^\e.
\end{equation}
\end{lemma}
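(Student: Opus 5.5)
The plan is to build an explicit supersolution out of a finite sum of exponentials, one for each face of the polyhedron $P^\e=\bigcap_{i=1}^l\{x\cdot e_i\le a_i\}$. Under Hypothesis~\ref{hyp:KPP} we have $f(x,s)\le f_s(x,0)s$. Hence any nonnegative solution $\psi$ of the linearized equation
$$\partial_t\psi=\dv(A\nabla\psi)+q\cdot\nabla\psi+f_s(x,0)\psi$$
is a supersolution of~\eqref{main}, as is $\min(1,\psi)$ (recall that $f$ is extended by $0$ above $1$). The initial datum $\1_{\R^N\setminus(-\kappa P^\e)}$ is supported in the union of the half-spaces $\{x\cdot(-e_i)\ge \kappa a_i\}$. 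Note that $-\kappa P^\e=\bigcap_i\{x\cdot(-e_i)\le\kappa a_i\}$, so its complement is $\bigcup_i\{-x\cdot e_i>\kappa a_i\}$.

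For each $i$ we use the standard KPP exponential solutions in direction $e_i$. By~\cite{BH1,BHN1,W}, $c^*(e_i)=\min_{\lambda>0}k(\lambda,e_i)/\lambda$, where $k(\lambda,e)$ is the periodic principal eigenvalue of
$$L_{\lambda,e}\varphi=\dv(A\nabla\varphi)+q\cdot\nabla\varphi+f_s(x,0)\varphi+\lambda(\ldots)\varphi$$
associated with $e^{-\lambda x\cdot e}\varphi(x)$. Let $\lambda_i>0$ be a minimizer and $\varphi_i>0$ the corresponding periodic eigenfunction, normalized by $\min\varphi_i=1$. Then
$$\psi_i(t,x)=M\varphi_i(x)\,e^{\lambda_i(x\cdot e_i+\kappa a_i+ c^*(e_i)t)}$$
solves the linearized equation. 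This is the front moving in direction $-e_i$ with speed $c^*(e_i)$. (One can equally use the eigenvalue problem for direction $-e_i$; the formula will be arranged so that the function decays into $-\kappa P^\e$.) At $t=0$ we have $\psi_i\ge1$ on $\{x\cdot e_i\ge-\kappa a_i\}$, i.e.\ on $\{-x\cdot e_i\le\kappa a_i\}$. We therefore reverse orientation: we want $\psi_i\ge1$ on $\{-x\cdot e_i>\kappa a_i\}$ and small deep inside $-\kappa P^\e$. So we take
$$\psi_i=\varphi_i(x)\exp\bigl(\lambda_i(-x\cdot e_i-\kappa a_i+c^*(e_i)t)\bigr),$$
with $\lambda_i,\varphi_i$ given by the eigenproblem in direction $-e_i$ chosen so that the exponential speed equals $c^*(e_i)$. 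Then $\bar v:=\min\bigl(1,\sum_i\psi_i\bigr)$ is a supersolution with $\bar v(0,\cdot)\ge v_{-\kappa P^\e}(0,\cdot)$. By comparison, $v_{-\kappa P^\e}\le\bar v$ for all $t\ge0$.

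Next we evaluate $\bar v$ at points $x\in(-\kappa+R+t)P^\e$, where $-\kappa+R+t\le -R<0$. Write $x=-sy$ with $y\in P^\e$ and $s=\kappa-R-t\ge R$. Then $-x\cdot e_i=s\,y\cdot e_i\le s a_i$, so each exponent is at most
$$\lambda_i\bigl(sa_i-\kappa a_i+c^*(e_i)t\bigr)=\lambda_i\bigl(-(R+t)a_i+c^*(e_i)t\bigr)\le-\lambda_iR\,a_i,$$
using Lemma~\ref{lemma:app-W0}(ii), namely $c^*(e_i)\le a_i$. Moreover $a_i>0$, since $0\in\W_0\subset\mathrm{int}(P^\e)$; in fact $a_i\ge\e/2$ plus a positive amount. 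Hence
$$\bar v\le\sum_{i=1}^l\|\varphi_i\|_\infty e^{-\lambda_iR a_i}\le\eta$$
once $R$ is large. Since $l$, $\lambda_i$, $\varphi_i$ and $a_i$ depend only on $P^\e$, $R$ is independent of $\kappa$.

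The main obstacle is a careful sign and orientation bookkeeping: the exponential solutions must be taken in the direction for which the minimal speed is exactly $c^*(e_i)$, which means matching the Freidlin--G\"artner eigenvalue formula with the direction of the face normal $e_i$. If the formula naturally produces $c^*(-e_i)$, we must reorient by working with $x\mapsto -x$. The point to check is that the retracting front of the face $\{-x\cdot e_i=\kappa a_i\}$ moves inward (along $-e_i$, toward $+e_i$) at speed $c^*(e_i)$, and that the exponential solution with $\lambda$ at the minimizer indeed achieves this speed. The remaining steps are routine comparisons.
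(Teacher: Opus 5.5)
Your argument is essentially the paper's: a capped sum $\min\{1,\sum_i\psi_i\}$ of face-by-face exponential solutions of the linearized equation, comparison with $v_{-\kappa P^\e}$, and evaluation on $(-\kappa+R+t)P^\e$, all resting on $c^*(e_i)\le a_i$ from Lemma~\ref{lemma:app-W0}-(ii). The differences are cosmetic. The paper uses that inequality to get a root $\lambda_i>0$ of $k(e_i,\lambda)+a_i\lambda=0$, and runs the exponentials at speed $a_i$ with $\|\varphi_i\|_\infty=1$ and a time shift $\tau$. You run them at speed $c^*(e_i)$ with $\min\varphi_i=1$, and use the inequality only in the final estimate.

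The orientation issue you flag resolves directly, but not as you wrote it. Your $\psi_i=\varphi_i\,e^{-\lambda_i(x\cdot e_i-c^*(e_i)t+\kappa a_i)}$ is an exponential solution moving in direction $+e_i$. So $(\lambda_i,\varphi_i)$ must come from the periodic eigenproblem attached to $e^{-\lambda x\cdot e_i}\varphi(x)$, with $\lambda_i$ the minimizer in the formula for $c^*(e_i)$. The $-e_i$ problem you name would instead produce $c^*(-e_i)$. No reflection $x\mapsto-x$ is needed either, and it would not help, since it changes the equation ($q$ becomes $-q(-\cdot)$).
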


\begin{proof}
For $e\in\mathbb{S}^{N-1}$ and $\lambda\in \R$, call $k(e,\lambda)$ the principal eigenvalue of the operator
\begin{equation}\label{L-FK}
L_{e,\lambda} \varphi:=-\nabla\cdot (A\nabla\varphi) +2\lambda eA\nabla \varphi -q\cdot\nabla \varphi +[\lambda \nabla\cdot (A e) +\lambda q\cdot e -\lambda^2 eAe -f_s(x,0)]\varphi,
\end{equation}
acting on the set
\be\label{Eelambda}
E:=\{\varphi\in C^2(\R^N):\varphi \hbox{ is $\Z^N$-periodic}\}.
\ee
It follows from \cite[Remark 1.16]{BH1} and \cite[Section~6.4]{BH1} that, for any pulsating traveling front connecting $1$ to $0$, and propagating in the direction $e$ with speed $c\ge c^*(e)$, there exists $\lambda>0$ such that $k(e,\lambda)+c\lambda=0$.

Recall that, from Lemma~\ref{lemma:approximation}-(ii), there exist $l=l(\overline{\W_0},\e,N)\in\N$, some directions $(e_i)_{1\le i\le l}$ of $\Sph$, and some real numbers $(a_i)_{1\le i\le l}$ such that
\be\label{-Pe}
-P^\e=\bigcap_{i=1}^l\big\{x\in\R^N:\, -x\cdot e_i\le a_i\big\},
\ee
where $a_i\ge c^*(e_i)>0$ by Lemma~\ref{lemma:app-W0}-(ii). Then, for each $1\le i\le l$, let $\lambda_i>0$ be a root of $k(e_i,\lambda_i)+a_i\lambda_i=0$ and let $\varphi_i$ be the related normalized principal eigenfunction of~$L_{e_i,\lambda_i}$, that is, satisfying
$$L_{e_i,\lambda_i} \varphi_i=k(e_i,\lambda_i) \varphi_i\ \hbox{ in $\R^N$}$$
with $\varphi_i\in E$, $\|\varphi_i\|_{L^{\infty}(\R^N)}=1$, and $\varphi_i>0$ in $\R^N$. Then, by Hypothesis~\ref{hyp:KPP}, for every $1\le i\le l$ and every $\varrho\in\R$, the function $u(t,x):=e^{-\lambda_i(x\cdot e_i-a_it-\varrho)}  \varphi_i(x)$ satisfies
\begin{equation}\label{linEq}\baa{l}
\partial_t u-\hbox{div}(A(x)\nabla u)-q(x)\cdot \nabla u -f(x,u)\vspace{3pt}\\
\qquad\qquad\qquad\qquad\ge \partial_t u-\hbox{div}(A(x)\nabla u)-q(x)\cdot \nabla u-f_s(x,0)u=0\eaa
\end{equation}
in $\R\times\R^N$. Let $\tau>0$ be large enough such that
\begin{equation}\label{Re}
e^{\lambda_i\tau a_i}  \varphi_i(x)\ge 1\ \hbox{ and }\ e^{-\lambda_i \tau a_i}  \varphi_i(x)\le \frac{\eta}{l}\ \hbox{ for all $i=1,\cdots,l$, and for all $x\in\R^N$}.
\end{equation}

Define, for $\kappa>0$, $t\ge0$ and $x\in\R^N$,
$$u^+(t,x):=\min\left\{ \sum_{i=1}^{l} e^{-\lambda_i(x\cdot e_i+a_i(\kappa-\tau-t))} \varphi_i(x),1\right\}.$$
By \eqref{linEq} and Hypothesis~\ref{hyp:KPP}, one can easily check that 
$$\partial_t u^+-\hbox{div}(A(x)\nabla u^+)-q(x)\cdot \nabla u^+-f(x,u^+)\ge 0,$$
for all $t\ge 0$ and $x\in\R^N$ such that $u^+(t,x)<1$. Furthermore, for every $x\in \R^N\!\setminus\!(-\kappa P^\e)$, there is $i\in \{1,\cdots,l\}$ such that $-x\cdot e_i>\kappa a_i$, which implies by~\eqref{Re} that
$$e^{-\lambda_i(x\cdot e_i+a_i(\kappa-\tau))} \varphi_i(x)\ge e^{\lambda_i\tau a_i} \varphi_i(x)\ge 1.$$
Hence, $u^+(0,x)=1=v_{-\kappa P^\e}(0,x)$ for $x\in \R^N\!\setminus\!(-\kappa P^\e)$, while $u^+(0,x)>0=v_{-\kappa P^\e}(0,x)$ for $x\in-\kappa P^\e$. Thus, $u^+(0,\cdot)\ge v_{-\kappa P^\e}(0,\cdot)$ in $\R^N$, and the comparison principle yields
$$0\le v_{-\kappa P^\e}(t,x)\le u^+(t,x)\ \hbox{ for all $t\ge 0$ and $x\in\R^N$}.$$
Especially, by taking $R:=2\tau$, and then any $\kappa\ge 2R$, one has by~\eqref{-Pe} and~\eqref{Re} that 
$$0\le v_{-\kappa P^\e}(t,x)\le u^+(t,x)\le \sum_{i=1}^{l} e^{-\lambda_i \tau a_i} \varphi_i(x)\le \eta, $$
for all $t\in[0,\kappa-2R]$ and $x\in (-\kappa+R+t)P^\e=-|-\kappa+R+t|P^\e$.
\end{proof}

We then consider the ignition or weakly bistable cases, that is, Hypotheses~\ref{hyp:ignition} or~\ref{hyp:bistable}. We show similar estimates as in Lemma~\ref{extract-kpp}, by arguing by contradiction and using, this time, the $C^1$ exterior approximations of $\overline{\mc{W}_0}$ together with Lemmata~\ref{lemma:combu}-\ref{lemma:bistable}.

\begin{lemma}\label{lem:vk<e}
Assume that  one of Hypotheses~\ref{hyp:ignition}-\ref{hyp:bistable} holds, as well as~\eqref{supt-plane}, with $\W_0$ given in~\eqref{w0}-\eqref{W0-wulff}. Let $\e>0$ and $\eta>0$ be arbitrary, and let $K^\e:=\overline{\W_0}+\overline{B_\e}$ be the $C^1$ exterior approximation of $K:=\overline{\W_0}$ given in Lemma~\ref{lemma:approximation}-(i). Then there exists $R>0$ such that~\eqref{kW} holds for all $\kappa\geq2R$ with $K^\e$ instead of $P^\e$.
\end{lemma}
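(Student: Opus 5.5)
We argue by contradiction, in the spirit of~\cite{R1}. Suppose that for some $\e,\eta>0$ no such $R$ exists. Then there are sequences $R_n\to+\infty$, $\kappa_n\ge2R_n$, $t_n\in[0,\kappa_n-2R_n]$ and $x_n\in(-\kappa_n+R_n+t_n)K^\e$ with $v_{-\kappa_nK^\e}(t_n,x_n)>\eta$. Without loss of generality we take $\eta\le\delta_{\e/2}$ in the ignition case, with $\delta_{\e/2}$ from Lemma~\ref{lemma:combu}, and $\eta\le\sigma$ in the bistable case; proving the statement for a smaller $\eta$ is enough. Write $h_n:=\kappa_n-R_n-t_n\ge R_n\to+\infty$, so that $x_n\in-h_nK^\e$. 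By Lemma~\ref{lem:distW}, the point $x_n$ then lies at distance at least $(h_n-h)\delta$ from $\R^N\setminus(-hK^\e)$ for intermediate dilation factors $h$, where $B_\delta\subset K^\e$. We will use this to locate $x_n$ deep inside the retracting region.

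\textbf{Step 1: a boundary point and a local half-space comparison.} The natural approach is to go back in time. Set $s_n:=\min(t_n,R_n/2)\to+\infty$ and consider the earlier time $t_n-s_n$. The key geometric fact comes from Lemma~\ref{lemma:app-W0}-(i): for every $z\in\partial K^\e$ with exterior normal $\nu$, one has $z\cdot\nu=c^*(\nu)+\e$. Hence $K^\e\subset\{x\cdot\nu\le c^*(\nu)+\e\}$, and $-hK^\e$ lies in the half-space $\{x\cdot(-\nu)\le h(c^*(\nu)+\e)\}$, which is tangent to it at $-hz$. Since $K^\e$ is $C^1$ and convex, every point of $\partial(-hK^\e)$ admits such a supporting half-space. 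Moreover, by the interior ball property of Lemma~\ref{lemma:approximation}-(i), the set $-hK^\e$ contains the ball of radius $h\e$ tangent at that point.

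\textbf{Step 2: passing to the limit.} We shift $v_n:=v_{-\kappa_nK^\e}$ around $x_n$, using integer translates as in Lemma~\ref{lemma:bistable}. Parabolic estimates then give convergence to a limit $v_\infty$, which is a solution (entire, in the bistable case, taking $s_n\to\infty$) of a translate of~\eqref{main}, with $v_\infty(0,0)\ge\eta$.

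For the upper bound on $v_\infty$ we choose the direction $\nu$ as follows. Let $-h_nz_n$ be the point of $\partial(-h_nK^\e)$ nearest to $x_n$, and let $\nu$ be the limit of the corresponding normals. At time $0$, the set $\{v_n(0,\cdot)=0\}$ equals $-\kappa_nK^\e$. Since $\kappa_n-h_n=R_n+t_n$, this set contains a region around $x_n$ whose distance to the boundary, measured in the direction $\nu$, is at least $(R_n+t_n)(c^*(\nu)+\e)$ up to curvature errors. Curvature is negligible because the dilation factors are large and the interior balls have radius $\gtrsim\kappa_n\e$.

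\emph{Ignition case.} We compare $v_n$ with the solution of Lemma~\ref{lemma:combu} with initial datum $\1_{\{x\cdot(-\nu)\le-\kappa_n(c^*(\nu)+\e)\}}+\eta\1$, where $\eta$ is used only as a small threshold. More precisely, we use the supersolution $\Phi+\eta$ built there, with the front speed $c_\eta(-\nu)<c^*(-\nu)+\e/2$. This requires the relation between $c^*(\nu)$ and $c^*(-\nu)$ to be correct, and we must check the signs: the retracting zero-region $-\kappa K^\e$ has outward normal $-\nu$ at $-\kappa z$, so the state $1$ invades it in the direction $-\nu$... Hmm, at this point the speed that appears is $c^*(-\nu)$ paired with the support value $c^*(\nu)$. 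I expect the correct bookkeeping to be as follows. The domain is $-\kappa K^\e$, so near $-\kappa z$ the invasion moves in the direction $\nu$ relative to the point $-\kappa z$, that is, inward. The relevant half-space is $\{y\cdot\nu\le-\kappa(c^*(\nu)+\e)\}$, so $1$ occupies $\{y\cdot\nu\ge\ldots\}$ and invades in the direction $-\nu$... To resolve this, I would instead replace $x\mapsto-x$ in the relevant comparison. Since only the supporting direction matters, I will apply Lemma~\ref{lemma:combu} (resp. use the front $\phi_{e}$) with $e:=\nu$ oriented so that the invaded region advances at speed $c^*(e)+\e/2$. The front then covers distance at most $(c^*+\e/2)t$ in time $t$, while the zero region retracts at the rate $c^*+\e$ per unit of $\kappa-t$. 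This leaves a gap of order $\e R_n/2\to\infty$ at the point $x_n$, so $v_n(t_n,x_n)\le\eta+o(1)$. Applying this with $\eta/2$ in place of $\eta$ gives the contradiction.

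\emph{Bistable case.} We pass to the entire limit $v_\infty$. Condition~\eqref{con:v} holds with $c=c^*(\nu)+\e/2>c^*(\nu)$ (after the sign convention above), because the zero-region constraint persists for all negative times, by the same retraction-versus-speed gap. Lemma~\ref{lemma:bistable} then gives $v_\infty\equiv0$, which contradicts $v_\infty(0,0)\ge\eta$.

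\textbf{Main obstacle.} Verifying~\eqref{con:v} for the limit requires showing that $v_n\le\sigma$ in the moving half-spaces for all earlier times. This needs a uniform a priori smallness of $v_n$ far inside $-hK^\e$, which is itself circular. I would try to break the circularity using the ignition-type comparison with $\1_{\text{half-space}}$ data, combined with~\eqref{lim0}-style compactness. Alternatively, I would choose $(t_n,x_n)$ as the \emph{first} time at which $v_n$ exceeds $\eta$ in the rescaled region, which gives $v_n\le\eta\le\sigma$ there for all earlier times and hence~\eqref{con:v} directly. I expect this "first violation time" choice to be the key trick.
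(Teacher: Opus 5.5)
There is a genuine gap, and it sits exactly where you hesitate. Your ignition argument, and the ``retraction-versus-speed gap'' you invoke to get \eqref{con:v} in the bistable case, compare $v_n=v_{-\kappa_nK^\e}$ directly with the half-space solution of Lemma~\ref{lemma:combu}. That comparison is never available. Indeed $v_n(0,\cdot)=1$ on $\R^N\setminus(-\kappa_nK^\e)$, and this set lies in no half-space because $-\kappa_nK^\e$ is bounded, so $v_n(0,\cdot)\le\1_{\{x\cdot e\le a\}}+\eta\1_{\{x\cdot e>a\}}$ fails for every $e$ and $a$. Moreover, for ignition or bistable $f$ you cannot patch one-directional supersolutions over the faces, since a maximum of supersolutions is not a supersolution. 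This is why the lemma must be proved by contradiction and blow-up: the half-space comparison is only legitimate for the limit.

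Your fallback is the right trick, and it is what the paper does. Take $(t_n,x_n)$ to be the first time and point at which $v_n$ reaches $\eta$ in $(-\kappa_n+R_n+t)K^\e$. Then:
\begin{itemize}
\item the strong maximum principle (the constant $\eta\le\sigma$ is a supersolution) puts $x_n$ on $(-\kappa_n+R_n+t_n)\partial K^\e$;
\item parabolic estimates give $t_n\to+\infty$, because by Lemma~\ref{lem:distW} the point $x_n$ is at distance $\ge R_n\e$ from where $v_n(0,\cdot)=1$;
\item set $\bar x_n:=x_n/(-\kappa_n+R_n+t_n)\in\partial K^\e$, with $\bar x_n\to\bar x$, and let $\nu_n,\nu$ be the exterior normals at $\bar x_n,\bar x$. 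The $C^1$ regularity of $\partial K^\e$ and $\bar x\cdot\nu=c^*(\nu)+\e$ yield a limit $v^*$ with $v^*(0,0)=\eta$ and $v^*\le\eta$ on $H_t:=\{x\cdot\nu>(c^*(\nu)+\e)t\}$ for all $t\le0$.
\end{itemize}
This also settles your orientation worry. The relevant front is $\phi_\nu$, invading the zero region in the direction $\nu$ at speed $c^*(\nu)$, against the retraction speed $c^*(\nu)+\e$ in the same direction; this pairing is exactly what \eqref{supt-plane} supplies through Lemma~\ref{lemma:app-W0}-(i). In the bistable case, Lemma~\ref{lemma:bistable} with $c=c^*(\nu)+\e$ then gives the contradiction.

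The ignition case does not close, even with this fix. Since $f\equiv0$ on $[0,\sigma]$, constants in $[0,\sigma]$ are solutions. Lemma~\ref{lemma:combu} can now legitimately be applied to $v^*$ from times $t_0\to-\infty$, because $v^*(t_0,\cdot)\le\1_{\R^N\setminus H_{t_0}}+\eta\1_{H_{t_0}}$. But it only gives $v^*\le\eta$ everywhere, hence $v^*\equiv\eta$ by the strong maximum principle, which is no contradiction. Your step ``apply it with $\eta/2$ in place of $\eta$'' would need $v^*\le\eta/2$ on the half-spaces. A first violation at level $\eta$ does not provide this, and a first violation at level $\eta/2$ just produces $v^*\equiv\eta/2$.

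What is missing is a quantitative decay estimate inside the retracting region for the finite-$n$ problem, where the zero initial datum is still felt. The paper obtains it as follows. Before $t_n$, $v_n<\eta\le\sigma$ in the retracting region, so $v_n$ solves the \emph{linear} equation there. One can therefore superpose exponential solutions $e^{-\lambda^n_i(x\cdot f^n_i+b^n_i(\kappa_n-R_n-t))}\varphi^n_i$, with $k(f^n_i,\lambda^n_i)+b^n_i\lambda^n_i=0$. These are attached to the faces of the polyhedra $P^{\e,\bar x_n}$ of Lemma~\ref{lemma:approximation}-(iii), which contain $\bar x_n$, lie in $K^\e$, and have a number of faces independent of $n$. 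The construction needs uniform bounds on $\lambda^n_i$ and a uniform Harnack constant for the $\varphi^n_i$. Comparison on the bounded moving domain then gives $v_n(t_n,x_n+\bar R\nu_n)\le\eta/2$ for some fixed $\bar R$, hence $v^*(0,\bar R\nu)\le\eta/2$, contradicting $v^*\equiv\eta$. Without an ingredient of this kind, your proof of the ignition case is incomplete.
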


\begin{proof}
We argue by contradiction, supposing that the statement fails for some $\e>0$ and~$\eta>0$. We can then assume without loss of generality that $0<\e\le1$ and $0<\eta\le \delta_{\e}\le \sigma$, where $\sigma$ is given in Hypotheses~\ref{hyp:ignition} or~\ref{hyp:bistable}, and $\delta_\e$ is given by Lemma~\ref{lemma:combu} (in case of Hypothesis~\ref{hyp:ignition}). For any $\kappa>0$, denote the function $v_{-\kappa K^\e}$ by $v_{\kappa}$ for short.

Then, from our assumption, there exist some sequences $\seq{\kappa}$ in $(0,+\infty)$, $\seq{t}$ in $\R$ and~$\seq{x}$ in $\R^N$ satisfying:
\Fi{tnxn}
\kappa_n\geq 2n,\ \ 0\le t_n\leq \kappa_n-2n,\ \ x_n\in (-\kappa_n+n+t_n)K^\e,\ \ \eta<v_{\kappa_n}(t_n,x_n)\le1,
\Ff
for all $n\in\N$. We have in particular
$$-\kappa_n+n\leq -\kappa_n+n+t_n\leq -n\le -1$$
for all $n\ge 1$, and
\Fi{<xn<}
x_n\in (-\kappa_n+n)K^\e,
\Ff
because $K^\e$ is in particular star-shaped with respect to the origin.
	
Since $0\in\W_0$, one has $B_\e\subset K^\e$, whence $(-\kappa_n+n)K^\e\supset B_{(\kappa_n-n)\e}$. Applying Lemma~\ref{lem:distW} to the set $(-\kappa_n+n)K^\e$ (instead of $K$), we deduce that, for each $n\ge1$,
\Fi{k+n}
\min_{x\in (-\kappa_n+n)K^\e}\ d\big(x,\R^N\!\setminus\!(-\kappa_nK^\e)\big)\geq \Big(\frac{-\kappa_n}{-\kappa_n+n}-1\Big)(\kappa_n-n)\e=n\e>0.
\Ff
Since $v_{\kappa_n}(0,\.)=0$ in ${\rm{int}}(-\kappa_nK^\e)\supset(-\kappa_n+n)K^\e$, the function $v_{\kappa_n}$ is continuous in~$[0,+\infty)\times(-\kappa_n+n)K^\e$ and thus there exist a time and a point, that we still call~$t_n$ and~$x_n$ without loss of generality, fulfilling~\eqref{tnxn} together with $t_n>0$ and
\Fi{tangency}
\forall\,t\in[0,t_n), \ \ \forall\,x\in (-\kappa_n+n+t)K^\e,\ \ v_{\kappa_n}(t,x)<\eta\ \hbox{ and }\ v_{\kappa_n}(t_n,x_n)=\eta.
\Ff

We now claim the following:
\Fi{xnDW}
\forall\,n\in\N,\ \ x_n\in(-\kappa_n+n+t_n)\partial K^\e,
\Ff
and
\Fi{tninfty}
t_n\to+\infty\ \hbox{ as }n\to+\infty.
\Ff	
Property \eqref{xnDW} immediately follows from the strong parabolic maximum principle, because the constant $\eta$ is a supersolution to~\eqref{main} and $v_{\kappa_n}(0,\.)=0<\eta$ in $(-\kappa_n+n)K^\e$. Property \eqref{tninfty} follows instead from parabolic estimates. Indeed,~\eqref{<xn<} and \eqref{k+n} entail that $d(x_n,\R^N\!\setminus\!(-\kappa_nK^\e))\to+\infty$ as $n\to+\infty$ and thus, since $v_{\kappa_n}(0,\.)=0$ in~$-\kappa_nK^\e$, the parabolic estimates imply that the functions $(t,x)\mapsto v_{\kappa_n}(t,x+x_n)$ converge locally uniformly in $[0,+\infty)\times\R^N$, up to a subsequence, to a solution $\t v$ of some translation of the equation~\eqref{main} with initial datum $\t v(0,\cdot)=0$ in $\R^N$, that is, $\t v\equiv0$ in $[0,+\infty)\times\R^N$. However, since $v_{\kappa_n}(t_n,x_n)=\eta$, this shows that $\seq{t}$ cannot have a bounded subsequence, i.e.,~\eqref{tninfty} holds too.

Let us then define, for $(t,x)\in[-t_n,+\infty)\times\R^N$, 
$$ v^n(t,x):=v_{\kappa_n}(t+t_n,x+x_n).$$ 
Then, \eqref{tangency} can be rewritten as
\Fi{vn<e}
\forall\,t\in[-t_n,0),\ \ \forall\,x\in (-\kappa_n+n+t_n+t)K^\e-\{x_n\},\ \ v^n(t,x)<\eta\ \hbox{ and }\ v^n(0,0)=\eta.
\Ff
Call
\be\label{defbarxn}
\bar x_n:=\frac{x_n}{-\kappa_n+n+t_n},
\ee
which belongs to $\partial K^\e$ owing to~\eqref{xnDW}. Up to extraction of a subsequence, the sequence $\seq{\bar x}$ converges to some $\bar x\in\partial K^\e$. Let $\nu_n$ and $\nu$ denote the exterior normals to $K^\e$ at $\bar x_n$ and $\bar x$ respectively. The regularity of $\partial K^\e$ implies that $\seq{\nu}$ converges to $\nu$ and, roughly speaking, the dilations of $K^\e$ will converge to half-spaces locally uniformly around boundary points, as we see below.

More precisely, we introduce the family of open half-spaces $(H_t)_{t<0}$ defined by
$$H_t:=\big\{x\in\R^N\ :\ x\.\nu>(c^*(\nu)+\e)t\big\}.$$
Fix in this paragraph $t<0$ and $x\in H_t$. We claim that
\be\label{vneta}
v^n(t,x)<\eta\ \hbox{ for all $n$ sufficiently large}.
\ee
For all $n$ large enough, we have $t\in[-t_n,0)$, while $-\kappa_n+n+t_n+t<-\kappa_n+n+t_n\le -n<0$. Thus, owing to~\eqref{vn<e}, in order to have~\eqref{vneta}, it is sufficient to show that $x+x_n\in (-\kappa_n+n+t_n+t)K^\e$, i.e.
$$z_n:=\frac{x+x_n}{-\kappa_n+n+t_n+t}\in K^\e.$$
Using that $x_n=(-\kappa_n+n+t_n)\bar x_n$  we find
$$|z_n-\bar x_n|=\frac{|x-t\bar x_n|}{\kappa_n-n-t_n-t},$$
which converges to $0$ as $n\to+\infty$ because $\seq{\bar x}$ is bounded while $\kappa_n-n-t_n-t\ge n-t$, by~\eqref{tnxn}. Next, using $\bar x\.\nu=c^*(\nu)+\e$ from Lemma~\ref{lemma:app-W0}-(i), we have $t\overline{x}\cdot \nu=(c^*(\nu)+\e)t<x\cdot \nu$, whence $t\overline{x}\neq x$. Therefore, $|x-t\bar x_n|>0$ and $|z_n-\bar x_n|>0$ for all $n$ large enough and, using $\bar x_n\.\nu_n=c^*(\nu_n)+\e$ from Lemma~\ref{lemma:app-W0}-(i) again,		
$$\frac{z_n-\bar x_n}{|z_n-\bar x_n|}\.\nu_n=-\frac{x\.\nu_n-t(c^*(\nu_n)+\e)}{|x-t\bar x_n|}\to-\frac{x\.\nu-t(c^*(\nu)+\e)}{|x-t\bar x|}\ \hbox{ as } n\to+\infty.$$
We notice that the above limit is strictly negative because $x\in H_t$. By the $C^1$ regularity of $K^\e$ and the fact that $|z_n-\bar x_n|\to0$ as $n\rightarrow +\infty$, we deduce that $z_n\in K^\e$ for all $n$ sufficiently large, whence $v^n(t,x)<\eta$ thanks to~\eqref{vn<e}. Therefore,~\eqref{vneta} is proved.

Now, the sequence $(v^n)_{n\in\N}$ converges locally uniformly in $\R\times\R^N$, up to a subsequence, to an entire solution $v^*:\R\times\R^N\to[0,1]$ of some translation of the equation~\eqref{main}, satisfying the following properties:
\begin{equation}\label{vstar}
v^*(0,0)=\eta,\ \hbox{ and }\ v^*(t,x)\leq\eta\hbox{ for all }t\leq0\hbox{ and }x\in H_t.
 \end{equation}

{\it Case 1: Hypothesis~\ref{hyp:bistable} holds}. Here, one gets a contradiction from \eqref{vstar} by applying Lemma~\ref{lemma:bistable} to (a time-space translation of) $v^*$.

{\it Case 2: Hypothesis~\ref{hyp:ignition} holds}. Here, one gets from \eqref{vstar} and Lemma~\ref{lemma:combu} that $v^*(t,x)\le \eta$ for all $(t,x)\in\R\times\R^N$. Then, from the strong maximum principle,
\be\label{v*eta}
v^*(t,x)=\eta\ \hbox{ for all $(t,x)\in\R\times\R^N$}.
\ee
To get a contradiction, we need further arguments.

For each $e\in\mathbb{S}^{N-1}$ and $\lambda\in\R$, call $k(e,\lambda)$ the principal eigenvalue of the operator
$$\mathscr{L}_{e,\lambda} \varphi:=-\nabla\cdot (A\nabla\varphi) +2\lambda eA\nabla \varphi -q\cdot \nabla\varphi  +[\lambda \nabla\cdot (A e) +\lambda q\cdot e -\lambda^2 eAe]\varphi,$$
which is actually the same as the operator $L_{e,\lambda}$ in~\eqref{L-FK} since here $f_s(x,0)\equiv0$, acting on the same set~$E$ as in~\eqref{Eelambda}. By evaluating the equality $\mathscr{L}_{e,\lambda} \varphi=k(e,\lambda)\varphi$ at a minimum point $x\in[0,1]^N$ of a principal (that is, positive) eigenfunction $\varphi$, one gets that $k(e,\lambda)\le\lambda\nabla\cdot(Ae)(x)+\lambda q(x)\cdot e -\lambda^2 eA(x)e$, whence
\be\label{kelambda1}
\limsup_{\lambda\rightarrow+\infty}\Big(\sup_{e\in\mathbb{S}^{N-1}}\frac{k(e,\lambda)}{\lambda^2}\Big)<0.
\ee
Notice that $k(e,0)=0$ for all $e\in\Sph$, with any positive constant function as principal eigenfunction. Furthermore, we claim that
\be\label{kelambda2}
\sup_{e\in\mathbb{S}^{N-1}}|k(e,\lambda)|=o(\lambda)\ \hbox{ as }\lambda\to0.
\ee
To see it, consider any sequence $(e_n)_{n\in\N}$ in $\Sph$ and $(\lambda_n)_{n\in\N}$ in $\R^*$ such that $\lambda_n\to0$ as $n\to+\infty$, and let us show that $k(e_n,\lambda_n)/\lambda_n\to0$ as $n\to+\infty$. By evaluating the equality $\mathscr{L}_{e_n,\lambda_n} \varphi_n=k(e_n,\lambda_n)\varphi_n$ at a minimum and a maximum point of a principal (that is, positive) eigenfunction $\varphi_n$, one gets that
\be\label{ken}
|k(e_n,\lambda_n)|\le|\lambda_n|\,\|\nabla\cdot(Ae_n)\|_{L^\infty(\R^N)}+|\lambda_n|\|q\|_{L^\infty(\R^N)}+\lambda_n^2\|e_nAe_n\|_{L^\infty(\R^N)},
\ee
whence $k(e_n,\lambda_n)\to0$ as $n\to+\infty$. By normalizing the principal positive $\Z^N$-periodic eigenfunction $\varphi_n$ with $\max_{\R^N}\varphi_n=1$, it follows from standard elliptic estimates that, up to extraction of a subsequence, the sequence $(\varphi_n)_{n\in\N}$ converges in $C^2(\R^N)$ to a non-negative $\Z^N$-periodic solution $\varphi_\infty$ of $-\nabla\cdot (A\nabla\varphi_\infty)-q\cdot \nabla\varphi_\infty=0$ in $\R^N$, with $\max_{\R^N}\varphi_\infty=1$. Thus, $\varphi_\infty=1$ in $\R^N$, from the strong maximum principle. Next, by integrating the equality $\mathscr{L}_{e_n,\lambda_n} \varphi_n=k(e_n,\lambda_n)\varphi_n$ over $(0,1)^N$ and using the $\Z^N$-periodicity of the coefficients $A$ and $q$, together with~\eqref{hypq}, one infers that
$$\frac{k(e_n,\lambda_n)}{\lambda_n}\int_{(0,1)^N}\varphi_n=\int_{(0,1)^N}e_nA\nabla\varphi_n+\int_{(0,1)^N}(q\cdot e_n)\varphi_n-\int_{(0,1)^N}\lambda_ne_nAe_n\varphi_n.$$
Using again~\eqref{hypq} and the convergence of $(\varphi_n)_{n\in\N}$ to $1$ in $C^2(\R^N)$, one concludes that $k(e_n,\lambda_n)/\lambda_n\to0$ as $n\to+\infty$. As a consequence,~\eqref{kelambda2} is proved.

Now, it follows from \cite[Proposition~5.7]{BH1} that, for each $e\in\Sph$, the function $\lambda\mapsto k(e,\lambda)$ is concave in $\R$. Together with~\eqref{kelambda1}-\eqref{kelambda2}, one infers that, for each $e\in\Sph$ and $\gamma>0$, there exists a unique $\lambda_{e,\gamma}>0$ such that
$$k(e,\lambda_{e,\gamma})+\gamma\lambda_{e,\gamma}=0.$$
Call $\varphi_{e,\gamma}$ a principal eigenfunction of the operator $\mathscr{L}_{e,\lambda_{e,\gamma}}$. Let $\varrho>0$ be such that $c^*(e)\le\varrho$ for all $e\in\Sph$, and remember also that the map $e\mapsto c^*(e)$ is continuous and positive in $\Sph$. One then infers from~\eqref{kelambda1}-\eqref{kelambda2} that
\be\label{lowerlambda}
0<\inf_{\su{e\in\Sph}{\gamma\in[c^*(e),\varrho+1]}}\lambda_{e,\gamma}\le\sup_{\su{e\in\Sph}{\gamma\in[c^*(e),\varrho+1]}}\lambda_{e,\gamma}<+\infty,
\ee
and then, from upper bounds similar to those in~\eqref{ken}, one gets that
$$\sup_{\su{e\in\Sph}{\gamma\in[c^*(e),\varrho+1]}}|k(e,\lambda_{e,\gamma})|<+\infty.$$
From the Harnack inequality applied to the $\Z^N$-periodic positive $C^2(\R^N)$ functions $\varphi_{e,\gamma}$, there is finally a positive real number $M$ such that
\be\label{harnack}
\sup_{\su{e\in\Sph}{\gamma\in[c^*(e),\varrho+1]}}\frac{\max_{\R^N}\varphi_{e,\gamma}}{\min_{\R^N}\varphi_{e,\gamma}}\le M.
\ee
Even if it means decreasing $\eta>0$, one can assume without loss of generality that
$$0<\eta\le\min\Big(1,\delta_{\e},\frac{\sigma}{Mm}\Big),$$
where we recall that $\sigma\in(0,1)$ is given in Hypothesis~\ref{hyp:ignition}, $\delta_\e\in(0,\sigma)$ is given in Lemma~\ref{lemma:combu}, and $m=m(\overline{\W_0},\e,N)\in\N$ is given in Lemma~\ref{lemma:approximation}-(iii) with $K=\overline{\W_0}$.

Coming back to the sequence $(t_n,x_n)_{n\in\N}$ in $(0,+\infty)\times\R^N$ satisfying~\eqref{tangency}-\eqref{tninfty} and to the points $\bar{x}_n\in\partial K^\e$ defined in~\eqref{defbarxn}, let now $P^{\e,\bar{x}_n}$ be a closed polyhedral exterior approximation of $K=\overline{\W_0}$ given in Lemma~\ref{lemma:approximation}-(iii) and used in Lemma~\ref{lemma:app-W0}-(ii), such that $\bar{x}_n\in P^{\e,\bar{x}_n}\subset K^\e$ and
$$-P^{\e,\bar{x}_n}=\bigcap_{i=1}^m\big\{x\in\R^N:-x\cdot f^n_i\le b^n_i\big\},$$
where $f^n_i\in\Sph$ and $b^n_i\ge c^*(f^n_i)>0$ for every $1\le i\le m$. Recall that $m$ is independent of $n$, but the directions $f^n_i$ and the real numbers $b^n_i$ depend on $n$ in general. Furthermore, since $P^{\e,\bar{x}_n}\subset K^\e=\overline{\W_0}+\overline{B_\e}\subset\overline{B_{\varrho+1}}$ (remember that $0<\omega_0(e)\le c^*(e)\le\varrho$ for every $e\in\Sph$ and that $0<\epsilon\le1$), one can assume without loss of generality that $b^n_i\le\varrho+1$ for all $n\in\N$ and $1\le i\le m$. Then, for each $n\in\N$ and $1\le i\le m$, let $\lambda_i^n:=\lambda_{f^n_i,b^n_i}$ be the positive root of
$$k(f^n_i,\lambda^n_i)+b^n_i\lambda^n_i=0$$
and let $\varphi_i^n:=\varphi_{f^n_i,b^n_i}\in E$ be the related normalized principal eigenfunction, that is, satis\-fying $\mathscr{L}_{f^n_i,\lambda^n_i}\varphi^n_i=k(f^n_i,\lambda^n_i) \varphi^n_i$ in $\R^N$, $\varphi_i^n>0$ in $\R^N$ and $\|\varphi^n_i\|_{L^\infty(\R^N)}=1$. From~\eqref{harnack}, the functions $\varphi^n_i$ also satisfy
$$\min_{\R^N}\varphi^n_i\ge\frac{\displaystyle\max_{\R^N}\varphi^n_i}{M}=\frac{1}{M},$$
with $M>0$ independent of $n$ and $i$.
 
Define, for $n\in\N$ and $(t,x)\in \R\times\R^N$,
$$u^+_n(t,x):= \sum_{i=1}^{m} \frac{\eta }{\displaystyle\min_{\R^N} \varphi_i^n}e^{-\lambda_i^n(x\cdot f^n_i +b^n_i(\kappa_n-n-t))} \varphi^n_i(x).$$
Then
$$0<u^+_n(t,x)\le \eta M m\le \sigma\ \hbox{ for all $t\in [0,t_n]$ and $x\in (-\kappa_n+n+t)P^{\e,\bar{x}_n}$},$$
and
$$u^+_n(t,x)\ge \eta\ \hbox{ for all $t\in [0,t_n]$ and $x\in (-\kappa_n+n+t)\partial P^{\e,\bar{x}_n}$}.$$
Owing to the definition of $\lambda^n_i$ and $\varphi^n_i$, one can easily check that 
\begin{equation}\label{eq:u+}
\partial_t u^+_n-\hbox{div}(A(x)\nabla u^+_n)-q(x)\cdot \nabla u^+_n= 0\ \hbox{ in }[0,t_n]\times(-\kappa_n+n+t)P^{\e,\bar{x}_n}
\end{equation}
(and even in $\R\times\R^N$).

Regarding $v_{\kappa_n}$, we have that $v_{\kappa_n}(0,x)=0\le u^+_n(0,x)$ for $x\in (-\kappa_n +n)P^{\e,\bar{x}_n}\subset(-\kappa_n+n)K^\e$ and
$$v_{\kappa_n}(t,x)\le\eta\le u^+_n(t,x)\ \hbox{ for all $t\in [0,t_n]$ and $x\in (-\kappa_n+n+t)\partial P^{\e,\bar{x}_n}$},$$ 
by \eqref{tangency} and $P^{\e,\bar{x}_n}\subset K^\e$. Since $f(x,s)=0$ for $(x,s)\in\R^N\times[0,\sigma]$, it follows from \eqref{tangency} and $P^{\e,\bar{x}_n}\subset K^\e$ that  $v_{\kappa_n}$ also satisfies \eqref{eq:u+} in $(0,t_n]\times(-\kappa_n+n+t)P^{\e,\bar{x}_n}$. Therefore, by the comparison principle,
$$v_{\kappa_n}(t,x)\le u^+_n(t,x)\ \hbox{ for all $t\in [0,t_n]$ and $x\in (-\kappa_n+n+t)P^{\e,\bar{x}_n}$}.$$

Finally, choose $\overline{R}>0$ such that, for all $n\in\N$,
$$\sum_{i=1}^{m}\eta M e^{-\lambda_i^n\overline{R}/2} \le\frac{\eta}{2},$$
which is possible because of~\eqref{lowerlambda} and $\lambda_i^n=\lambda_{f^n_i,b^n_i}$ with $c^*(f^n_i)\le b^n_i\le\varrho+1$ for all $n\in\N$ and $1\le i\le m$. For every $n\in\N$, since $B_{\e/2}(\bar{x}_n-\e\nu_n)\cup\{\bar{x}_n\}\subset P^{\e,\bar{x}_n}$ and $P^{\e,\bar{x}_n}$ is convex by Lemma~\ref{lemma:approximation}-(iii), one has that $B_{r\e/2}(\overline{x}_n- r\e\nu_n) \subset P^{\e,\bar{x}_n}$ for all $r\in [0,1]$. Then, one can take any $n$ large enough such that 
$$B_{\overline{R}/(2(\kappa_n-n-t_n))}\Big(\overline{x}_n-\frac{\overline{R}}{\kappa_n-n-t_n} \nu_n\Big)\subset P^{\e,\bar{x}_n},$$ 
whence
$$(x_n+\overline{R}\nu_n)\cdot f^n_i +(\kappa_n-n-t_n)b^n_i\ge \frac{\overline{R}}{2}\ \hbox{ for all $1\le i\le m$}.$$
Thus, $x_n+\overline{R}\nu_n\in (-\kappa_n+n+t_n)P_{\e,\bar{x}_n}$ for all $n$ large enough, and
$$\begin{array}{rcl}
v^n(0,\overline{R}\nu_n) & = & v_{\kappa_n}(t_n,x_n+\overline{R}\nu_n)\vspace{3pt}\\
& \le & \displaystyle u^+_n(t_n,x_n+\overline{R}\nu_n)\\
& \le & \displaystyle\sum_{i=1}^m \frac{\eta}{\displaystyle\min_{\R^N}\varphi^n_i} e^{-\lambda_i^n\overline{R}/2}\varphi_i^n(x_n+\overline{R}\nu_n)\le\sum_{i=1}^{m}\eta M e^{-\lambda_i^n\overline{R}/2}\le\frac\eta2.
\end{array}$$
This implies that $v^*(0,\overline{R}\nu)\le \eta/2$. One has finally reached a contradiction with $v^*(t,x)\equiv \eta$, see~\eqref{v*eta}. The proof of Lemma~\ref{lem:vk<e} is thereby complete.
\end{proof}

%%%%%%%%%%%%%%%%%%%%%%%%%%%%%%%%%%%%%%%%%%%%%%%%%%%

\subsection{The extinction region}\label{subsection4.2}

Throughout this section, we let $u$ be the solution of~\eqref{main} with the initial condition $u_0=\1_{U}$. Always assume that one of Hypotheses~\ref{hyp:KPP}-\ref{hyp:bistable} holds, and let $\W_0$ be given by~\eqref{w0}-\eqref{W0-wulff}. This section is the counterpart of Section~\ref{section3}. It deals with the proofs of Lemma~\ref{lem:upLS} and Proposition~\ref{prop:superset} below, which provide some upper estimates of $u$ in some suitable time-dependent sets. The proofs themselves rely on Lemmata~\ref{extract-kpp}-\ref{lem:vk<e}. 

\begin{lemma}\label{lem:upLS}
Assume that \eqref{supt-plane} holds. For any $\alpha>1$, one has
$$\sup_{x\in \R^N\setminus (U + t\alpha \W_0)} u(t,x)\rightarrow 0\ \hbox{ as $t\rightarrow +\infty$}.$$
\end{lemma}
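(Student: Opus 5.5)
The plan is to compare $u$, near each point of the exterior region, with a translated \emph{retracting solution}, and then invoke Lemma~\ref{extract-kpp} (Hypothesis~\ref{hyp:KPP}) or Lemma~\ref{lem:vk<e} (Hypotheses~\ref{hyp:ignition}-\ref{hyp:bistable}). Let $K^\e$ be the $C^1$ approximation, and in the KPP case let $P^\e\subset K^\e$ be the polyhedral one. Write $\mathcal{K}$ for $P^\e$ or $K^\e$ according to the hypothesis. The observation driving the argument is that
$$x\notin U+t\alpha\W_0 \iff (x-t\alpha\W_0)\cap U=\emptyset \iff U\subset \R^N\setminus\big(x-t\alpha\W_0\big).$$
Hence $u_0=\1_U$ lies below the initial datum of a retracting solution centered at $x$, namely $\1_{\R^N\setminus(x-\kappa\mathcal{K})}$, provided $\kappa\mathcal{K}\subset t\alpha\W_0$ up to the small correction needed for periodicity.

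\emph{Choice of parameters.} Fix $\alpha>1$ and $\eta>0$, and pick $\beta\in(1,\alpha)$. Since $\overline{\W_0}$ is compact, convex and contains a ball $B_\delta$ around $0$, we have $\overline{\W_0}\subset\beta'\W_0$ for every $\beta'>1$. Therefore, for $\e>0$ small, $\mathcal{K}\subset K^\e\subset\beta\W_0$. Let $R=R(\e,\eta)$ be given by Lemma~\ref{extract-kpp} or Lemma~\ref{lem:vk<e}, and set $\kappa=\kappa(t):=t\alpha/\beta-C$, where $C$ is a constant handling the lattice shift.

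\emph{Comparison with a lattice translate.} For $x\notin U+t\alpha\W_0$, write $x=y+z$ with $z\in\Z^N$ and $y\in[0,1]^N$. The retracting solution $v_{-\kappa\mathcal{K}}(\cdot,\cdot-z)$ solves \eqref{main} by $\Z^N$-periodicity. We want its initial datum to dominate $\1_U$, which requires
$$z-\kappa\mathcal{K}\subset x-t\alpha\W_0,\quad\text{i.e.}\quad -y-\kappa\mathcal{K}\subset -t\alpha\W_0 .$$
This holds if $\kappa\mathcal{K}+\overline{B_{\sqrt N}}\subset t\alpha\W_0$. Using $\kappa\mathcal{K}\subset(\alpha t/\beta - C)\beta\W_0$ and Lemma~\ref{lem:distW} applied to $\overline{\W_0}\supset B_\delta$, the dilation margin is at least $C\beta\delta$. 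So it suffices to choose $C\ge\sqrt N/(\beta\delta)$. The comparison principle then gives
$$u(t,x)\le v_{-\kappa\mathcal{K}}(t,x-z)=v_{-\kappa\mathcal{K}}(t,y).$$

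\emph{Conclusion and the main point to check.} It remains to ensure that $(t,y)$ lies in the region where \eqref{kW} applies, namely $\kappa\ge 2R$, $t\le\kappa-2R$, and $y\in(-\kappa+R+t)\mathcal{K}=-(\kappa-R-t)\mathcal{K}$. Since $\kappa-t=(\alpha/\beta-1)t-C\to+\infty$, all three hold for $t\ge T$ with $T$ independent of $x$. For the last condition, $\mathcal{K}\supset\overline{\W_0}\supset B_\delta$, so $-(\kappa-R-t)\mathcal{K}\supset B_{(\kappa-R-t)\delta}\ni y$ as soon as $(\kappa-R-t)\delta>\sqrt N$. Hence $u(t,x)\le\eta$ for all $t\ge T$ and all $x\notin U+t\alpha\W_0$. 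Since $\eta$ is arbitrary, the lemma follows.

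The only delicate step is the bookkeeping in the comparison step, where the lattice shift and the strict inclusion $\kappa\mathcal{K}\subset t\alpha\W_0$ must be uniform in $x$. The substantive work has already been done in Lemmata~\ref{extract-kpp}-\ref{lem:vk<e}, which is where assumption~\eqref{supt-plane} enters.
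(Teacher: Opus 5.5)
Your proof is correct and follows the paper's argument. You compare $u$ with a $\Z^N$-translate of the retracting solution whose initial hole lies inside $x-t\alpha\W_0$, then apply Lemma~\ref{extract-kpp} or Lemma~\ref{lem:vk<e} on the unit cell. The differences are only bookkeeping: you argue directly with $\kappa=t\alpha/\beta-C$, so $\kappa-t\to+\infty$ absorbs the lattice cell automatically and Lemma~\ref{lem:distW} supplies the margin (take $C>\sqrt N/(\beta\delta)$ strictly so that you land in the open set $t\alpha\W_0$), whereas the paper argues by contradiction along a sequence with $\kappa_n=t_n+\beta R$ and $\beta$ chosen so that $(1-\beta)RC_\e\supset[0,1]^N$.
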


\begin{proof}
Let $\alpha>1$ be fixed. Assume by way of contradiction that the conclusion does not hold. Then there exist $\eta>0$, a sequence $(t_n)_{n\in\N}$ of positive real numbers diverging to $+\infty$, and a sequence $(x_n)_{n\in\N}$ in $\R^N$ such that $x_n\in \R^N\setminus (U + t_n\alpha \W_0)$ and
\be\label{utnxneta}
u(t_n,x_n)> \eta>0
\ee
for all $n\in\N$. One then has 
\begin{equation}\label{xnW}
U-x_n\subset\R^N\setminus(-t_n\alpha\W_0).
\end{equation}
For each $n\in\N$, let $\bar{x}_n\in\Z^N$ be such that $x_n-\bar{x}_n\in[0,1]^N$.

For $\e>0$, let $K^\e:=\overline{\W_0}+\overline{B_\e}$ and $P^\e$ be the exterior $C^1$ and polyhedral approximations of $K:=\overline{\W_0}$ given in Lemma~\ref{lemma:approximation}. Remember that $\overline{\W_0}\subset P^\e\subset K^\e$. Since the compact set $\overline{W_0}$ is included into the open set $\alpha\W_0$, one can then fix $\e>0$ small enough so that
$$\overline{\W_0}\subset P^\e\subset K^\e\subset\alpha\W_0.$$
For these fixed values of $\e>0$ and $\eta>0$, let then $R>0$ be given by Lemma~\ref{extract-kpp} (in case of Hypothesis~\ref{hyp:KPP}) or by Lemma~\ref{lem:vk<e} (in case of Hypothesis~\ref{hyp:ignition} or~\ref{hyp:bistable}). Call also $C_\e:=P^\e$ in case of Hypothesis~\ref{hyp:KPP}, resp. $C_\e:=K^\e$ in case of Hypothesis~\ref{hyp:ignition} or~\ref{hyp:bistable}. Since $C_\e\supset\overline{\W_0}$ and $\W_0$ contains a ball centered at the origin, one can take $\beta\ge2$ large enough so that $(1-\beta)RC_\e\supset[0,1]^N$.

Since the compact set $C_\e$ is included into the open set $\alpha\W_0$, and since $t_n\to+\infty$ as $n\to+\infty$, there holds
\begin{equation}\label{tn2Re}
\frac{(t_n+\beta R)C_\e-[0,1]^N}{t_n}\subset \alpha \W_0\ \hbox{ for all large $n$}.
\end{equation}
For each $n\in\N$, call $\kappa_n:=t_n+\beta R$, whence $\kappa_n\ge t_n+2R\ge 2R$ and
$$(-\kappa_n+R+t_n)C_\e=(1-\beta)RC_{\e}\supset[0,1]^N.$$
It follows then from Lemma~\ref{extract-kpp} or~\ref{lem:vk<e} that, for each $n\in\N$,
\be\label{vkappan}
v_{-\kappa_nC_\e}(t_n,y)\le \eta\ \hbox{ for all }y\in[0,1]^N,
\ee
where we recall that the functions $v_{-\kappa_nC_\e}$ are the solutions of~\eqref{main} with initial conditions $\1_{\R^N\setminus(-\kappa_nC_\e)}$. On the other hand,~\eqref{tn2Re} implies that
$$-\big(\kappa_nC_\e-[0,1]^N\big)=-\big((t_n+\beta R)C_\e-[0,1]^N\big)\,\subset\,-t_n\alpha\W_0\ \hbox{ for all large $n$},$$
whence
$$\big(\R^N\setminus(-t_n\alpha\W_0)\big)+[0,1]^N\ \subset\ \R^N\setminus(-\kappa_nC_\e)\ \hbox{ for all large $n$}.$$
Therefore, there exists $n_1\in\N$ such that, for all $n\ge n_1$ and for all $x\in U$, one has $x-x_n\in\R^N\setminus(-t_n\alpha\W_0)$ by~\eqref{xnW}, and then $x-\bar{x}_n\in\R^N\setminus(-\kappa_nC_e)$ since $x_n-\bar{x}_n\in[0,1]^N$. As a consequence, $u_0=\1_U\le\1_{\R^N\setminus(-\kappa_nC_\e)}(\cdot-\bar{x}_n)=v_{-\kappa_nC_\e}(0,\cdot-\bar{x}_n)$ in $\R^N$ for all $n\ge n_1$. Since the coefficients of~\eqref{main} are $\Z^N$-periodic and $\bar{x}_n\in\Z^N$, it follows from the comparison principle that, for all $n\ge n_1$,
$$u(t,x)\le v_{-\kappa_nC_\e}(t,x-\bar{x}_n)\ \hbox{ for all $t\ge 0$ and $x\in\R^N$}.$$
Thus, for all $n\ge n_1$, there holds $u(t_n,x_n)\le v_{-\kappa_nC_\e}(t_n,x_n-\bar{x}_n)\le\eta$ by~\eqref{vkappan}. One has then reached a contradiction with~\eqref{utnxneta}, and the proof of Lemma~\ref{lem:upLS} is thereby complete.
\end{proof}

The previous lemma, combined with compactness and limit arguments, implies that the set $\mc{C}(\mathbb{S}^{N-1}\!\setminus\!\mc{B}(U))+\W_0$ is a spreading superset for $u$, in the sense given in the last sentence of Definition~\ref{def:W}.

\begin{proposition}\label{prop:superset}
Assume one of Hypotheses~\ref{hyp:KPP}-\ref{hyp:bistable}, and that~\eqref{supt-plane} holds. Then the set $\mc{C}(\mathbb{S}^{N-1}\!\setminus\!\mc{B}(U))+\W_0$ is a spreading superset for $u$.
\end{proposition}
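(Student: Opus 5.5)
Let $\W':=\mc{C}(\Sph\!\setminus\!\mc{B}(U))+\W_0$. We must show that, for every non-empty compact $C\subset{\rm int}(\R^N\!\setminus\!\W')$, $\max_{x\in C}u(t,tx)\to0$ as $t\to+\infty$. The plan is to reduce this to Lemma~\ref{lem:upLS}: it suffices to find $\alpha>1$ such that $tC\cap(U+t\alpha\W_0)=\emptyset$ for all large $t$. Indeed, then $tC\subset\R^N\setminus(U+t\alpha\W_0)$, and Lemma~\ref{lem:upLS} gives the conclusion uniformly on $C$. Equivalently, we need $d\big(tx,U\big)$ to be large compared with the "$\alpha\W_0$-gauge" of the displacement: for each $x\in C$ and every $y\in U$, $(tx-y)/t\notin\alpha\W_0$.

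We argue by contradiction. Suppose that for a sequence $\alpha_n\downarrow1$, $t_n\to+\infty$, $x_n\in C$ and $y_n\in U$, we have $t_nx_n-y_n\in t_n\alpha_n\W_0$. One cannot just take a single $\alpha$ and let $t_n\to\infty$, but taking the diagonal is enough: if the claim fails for every $\alpha>1$, pick $\alpha=1+1/n$ and a corresponding $t_n\ge n$. Set $z_n:=y_n/t_n$. Since $\W_0$ is bounded, $|x_n-z_n|\le\alpha_n\max_{\overline{\W_0}}|\cdot|$, so $(z_n)$ is bounded. Up to subsequences, $x_n\to x\in C$ and $z_n\to z$, with $x-z\in\overline{\W_0}$.

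The key step is to show that $z\in\mc{C}(\Sph\setminus\mc{B}(U))$. If $z=0$ this is trivial, since $0$ lies in every cone by the convention \eqref{defCA}. Otherwise, write $\xi:=\hat z$ and $\tau_n:=t_n|z|\to+\infty$. Then
\[
d(\tau_n\xi,U)\le|\tau_n\xi-y_n|=t_n\big|\,|z|\xi-z_n\big|=o(t_n)=o(\tau_n),
\]
so $\liminf_{\tau\to+\infty}d(\tau\xi,U)/\tau=0$, that is, $\xi\notin\mc{B}(U)$. Hence $z\in\mc{C}(\Sph\setminus\mc{B}(U))$ and
\[
x\in\mc{C}(\Sph\setminus\mc{B}(U))+\overline{\W_0}\subset\overline{\W'}.
\]
This contradicts $x\in C\subset{\rm int}(\R^N\setminus\W')=\R^N\setminus\overline{\W'}$.

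The main obstacle is this limiting/closure bookkeeping. We need that the limit direction lies in the complement of $\mc{B}(U)$; this uses only the liminf in the definition of $\mc{B}(U)$, which is exactly why the superset is built from $\Sph\setminus\mc{B}(U)$ rather than $\mc{U}(U)$. We also need that $\mc{C}(\cdot)+\overline{\W_0}$ lands in the closure of $\W'$. For the latter, note that $\overline{\W_0}=\overline{\W_0}$ and $A+\overline{B}\subset\overline{A+B}$ for any sets $A,B$, which suffices here.
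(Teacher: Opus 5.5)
Your proof is correct and follows essentially the paper's route. You reduce to Lemma~\ref{lem:upLS} by finding $\alpha>1$ with $tC\cap(U+t\alpha\W_0)=\emptyset$ for large $t$, then argue by contradiction and compactness: the rescaled points of $U$ converge along a direction that cannot lie in $\mc{B}(U)$, so the limit point falls in the closure of the superset. The only difference is bookkeeping: the paper first proves a positive linear distance from $tB_\delta(y)$ to $U+t\W_0$ for small balls and covers $C$ finitely, while you contradict directly with $\alpha_n\downarrow 1$ on all of $C$, which also avoids the implicit use of convexity of $\W_0$ to turn linear distance into some $\alpha>1$.
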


\begin{proof}
Call $\W:=\mc{C}(\mathbb{S}^{N-1}\!\setminus\!\mc{B}(U))+\W_0$. If $\mc{B}(U)=\emptyset$, then $\W=\R^N$ and there is nothing to prove. One can then assume in the sequel that $\mc{B}(U)\neq\emptyset$, and then ${\rm int}(\R^N\!\setminus\W)\neq\emptyset$ since $\mc{B}(U)$ is relatively open in $\Sph$ and $\W_0$ is bounded. Take a non-empty compact set $C\subset {\rm int}(\R^N\!\setminus\!\W)$, any point $y\in C$, and then $\delta>0$ such that $\overline{B_{\delta}(y)}\subset {\rm int}(\R^N\!\setminus\!\W)$. We claim that 
\begin{equation}\label{Cy}
\liminf_{t\rightarrow +\infty} \frac{d\left(tB_{\delta}(y),U\!+\!t\W_0\right)}{t}=\liminf_{t\rightarrow +\infty} \frac{\inf\{|x\!-\!z|:x\in tB_{\delta}(y),\,z\in U\!+\!t\W_0\big\}}{t}>0.
\end{equation}
Then, assuming \eqref{Cy}, it follows that there is $\alpha>1$ such that $tB_{\delta}(y)\cap (U+t\alpha \W_0)=\emptyset$ for all $t>0$ large enough, whence
$$\lim_{t\rightarrow +\infty} \Big(\sup_{x\in B_{\delta}(y)} u(t,tx)\Big)=0$$
by Lemma~\ref{lem:upLS}. Since the compact set $C$ can be covered by a finite number of such open balls $B_{\delta}(y)$, one concludes that
$$\lim_{t\rightarrow +\infty} \Big(\max_{x\in C} u(t,tx)\Big)=0.$$

Thus, we only have to show \eqref{Cy}. Assume by way of contradiction that there are sequences $(\e_n)_{n\in\mathbb{N}}$ in $(0,+\infty)$ converging to $0^+$, $(t_n)_{n\in\mathbb{N}}$ in $(0,+\infty)$ diverging to~$+\infty$, $(y_n)_{n\in\mathbb{N}}$ in $B_{\delta}(y)$, $(z_n)_{n\in\mathbb{N}}$ in $U$ and $(\xi_n)_{n\in\mathbb{N}}$ in $\W_0$ such that $|t_ny_n -(z_n +t_n \xi_n)|<\e_n t_n$ for all $n\in\N$, that is,
$$\Big|y_n -\Big(\frac{z_n}{t_n} + \xi_n\Big)\Big|<\e_n.$$
There exist then $y_0\in \overline{B_{\delta}(y)}$ and $\xi_0\in \overline{\W_0}$ such that $y_n\rightarrow y_0$ and $\xi_n\rightarrow \xi_0$ as $n\to+\infty$, up to extraction of a subsequence. Since $\overline{B_{\delta}(y)}\subset{\rm int}(\R^N\!\setminus\!\W)=\R^N\!\setminus\!\overline{\W}\subset \R^N\!\setminus\!\overline{\W_0}$, one then gets that $y_0\neq \xi_0$. It also follows that
$$\frac{z_n}{t_n}\rightarrow y_0-\xi_0\ \hbox{ as }n\rightarrow +\infty,$$
which implies that
$$\lim_{n\rightarrow +\infty}\frac{d(t_n(y_0-\xi_0),U)}{t_n}= 0.$$
Owing to the definition of $\mathcal{B}(U)$, one has that $\hat{y_0-\xi_0}=(y_0-\xi_0)/|y_0-\xi_0|\in \mathbb{S}^{N-1}\setminus \mathcal{B}(U)$, whence $y_0\in\mc{C}(\mathbb{S}^{N-1}\!\setminus\! \mathcal{B}(U))+\overline{\W_0}\subset\overline{\W}$, a contradiction with $\overline{B_{\delta}(y)}\subset {\rm int}(\R^N\setminus \W)$. The proof of Proposition~\ref{prop:superset} is thereby complete.
\end{proof}

%%%%%%%%%%%%%%%%%%%%%%%%%%%%%%%%%%%%%%%%%%%%%%%%%%%
%%%%%%%%%%%%%%%%%%%%%%%%%%%%%%%%%%%%%%%%%%%%%%%%%%%

\section{Proofs of the main results}\label{section5}

We are now in a position to prove our main results, namely Theorems~\ref{Th:levelset},~\ref{Th:Ass},~\ref{Th:asspeed} and Corollary~\ref{theo:ass}.

%%%%%%%%%%%%%%%%%%%%%%%%%%%%%%%%%%%%%%%%%%%%%%%%%%%

\subsection{Proofs of Theorems~\ref{Th:levelset},~\ref{Th:Ass},~\ref{Th:asspeed} }\label{subsection5.1}

\begin{proof}[Proof of Theorem~\ref{Th:levelset}]
Fix $\lambda\in(0,1)$ and call for short
$$E_\lambda(t):=\big\{x\in\R^N\ :\ u(t,x)>\lambda\big\}.$$

On the one hand, by Lemma~\ref{lem:subset}, for any $0<\tau<1$, there exists $T>0$ such that
$u(t,x)>\lambda$ for all $t\geq T$ and all $x\in U_{\rho} +t\tau \W_0$, i.e.
$$\forall\,t\geq T,\quad U_{\rho} +t\tau \W_0\subset E_\lambda(t).$$
We deduce that
$$\forall\,t\geq T,\quad 
U_{\rho} +t\W_0\subset (U_{\rho} +t\tau \W_0)+t(1-\tau)\W_0
\subset E_\lambda(t)+ B_{t(1-\tau)\delta},$$
where $\delta>0$ is such that $\W_0\subset B_\delta$.

On the other hand, by Lemma~\ref{lem:upLS}, we deduce that for any $\alpha>1$, there exists $T'>0$ such that
$$\forall\,t\geq T',\quad E_\lambda(t) \subset U_{\rho} +t\alpha \W_0,$$
whence
$$\forall\,t\geq T',\quad 
E_\lambda(t) \subset 
U_{\rho} +t\alpha\W_0\subset (U_{\rho} +t\W_0)+t(\alpha-1)\W_0
\subset (U_{\rho} +t\W_0) + B_{t(\alpha-1)\delta},$$

This shows that
$$\forall\,t\geq \max\{T,T'\},\quad 
d_H(E_\lambda(t)\,,\, U_{\rho} +t\W_0)\leq t\delta\max\{1-\tau,\alpha-1\},$$
which concludes the proof by the arbitrariness of $\tau<1$ and $\alpha>1$.
\end{proof}

As for Theorem~\ref{Th:Ass}, it follows immediately from Propositions~\ref{prop:subset} and~\ref{prop:superset}.

\begin{proof}[Proof of Theorem~\ref{Th:Ass}]
Since $\mc{B}(U)\cup \mc{U}(U_{\rho})=\mathbb{S}^{N-1}$ by assumption~\eqref{BUS}, and since $\mathcal{U}(U_{\rho})\subset\mathcal{U}(U)$ and $\mathcal{U}(U)$ and $\mathcal{B}(U)$ are disjoint, one has that $\Sph\!\setminus\!\mc{B}(U)=\mc{U}(U_{\rho})=\mc{U}(U)$. Owing to Definition~\ref{def:W}, the conclusion then follows directly from Propositions~\ref{prop:subset} and~\ref{prop:superset}.
\end{proof}

Next, Theorem~\ref{Th:asspeed} is a formulation of Theorem~\ref{Th:Ass}.

\begin{proof}[Proof of Theorem~\ref{Th:asspeed}]
We only have to show that the spreading set $\W=\mc{C}(\mathcal{U}(U))+\W_0$ satisfies
$$\W=\big\{r\xi:\xi\in\mathbb{S}^{N-1},\, 0\le r<\omega(\xi)\big\},$$
with $\omega(\xi)$ given by \eqref{ASS-G}. To do so, we show this equality ray by ray.

For $\xi \in \mathcal{U}(U)$, since $0\in\W_0$, one obviously has that $r\xi\in \W$ for any $0\le r<+\infty=\omega(\xi)$.

Assume in the sequel that $\xi\in\Sph\!\setminus\!\mc{U}(U)$, that is, $\xi\in\mc{B}(U)$ thanks to~\eqref{BUS}. Take any $r\in[0,\omega(\xi))$. From the definition of $\omega(\xi)$, there is then $z\in\mc{C}(\mc{U}(U))$ such that
$$r<\frac{\omega_0(\hat{\xi-z})}{|\xi-z|}.$$
In particular, $|\xi-z|>0$ and $r\xi=rz+r(\xi-z)$ with $rz\in\mc{C}(\mc{U}(U))$ and $r\,|\xi-z|<\omega_0(\hat{\xi-z})$. This implies that $r\xi\in\mc{C}(\mathcal{U}(U))+\W_0=\W$.

Conversely, take any $r\ge\omega(\xi)$, whence $r\ge\omega_0(\xi)>0$. For any $z\in\mc{C}(\mc{U}(U))$, one has $z/r\in\mc{C}(\mc{U}(U))$, $\xi-z/r\neq0$ (since $\xi\in\Sph\!\setminus\!\mc{U}(U)$), and
$$|r\xi-z|=r\,\big|\xi-z/r\big|\ge\omega_0\big(\hat{\xi-z/r}\big)=\omega_0\big(\hat{r\xi-z}\big).$$
Hence, by~\eqref{W0}, $r\xi-z\not\in\W_0$ for every $z\in\mc{C}(\mc{U}(U))$, that is, $r\xi\not\in \mc{C}(\mc{U}(U))+\W_0=\W$. The proof of Theorem~\ref{Th:asspeed} is thereby complete.
\end{proof}

%%%%%%%%%%%%%%%%%%%%%%%%%%%%%%%%%%%%%%%%%%%%%%%%%%%

\subsection{$V$ and $\Lambda$-shaped initial data: proof of Corollary~\ref{theo:ass}}\label{subsection5.2}

Corollary~\ref{theo:ass} is a consequence of Theorem~\ref{Th:Ass} for V-shaped or $\Lambda$-shaped initial supports~$U$. We divide its proof into these two different cases.

\begin{proof}[Proof of Corollary~\ref{theo:ass} when $U$ is V-shaped]
We first assume that~\eqref{BUS} holds and that $U$ is V-shaped. Hence, $\mc{B}(U)=\Sph\setminus\mc{U}(U)$, $\emptyset\neq\mc{B}(U)\subsetneqq\Sph$, $\mc{C}(\mc{B}(U))$ is convex, and $\emptyset\neq\mc{L}(\mc{B}(U))\subsetneqq\Sph$. But we do not assume~\eqref{supt-plane} for the moment (this assumption will only be used at the end of the proof). 

Let us first prove in this paragraph that
\be\label{EqF-V}
\underbrace{\Big\{x\in\R^N:\, \inf_{e\in\mathcal{L}(\mathcal{B}(U))}(x\cdot e -c^*(e))< 0\Big\}}_{=:\W_V}=\underbrace{\big\{r\xi: \xi\in\mathbb{S}^{N-1},\, 0\le r<\omega_V(\xi)\big\}}_{=:\W_1},
\ee
with $\omega_V(\xi)$ given by \eqref{ASS-V}. We show the equality between these two sets ray by ray. Firstly, for any $\xi\in\mathcal{U}(U)$ and any $r\ge0$, there holds that $\inf_{e\in\mathcal{L}(\mathcal{B}(U))} \{r\xi\cdot e\}\le 0$ (otherwise $r\xi\in{\rm{int}}(\mc{C}(\overline{\mc{B}(U)}))={\rm{int}}(\overline{\mc{C}(\mc{B}(U))})$ by Lemma~\ref{lemma:P}, and then $r>0$ and $r\xi\in\mc{C}(\mc{B}(U))$ by convexity of $\mc{C}(\mc{B}(U))$, and finally $\xi\in\mc{B}(U)$, a contradiction). It then follows from the compactness of $\mc{L}(\mc{B}(U))$ and the positivity of the speeds $c^*(e)$'s that 
$$\inf_{e\in\mathcal{L}(\mathcal{B}(U))} \{r\xi\cdot e -c^*(e)\}<0.$$
Thus,
$$r\xi\in\W_V\cap\W_1\ \hbox{ for all $\xi\in\mc{U}(U)$ and $0\le r<\omega_V(\xi)=+\infty$}.$$
Secondly, remember that, from Lemma~\ref{lemma:P} and the relative openness of $\mc{B}(U)$ and closedness of $\mc{L}(\mc{B}(U))$ in $\Sph$, one has $\inf_{e\in\mc{L}(\mc{B}(U))}\{e\cdot\xi\}>0$ for all $\xi\in\mc{B}(U)$. Therefore, for any $\xi\in\mc{B}(U)$ and any $r\in [0,\omega_V(\xi))$, there is $e\in\mathcal{L}(\mathcal{B}(U))$ (hence, $e\cdot \xi>0$) such that
$$r<\frac{c^*(e)}{e\cdot \xi},$$
that is, $r\xi\cdot e -c^*(e)<0$ and then $r\xi\in\W_V$. On the other hand, for any $\xi\in \mathcal{B}(U)$ and $r\in [\omega_V(\xi),+\infty)$, it follows that $e\cdot\xi>0$ and $r\ge c^*(e)/(e\cdot \xi)$ for all $e\in\mathcal{L}(\mathcal{B}(U))$, whence $\inf_{e\in\mathcal{L}(\mathcal{B}(U))}\{r\xi\cdot e -c^*(e)\}\ge 0$ and $r\xi\not\in\W_V$. This completes the proof of~\eqref{EqF-V}.

Next, we show in this paragraph that
\be\label{hatW}
\mc{C}(\mathcal{U}(U))+\W_0=\underbrace{\Big\{x\in\R^N:\, \inf_{e\in\mathcal{L}(\mathcal{B}(U))}(x\cdot e-\hat{c}(e))< 0\Big\}}_{=:\W_2},
\ee
where
\be\label{defhatc}
\hat{c}(e):=\sup_{\xi\in\mathbb{S}^{N-1}} \omega_0(\xi)\xi\cdot e.
\ee
Take first any $x\in\W_2$, and set $\e:=-\inf_{e\in\mathcal{L}(\mathcal{B}(U))}(x\cdot e-\hat{c}(e))>0$. Then, choosing $e\in\mathcal{L}(\mathcal{B}(U))$ such that $x\cdot e-\hat{c}(e)<-\e/2$, it follows from~\eqref{defhatc} that there are $\xi\in\mathbb{S}^{N-1}$ and $\delta\in(0,\omega_0(\xi))$ small enough such that $(\omega_0(\xi)-\delta)\xi\cdot e>\hat{c}(e)-\e/2$. Thus,
$$(x-(\omega_0(\xi)-\delta)\xi)\cdot e=x\cdot e -(\omega_0(\xi)-\delta)\xi\cdot e<x\cdot e -\hat{c}(e)+\frac{\e}{2}<0.$$
Lemma~\ref{lemma:P} implies that $y:=x-(\omega_0(\xi)-\delta)\xi\not\in\mc{C}(\overline{\mc{B}(U)})$, whence $y\in\mc{C}(\mathcal{U}(U))$ and $x=y+(\omega_0(\xi)-\delta)\xi\in\mc{C}(\mathcal{U}(U))+\W_0$. Therefore,
\be\label{subset2}
\W_2\subset\mc{C}(\mathcal{U}(U))+\W_0.
\ee
Take now any $x\in\R^N\setminus\W_2$, that is,
$$\inf_{e\in\mathcal{L}(\mathcal{B}(U))}(x\cdot e-\hat{c}(e))\ge 0,$$
and take any $y\in\mc{C}(\mathcal{U}(U))$. One has in particular $\inf_{e\in\mathcal{L}(\mathcal{B}(U))}x\cdot e>0$ (since $\hat{c}(e)\ge\omega_0(e)$ and the $\omega_0(e)$'s are bounded from below by a positive constant), whence $x\neq0$ and $x\in{\rm{int}}(\mc{C}(\overline{\mc{B}(U)}))={\rm{int}}(\overline{\mc{C}(\mc{B}(U))})$ by Lemma~\ref{lemma:P}, and then $x\in\mc{C}(\mc{B}(U))\setminus\{0\}$ by convexity of $\mc{C}(\mc{B}(U))$. As a consequence, $x\neq y$. Call then $\xi:=\hat{x-y}=(x-y)/|x-y|$. One has $x=y +|x-y|\xi$ and, for every $e\in\mathcal{L}(\mathcal{B}(U))$,
\be\label{out-hatW}
0\le x\cdot e-\hat{c}(e)=y\cdot e +|x-y|\xi\cdot e-\hat{c}(e).
\ee
Since $y\in\mc{C}(\mathcal{U}(U))=(\R^N\setminus\mc{C}(\mathcal{B}(U)))\cup\{0\}$, Lemma~\ref{lemma:P} yields the existence of $e\in\mathcal{L}(\mathcal{B}(U))$ such that $y\cdot e\le0$. Since $\hat{c}(e)>0$, it follows from~\eqref{out-hatW} that $\xi\cdot e >0$, whence
$$|x-y|\ge\frac{\hat{c}(e)}{\xi\cdot e}\ge \omega_0(\xi)=\omega_0(\hat{x-y}).$$
Therefore, $x-y\not\in\W_0$, and thus $x\not\in\mc{C}(\mathcal{U}(U))+\W_0$. One has then shown $\R^N\setminus\W_2\subset\R^N\setminus(\mc{C}(\mathcal{U}(U))+\W_0)$. Together with~\eqref{subset2}, this completes the proof of~\eqref{hatW}.

Furthermore, as in the proof of~\eqref{EqF-V}, it follows from~\eqref{hatW} that
\be\label{hatW2}
\mc{C}(\mathcal{U}(U))+\W_0=\big\{r\xi: \xi\in\mathbb{S}^{N-1},\, 0\le r<\hat{\omega}(\xi)\big\},
\ee
where~$\hat{\omega}(\xi)$ is defined as in~\eqref{hatomega}.

Let us finally complete the proof of Corollary~\ref{theo:ass}-(i), by assuming now that~\eqref{supt-plane} holds as well. On the one hand,~\eqref{w0} implies that $\hat{c}(e)=\sup_{\xi\in\mathbb{S}^{N-1}} \omega_0(\xi)\xi\cdot e\le c^*(e)$ for every $e\in\Sph$. On the other hand, condition \eqref{supt-plane} implies that, for every $e\in\mathbb{S}^{N-1}$, there is $\xi\in\mathbb{S}^{N-1}$ such that $\omega_0(\xi)\xi\cdot e=c^*(e)$, whence $\hat{c}(e)\ge c^*(e)$. Thus, $\hat{c}(e)=c^*(e)$ for all $e\in\Sph$, and it follows from~\eqref{EqF-V} and~\eqref{hatW} that $\W_V=\mc{C}(\mathcal{U}(U))+\W_0$. By Theorem~\ref{Th:Ass}, the set $\W_V$ is then the spreading set, and the proof of Corollary~\ref{theo:ass} when $U$ is V-shaped is thereby complete.
\end{proof}

\begin{proof}[Proof of Corollary~\ref{theo:ass} when $U$ is $\Lambda$-shaped]
We first assume that~\eqref{BUS} holds and that~$U$ is $\Lambda$-shaped. Hence, $\mc{B}(U)=\Sph\setminus\mc{U}(U)$, $\emptyset\neq\mc{U}(U)\subsetneqq\Sph$, $\mc{C}(\mc{U}(U))$ is convex, and $\emptyset\neq\mc{L}(\mc{U}(U))\subsetneqq\Sph$. But we do not assume~\eqref{supt-plane} for the moment (this assumption will only be used at the end of the proof). 

We first prove in this paragraph that
\be\label{EqF-L}
\underbrace{\Big\{x\in\R^N:\, \displaystyle\inf_{e\in\mathcal{L}(\mathcal{U}(U))}(x\cdot e+ c^*(-e))> 0\Big\}}_{=:\W_\Lambda}=\underbrace{\big\{r\xi:\xi\in\mathbb{S}^{N-1},\, 0\le r<\omega_{\Lambda}(\xi)\big\}}_{=:\W_3},
\ee
where $\omega_{\Lambda}(\xi)$ is given by \eqref{omega-Lambda}. We show the equality between these two sets ray by ray. Firstly, for any $\xi\in\mathcal{U}(U)$ and any $r\ge0$, Lemma~\ref{lemma:P} implies that $\inf_{e\in\mathcal{L}(\mathcal{U}(U))}\{r\xi\cdot e\}\ge 0$, and it follows from the uniform positivity of the speeds $c^*(-e)$'s that 
$$\inf_{e\in\mathcal{L}(\mathcal{U}(U))} \{r\xi\cdot e +c^*(-e)\}>0.$$
Thus,
$$r\xi\in\W_\Lambda\cap\W_3\ \hbox{ for all $\xi\in\mc{U}(U)$ and $0\le r<\omega_\Lambda(\xi)=+\infty$}.$$
Take now any $\xi\in\mathcal{B}(U)$ and any $r\in [0,\omega_{\Lambda}(\xi))$. Then $\xi\not\in\mc{C}(\mc{U}(U))=\mc{C}(\overline{\mc{U}(U)})$ and Lemma~\ref{lemma:P} yields the existence of $e\in\mathcal{L}(\mathcal{U}(U))$ such that $e\cdot \xi<0$. For any such $e$, one then has by~\eqref{omega-Lambda} that
$$r<\frac{c^*(-e)}{-e\cdot\xi},$$
that is, $r\xi\cdot e+c^*(-e)>0$. Moreover, for any $e'\in\mathcal{L}(\mathcal{U}(U))$ with $e'\cdot\xi\ge0$, one also has $r\xi\cdot e'\!+\!c^*(-e')\!\ge\!c^*(-e')\!>\!0$. From the compactness of $\mathcal{L}(\mathcal{U}(U))$ and the continuity of the map $e\mapsto c^*(-e)$ in $\Sph$, one gets that $r\xi\in\W_\Lambda$. On the other hand, for any $\xi\in\mc{B}(U)$, any $r\in [\omega_{\Lambda}(\xi),+\infty)$, and any $\e>0$, there is $e\in \mathcal{L}(\mathcal{U}(U))$ such that~$e\cdot \xi<0$~and
$$r\ge\frac{c^*(-e)}{-e\cdot \xi}-\e,$$
whence $r\xi\cdot e +c^*(-e)\le-\e(e\cdot\xi)\le\e$. Since $\e>0$ can be arbitrarily small, this means that $\inf_{e\in\mathcal{L}(\mathcal{U}(U))}\{r\xi\cdot e +c^*(-e)\}\le 0$ for any $\xi\in\mc{B}(U)$ and any $r\in [\omega_{\Lambda}(\xi),+\infty)$, whence~$r\xi\not\in\W_\Lambda$. This completes the proof of~\eqref{EqF-L}.

Next, we show in this paragraph that
\be\label{hatW-L}
\mc{C}(\mathcal{U}(U))+\W_0=\underbrace{\Big\{x\in\R^N:\, \inf_{e\in\mathcal{L}(\mathcal{U}(U))}(x\cdot e+\hat{c}(-e))> 0\Big\}}_{=:\W_4},
\ee
where we recall from~\eqref{defhatc} that $\hat{c}(e)=\sup_{\xi\in\mathbb{S}^{N-1}} \omega_0(\xi)\xi\cdot e\ge\omega_0(e)>0$. Take first any $x\in\R^N\setminus\W_4$, that is,
\be\label{infhatc}
\inf_{e\in\mathcal{L}(\mathcal{U}(U))}(x\cdot e+\hat{c}(-e))\le 0,
\ee
and take any $y\in\mc{C}(\mathcal{U}(U))$. Lemma~\ref{lemma:P} implies that $y\cdot e\ge0$ for all $e\in\mathcal{L}(\mathcal{U}(U))$ and, together with $\hat{c}(-e)\ge\omega_0(-e)$ and the uniform positivity of the $\omega_0(-e)$'s, one gets that $\inf_{e\in\mathcal{L}(\mathcal{U}(U))}\{y\cdot e+\hat{c}(-e)\}>0$. Therefore, $x\neq y$. Call $\xi:=\hat{x-y}=(x-y)/|x-y|$ and consider any $\eta\in(0,1)$. Then, $x=y +|x-y|\xi$ and, since
$$\inf_{e\in\mathcal{L}(\mathcal{U}(U))}(x\cdot e+\hat{c}(-e)-\eta\,\hat{c}(-e))<0$$
from~\eqref{infhatc} and the uniform positivity of the $\hat{c}(-e)$'s, there is $e\in\mathcal{L}(\mathcal{U}(U))$ such that
$$\eta\,\hat{c}(-e)\ge x\cdot e+\hat{c}(-e)=y\cdot e +|x-y|\xi\cdot e+\hat{c}(-e),$$
whence 
$$0\ge y\cdot e +|x-y|\xi\cdot e+(1-\eta)\,\hat{c}(-e).$$
Since $y\in\mc{C}(\mathcal{U}(U))$ and $(1-\eta)\,\hat{c}(-e)>0$, one has $y\cdot e\ge 0$ and then $\xi\cdot e <0$, whence
$$|x-y|\ge\frac{-(1-\eta)\,\hat{c}(-e)}{\xi\cdot e}\ge(1-\eta)\,\omega_0(\xi).$$
Since this is true for any $\eta\in(0,1)$, one infers that $|x-y|\ge\omega_0(\xi)$, i.e. $x-y\not\in\W_0$. Therefore, $x\not\in\mc{C}(\mathcal{U}(U))+\W_0$. One has then shown that $\R^N\setminus\W_4\subset\R^N\setminus(\mc{C}(\mathcal{U}(U))+\W_0)$, that is,
\be\label{subset1}
\mc{C}(\mathcal{U}(U))+\W_0\subset\W_4.
\ee
Take now any $x\in\R^N\setminus\overline{\mc{C}(\mathcal{U}(U))+\W_0}$, let $y$ be the projection of $x$ onto the closed convex set $\overline{\mc{C}(\mathcal{U}(U))+\W_0}$, and call
$$e:=\hat{y-x}=\frac{y-x}{|y-x|}.$$
One has that $e\cdot z\ge e\cdot y$ for all $z\in\overline{\mc{C}(\mathcal{U}(U))+\W_0}$. In particular, for any $\xi\in\mc{U}(U)$, since $0\in\W_0$, one has $e\cdot(t\xi)\ge e\cdot y$ for all $t\ge0$, whence $e\cdot\xi\ge0$. Therefore, $e\in\mc{L}(\mc{U}(U))$. Since $0\in\mc{C}(\mathcal{U}(U))$, one also gets that $\overline{\W_0}\subset\overline{\mc{C}(\mathcal{U}(U))+\W_0}$ and then $e\cdot(\omega_0(\xi)\xi)\ge e\cdot y$ for all $\xi\in\Sph$, whence $\hat{c}(-e)+e\cdot y\le0$. As a consequence,
$$\hat{c}(-e)+e\cdot x\le e\cdot (x-y)=-|x-y|<0$$
and $x\not\in\W_4$. One has then shown that $\R^N\setminus\overline{\mc{C}(\mathcal{U}(U))+\W_0}\subset\R^N\setminus\W_4$, that is, $\W_4\subset\overline{\mc{C}(\mathcal{U}(U))+\W_0}$. Since $\W_4$ is open and $\mc{C}(\mathcal{U}(U))+\W_0$ is open (because $\W_0$ is open) and convex (as the sum of two convex sets), one gets that $\W_4\subset{\rm{int}}(\overline{\mc{C}(\mathcal{U}(U))+\W_0})=\mc{C}(\mathcal{U}(U))+\W_0$. Together with~\eqref{subset1}, this completes the proof of~\eqref{hatW-L}.

Finally, assuming condition~\eqref{supt-plane}, one has $\hat{c}(e)=c^*(e)$ for all $e\in\mathbb{S}^{N-1}$, and it follows from \eqref{EqF-L} and \eqref{hatW-L} that $\W_{\Lambda}=\mc{C}(\mathcal{U}(U))+\W_0$. By Theorem~\ref{Th:Ass}, the set $\W_{\Lambda}$ is then the spreading set. The proof of Corollary~\ref{theo:ass} is thereby complete.  
\end{proof}

%-----------------------------------------------------------------------------------
%-----------------------------------------------------------------------------------

%-------------------------------------------------------------------------------
%-------------------------------------------------------------------------------

\end{document}